\theoremstyle{plain}
\renewcommand{\theequation}{\arabic{section}.\arabic{equation}}
\renewcommand\thefigure{\thesection.\@arabic\c@figure}
\renewcommand{\thefigure}{\arabic{section}.\arabic{figure}}
\newtheorem{thm}{\bf Theorem}
\newtheorem{cor}{\bf Corollary}
\newtheorem{lmm}{\bf Lemma}
\newenvironment{lemma}{\begin{lmm}}{\end{lmm}}
\theoremstyle{remark}
\newtheorem{rem}{\bf Remark}[section]
\theoremstyle{definition}
\newtheorem{defn}{\bf Definition}[section]
\numberwithin{table}{section}
\def \af {\alpha}
\def \bt {\beta}
\renewcommand \wedge \times
\def \dlx {D_{-}^{s}}
\def \drx {D_{+}^{s}}
\begin{document}
\bibliographystyle{plain}

\title[GJFs and Their Approximations to FDEs] {Generalized Jacobi Functions and Their Applications to Fractional Differential Equations}
\author{Sheng Chen${}^1$,\;\;  Jie Shen${}^{2,1}$\;\;  and\;\;   Li-Lian Wang${}^3$}

\thanks{${}^1$School of Mathematical Sciences, Xiamen University, Xiamen, Fujian 361005, P. R. China.}
\thanks{${}^2$Department of Mathematics, Purdue University, West Lafayette,
  IN 47907-1957, USA. J.S. is  partially  supported by NSF grant
        DMS-1217066 and AFOSR grant FA9550-11-1-0328.}
 \thanks{${}^3$Division of Mathematical Sciences, School of Physical
and Mathematical Sciences, Nanyang Technological University,
637371, Singapore. The research of this author is partially supported by Singapore MOE AcRF Tier 1 Grant (RG 15/12), Singapore MOE AcRF Tier 2 Grant (MOE 2013-T2-1-095, ARC 44/13) and Singapore A$^\ast$STAR-SERC-PSF Grant (122-PSF-007).}

\begin{abstract} In this paper,  we consider spectral approximation of fractional differential equations (FDEs). A main ingredient of our approach is to define a new class of generalized Jacobi functions (GJFs),  which is intrinsically related to fractional calculus, and  can serve as natural basis functions  for properly designed
spectral methods for FDEs.  We establish spectral approximation results for these GJFs in weighted Sobolev spaces involving fractional derivatives.
We construct efficient GJF-Petrov-Galerkin methods for a class of prototypical fractional initial value problems (FIVPs) and fractional boundary value problems  (FBVPs)  of general order, and show that with an appropriate choice of the parameters in GJFs, the resulted linear systems can be sparse and well-conditioned.   Moreover,
 we derive  error estimates with convergence rate  only depending on the smoothness of data, so truly spectral accuracy can be attained if the data are smooth enough.
 The idea  and results presented in this paper will be useful to deal with more general FDEs associated with Riemann-Liouville or Caputo fractional derivatives.
\end{abstract}
\keywords{Fractional differential equations, singularity, Jacobi polynomials with real parameters, generalised Jacobi functions,  weighted Sobolev spaces, approximation results, spectral accuracy}
 \subjclass[2000]{65N35, 65E05, 65M70,  41A05, 41A10, 41A25}

 \maketitle
\section{Introduction}
Fractional  differential equations  appear in the investigation of transport dynamics in complex systems which are governed by the anomalous diffusion and non-exponential relaxation patterns. Related equations of importance are the space/time fractional diffusion equations, the fractional advection-diffusion equations  for anomalous diffusion with sources and sinks,  the fractional FokkerÐPlanck equations  for anomalous diffusion in an external field, and among others.
Progress in the last two decades has demonstrated that  many phenomena in various fields of science, mathematics, engineering, bioengineering, and economics are more accurately described by involving  fractional derivatives. Nowadays,   FDEs are emerging as a new powerful tool for modeling many difficult type of complex systems, i.e., systems with overlapping microscopic and macroscopic scales or systems with long-range time memory and long-range spatial interactions (see, e.g.,  \cite{Pod99,Met.K00,Kil.S06,Diet10,DGLZ13b} and the references therein).

There has been a growing interest in the last decades in developing numerical methods for solving FDEs, and a large volume  of literature  is available on this subject.
Generally speaking, two  main difficulties for dealing with FDEs are
\begin{itemize}
 \item[(i)]  fractional derivatives are  non-local operators;
 \item[(ii)] fractional derivatives involve singular kernel/weight functions,  and the solutions of FDEs are usually singular near the boundaries.
 \end{itemize}

 Most of the existing numerical methods for FDEs are based on finite difference/finite element methods (cf. \cite{Mee.T04,LAT04,Sun.W06,MST06,EHR07,Erv.R07,Tad.M07,JLZ13,ZLLT13} and the references therein) which lack the capability to effectively deal with the aforementioned  difficulties,  as they are based on ``local" operations, and are  not well-suited  for problems with singular kernels/weights.
 In particular,
 due to the non-local nature of the fractional derivatives, they all lead to full and dense matrices which are expensive to calculate and to invert.
 Recently, some interesting ideas  have been
proposed to overcome these difficulties.  For instance, Wang and  Basu \cite{Wan.B12}  proposed a fast finite-difference method by carefully analyzing the structure of the  coefficient matrices of the resulted linear systems, and delicately decomposing them into a combination of sparse and structured dense matrices.

There exist also  limited but very promising efforts in developing spectral methods for solving FDEs  (see, e.g.,  \cite{Li.X09,Li.X10,LZL12,zayernouri2013fractional,Zayernouri.K14}).
The  spectral method  appears to be  a natural approach,  since it is  global,  which should be  better suited for non-local problems.
Most notably,
 Zayernouri and Karniadakis \cite{zayernouri2013fractional} proposed to use polyfractomials  as basis functions,  which are  eigenfunctions of a fractional Sturm-Liouville operator,  and   result in  sparse matrices for some simple model equations.  Preliminary results in \cite{zayernouri2013fractional} showed that this new approach could lead to several orders of magnitude saving in CPU and memory for some  model FDEs.  However, there is no error analysis available for the
 approximation properties  of  polyfractomials, and the algorithms therein 
 do not necessarily lead to spectral convergence for problems with smooth data  but non-smooth solution which is typical for FDEs.

 The second difficulty is largely  ignored in the literature.  Typically,  the solution and data of a FDE are not in the same type of Sobolev spaces,  which is in distinctive contrast with  usual DEs.  Consequently, they should be approximated by different tools, and the error estimates  should be measured in norms of different types of spaces.
 Indeed, given smooth data, the solution of a FDE only has limited regularity  in the usual Sobolev spaces.
 However, existing error estimates for FDEs, either finite differences, finite elements or spectral methods,  are all based on the usual approach, namely,  the errors are performed in the framework of  usual Sobolev spaces.  Hence, it is not surprising to see  that
 most existing methods and the related error estimates only lead to  poor convergence rate  for
 typical FDEs, unless one   manufactures a smooth exact solution,  directly uses a polynomial-based method, and then carefully deals with the singular data.
%

 The  purpose of this paper is to  develop and analyze efficient spectral methods which can effectively address the above two issues for a class of prototypical FDEs.
 The main strategies and contributions are highlighted as follows.
 \begin{itemize}
 \item  We introduce a new class of GJFs with two parameters, which    can be tuned to match singularity of the underlying solution, and simultaneously produce sparse
 linear systems.  More importantly, such GJFs enjoy attractive  fractional calculus properties and remarkable approximability to functions with singular behaviour at boundaries.
 \item We derive optimal approximation results for these GJFs in suitably weighted spaces involving fractional derivatives, and  obtain error estimates for the proposed
 GJF-Petrov-Galerkin approaches  with convergence rate only depending on smoothness of the data (characterised by  usual Sobolev norms).  Thus, truly spectral accuracy can be achieved for some model FDEs with sufficient smooth data.
 \item We point out  that the  GJFs,  including generalised Jacobi polynomials (GJPs) as special cases,  have been first   introduced in  \cite{Guo.SW06,Guo.SW09} for solutions of usual BVPs. Here,  we  modify the original definition, especially  the range of the parameters,  which  opens up new applications in solving  FDEs.
 We also remark that GJFs with parameters in  $(0,1)$  have direct bearing on the Jacobi  polyfractomials
 in \cite{zayernouri2013fractional}.  The major difference from  these relevant existing ones lies in that
 the new GJFs are built upon   Jacobi polynomials {\em with real parameters.}    This is essential for both algorithm development and error   analysis.
 \end{itemize}
 While we shall only consider some prototypical FIVPs and FBVPs of general order,  we  position  this work as the first but important step towards developing efficient spectral  methods for more complicated FDEs involving Riemann-Liouville or Caputo fractional derivatives.


The paper is organized as follows. In the next section, we make necessary preparations by recalling  basic properties of Jacobi polynomials with real parameters, and introducing the important Bateman fractional integral formula. In Section \ref{sect:GJFs}, we define the GJFs and derive their essential  properties, particularly, including  fractional calculus properties.  In Section \ref{sect:appr}, we establish the approximation results for  these GJFs. In Section \ref{sect:appl}, we construct efficient GJF-Petrov-Galerkin methods for a class of
prototypical FDEs, conduct error analysis and present ample supporting numerical results.
In the final section, we extend the most important Riemann-Liouville fractional derivative formulas to the Caputo fractional derivatives, and conclude the paper with a few remarks.

\section{Preliminaries}

In this section, we review   basics  of fractional integrals/derivatives,
and recall   relevant properties of the Jacobi polynomials with real parameters.
In particular, we introduce the  Bateman fractional integral formula, which plays a very  important role
in the forthcoming algorithm development and analysis.

 \subsection{Fractional integrals and derivatives} Let $\mathbb N$ and $\mathbb R$ be  the set of positive integers and real numbers, respectively. Denote
 \begin{equation}\label{mathNR}
{\mathbb N}_0:=\{0\}\cup {\mathbb N}, \quad {\mathbb R}^+:= \{a\in {\mathbb R}: a> 0\}, \quad {\mathbb R}^+_0:=\{0\}\cup {\mathbb R}^+.
\end{equation}
We first recall the definitions of the fractional integrals and fractional derivatives in the sense of Riemann-Liouville and Caputo (see,  e.g., \cite{Pod99,Diet10}). To fix the idea, we restrict our attentions to the interval $(-1,1)$.
It is clear that all formulas and properties  can be  formulated on  a general interval $(a,b)$.
\begin{defn}[{\bf Fractional integrals and derivatives}]\label{RLFDdefn}
{\em For  $\rho\in {\mathbb R}^+,$  the left and right fractional integrals are respectively defined  as
 \begin{equation}\label{leftintRL}
 \begin{split}
   &I_{-}^\rho v(x)=\frac 1 {\Gamma(\rho)}\int_{-1}^x \frac{v(y)}{(x-y)^{1-\rho}} dy,\; x>-1;\;\;   I_{+}^\rho v(x)=\frac {1}  {\Gamma(\rho)}\int_{x}^1 \frac{v(y)}{(y-x)^{1-\rho}} dy,\; x<1,
   \end{split}
\end{equation}
where $\Gamma(\cdot)$ is the usual Gamma function.

For  $s\in [k-1, k)$ with $k\in {\mathbb N},$  the left-sided  Riemann-Liouville fractional derivative {\rm(LRLFD)} of order $s$ is defined by
\begin{equation}\label{leftRLdefn}
   \dlx v(x)=\frac 1{\Gamma(k-s)}\frac{d^k}{dx^k}\int_{-1}^x \frac{v(y)}{(x-y)^{s-k+1}} dy,\;\;\;  x\in \Lambda:=(-1,1),
\end{equation}
and the right-sided Riemann-Liouville fractional derivative  {\rm(RRLFD)} of order $s$ is defined  by
\begin{equation}\label{rightRLdefn}
   \drx v(x)=\frac {(-1)^k}{\Gamma(k-s)}\frac{d^k}{dx^k}\int_{x}^1 \frac{v(y)}{(y-x)^{s-k+1}} dy, \;\;\;  x\in \Lambda.
\end{equation}

For  $s\in [k-1, k)$ with $k\in {\mathbb N},$  the left-sided Caputo fractional derivatives {\rm(LCFD)}  of order $s$ is  defined by
\begin{equation}\label{left-fra-der-c}
    {^C}\hspace*{-2pt}D_-^sv(x):=\frac{1}{\Gamma(k-s)}\int_{-1}^x\frac{v^{(k)}(y)}{(x-y)^{s-k+1}}dy, \;\;\;  x\in \Lambda,
\end{equation}
and the right-sided Caputo fractional derivatives {\rm(RCFD)}  of order $s$ is  defined by
\begin{equation}\label{right-fra-der-c}
    {^C}\hspace*{-2pt}D_+^sv(x):=\frac{(-1)^{k}}{\Gamma(k-s)}\int_{x}^1\frac{v^{(k)}(y)}{(y-x)^{s-k+1}}dy, \;\;\;  x\in \Lambda.
\end{equation}
}
\end{defn}
 It is clear that  for any   $k\in {\mathbb N}_0,$
\begin{equation}\label{2rela}
D_-^{k}=D^{k},\quad D_+^{k}=(-1)^{k} D^{k},\quad {\rm where}\;\;\;  D^k:={d^k}/{dx^k}.
\end{equation}
Thus,  we can define the RLFD as
\begin{equation}\label{ImportRela}
   \dlx v(x)=D^k\, I_{-}^{k-s} v(x), \quad    \drx v(x)=(-1)^k D^k \, I_{+}^{k-s} v(x). 
\end{equation}
According to \cite[Thm. 2.14]{Diet10},  we have that for any absolutely integrable function $v,$ and real $s\ge 0,$
\begin{equation}\label{rulesa}
   D_\pm^s\, I_\pm^s v(x) = v(x),\quad \text{a.e. in} \;\; \Lambda.
 \end{equation}
%

 The following lemma shows the relationship between the Riemann-Liouville  and Caputo fractional derivatives (see,  e.g., \cite[Ch. 2]{Pod99}).
\begin{lmm}\label{relation-RL-Caputo}
For  $s\in [k-1, k)$ with $k\in {\mathbb N},$  we have  
\begin{subequations}\label{relaCaputo}
\begin{gather}
  D_-^sv(x) ={}^C\hspace*{-2pt}D_-^s v(x)+ \sum_{j=0}^{k-1} \frac{v^{(j)}(-1)}{\Gamma(1+j-s)}(1+x)^{j-s};
  \label{left-relation-RL-Caputo} \\
  D_+^s v(x)={^C}\hspace*{-2pt}D_+^s v(x) + \sum_{j=0}^{k-1} \frac{(-1)^{j} v^{(j)}(1)}{\Gamma(1+j-s)}(1-x)^{j-s}. \label{right-relation-RL-Caputo}
\end{gather}
\end{subequations}
\end{lmm}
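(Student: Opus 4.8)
The plan is to reduce both sides to fractional integrals of $v$ and $v^{(k)}$ by means of the operational identities in \re{ImportRela} and \re{left-fra-der-c}, and then to pass between them using Taylor's formula. Throughout set $\alpha:=k-s\in(0,1]$, so that $\dlx v=D^kI_-^{\alpha}v$, while, since $(x-y)^{-(s-k+1)}=(x-y)^{\alpha-1}$ in \re{left-fra-der-c}, one has ${}^C\hspace*{-2pt}D_-^s v=I_-^{\alpha}v^{(k)}$. I shall establish the left-sided identity \re{left-relation-RL-Caputo}; the right-sided identity \re{right-relation-RL-Caputo} then follows verbatim after expanding about the endpoint $x=1$ and tracking the factors $(-1)^k$ and $(-1)^j$ that enter through \re{2rela} and \re{rightRLdefn}.

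First I would expand $v$ by Taylor's theorem with integral remainder about $x=-1$, recognising (because $\Gamma(k)=(k-1)!$) that the remainder is exactly $I_-^{k}v^{(k)}$:
\[
 v(y)=\sum_{j=0}^{k-1}\frac{v^{(j)}(-1)}{j!}(y+1)^j+I_-^{k}v^{(k)}(y).
\]
Applying $I_-^{\alpha}$, using the Beta-integral evaluation (via the substitution $y=-1+(x+1)t$)
\[
 I_-^{\alpha}\big[(\cdot+1)^j\big](x)=\frac{(x+1)^{\alpha+j}}{\Gamma(\alpha)}\,B(j+1,\alpha)=\frac{j!}{\Gamma(\alpha+j+1)}\,(x+1)^{\alpha+j},
\]
and the semigroup property $I_-^{\alpha}I_-^{k}=I_-^{\alpha+k}$ (a standard property of the Riemann--Liouville integrals) on the remainder, I obtain the key intermediate identity
\[
 I_-^{\alpha}v(x)=\sum_{j=0}^{k-1}\frac{v^{(j)}(-1)}{\Gamma(\alpha+j+1)}\,(x+1)^{\alpha+j}+I_-^{\alpha+k}v^{(k)}(x).
\]

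The second step is to apply $D^k$ to this identity. For the power terms, $D^k(x+1)^{\alpha+j}=\frac{\Gamma(\alpha+j+1)}{\Gamma(\alpha+j+1-k)}(x+1)^{\alpha+j-k}$; since $\alpha=k-s$ the exponent equals $j-s$ and $\Gamma(\alpha+j+1-k)=\Gamma(1+j-s)$, so the factor $\Gamma(\alpha+j+1)$ cancels the denominator and the $j$-th term becomes precisely $\frac{v^{(j)}(-1)}{\Gamma(1+j-s)}(1+x)^{j-s}$, reproducing the boundary sum of \re{left-relation-RL-Caputo}. For the remainder I write $I_-^{\alpha+k}=I_-^{k}I_-^{\alpha}$ and use $D^kI_-^{k}=\mathrm{id}$ (which is \re{rulesa} at the integer order $k$, together with \re{2rela}), so that $D^kI_-^{\alpha+k}v^{(k)}=I_-^{\alpha}v^{(k)}={}^C\hspace*{-2pt}D_-^s v(x)$. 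Combining the two contributions with $\dlx v=D^kI_-^{\alpha}v$ yields \re{left-relation-RL-Caputo}.

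The step I expect to be the main obstacle is justifying that $D^k$ may be passed term by term through the intermediate identity, that is, the interchange of the $k$-fold differentiation with the fractional integrals, as well as the validity of the semigroup relation $I_-^{\alpha}I_-^{k}=I_-^{\alpha+k}$. Both require that $v$ be smooth enough, say $v\in C^{k-1}(\overline{\Lambda})$ with $v^{(k)}$ absolutely integrable, so that the remainder $I_-^{\alpha+k}v^{(k)}$ is $C^k$ and Fubini together with differentiation under the integral sign are legitimate. Under such hypotheses each singular power $(x+1)^{\alpha+j}$, having exponent $\alpha+j\ge\alpha>0$, is $k$ times differentiable on the open interval $\Lambda$, and no boundary contributions beyond those already exhibited are produced, so the formal manipulations above are rigorous.
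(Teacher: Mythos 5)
Your proof is correct. Note that the paper does not actually prove this lemma: it is stated with a citation to Podlubny's book, so there is no in-paper argument to compare against. Your derivation --- Taylor expansion with the integral remainder recognised as $I_-^{k}v^{(k)}$, followed by $I_-^{\alpha}$ via the Beta integral and then $D^k$ using the semigroup property and $D^kI_-^k=\mathrm{id}$ --- is the standard textbook route, and all the constants check out: the factor $\Gamma(\alpha+j+1)$ produced by $D^k(x+1)^{\alpha+j}$ cancels the one generated by the Beta integral, leaving exactly $1/\Gamma(1+j-s)$, and the sign bookkeeping for the right-sided version (the $(-1)^j$ from expanding about $x=1$ and the $(-1)^k$ pairs from \re{rightRLdefn} and \re{2rela}) is consistent with \re{right-relation-RL-Caputo}. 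Your argument also silently handles the endpoint case $s=k-1$ correctly: there $D^k(x+1)^{1+j}=0$ for $j\le k-2$, matching the convention $1/\Gamma(1+j-s)=0$ recorded in Remark \ref{Gammarem}. The regularity caveats you raise (absolute integrability of $v^{(k)}$ so that Fubini and differentiation under the integral are legitimate) are the right ones and are the same hypotheses under which the cited reference proves the identity.
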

\begin{rem}\label{Gammarem}  {In the above, the Gamma function with negative, non-integer  argument should be understood by  the Euler  reflection formula {\rm(}cf. \cite{Abr.I64}{\rm)}:
$$
\Gamma(1+j-s)=\frac{\pi}{\sin (\pi (1+j-s))}\frac{1}{\Gamma(s-j)},\;\;\;  s\in (k-1,k),\;\; 1\le j\le k-2.
$$
Note that if $s=k-1,$ then $\Gamma(1+j-s)=\infty$ for all $0\le j\le k-2,$ so the summations in the above reduce  to $v^{(k-1)}(\pm 1),$ respectively.
\qed }
\end{rem}

\begin{rem}\label{RLCa}
We observe immediately  from \eqref{relaCaputo} that  for  $s\in [k-1, k)$ with $k\in {\mathbb N},$
\begin{equation}\label{RL=Ca}
 D_\pm^s v(x)={}^C\hspace*{-2pt}D_\pm^s v(x), \quad\text{if}\;\; v^{(j)}(\pm 1)=0,\;\;  0\le j\le k-1.
\end{equation}
\end{rem}

The rule of factional integration by parts (see, e.g., \cite{FSLP}) will also be used subsequently.
\begin{lmm}\label{integration}  For  $s\in [k-1, k)$ with $k\in {\mathbb N},$  we have
\begin{subequations}\label{byparts}
\begin{gather}
\big(D_-^su, v\big)  =\big(u, {}^C\hspace*{-2pt}D_+^s v\big) + \sum_{j=0}^{k-1} (-1)^{j} {v^{(j)}(x)}D^{k-j-1} I_-^{k-s} u (x)\Big|_{x=-1}^{x=1};
  \label{left-byparts} \\
\big(D_+^su, v\big)  =\big(u, {}^C\hspace*{-2pt}D_-^s v\big) + \sum_{j=0}^{k-1} (-1)^{k-j} {v^{(j)}(x)}D^{k-j-1} I_+^{k-s} u (x)\Big|_{x=-1}^{x=1},
  \label{right-byparts}
\end{gather}
\end{subequations}
where  $(\cdot,\cdot)$ is the   $L^2$-inner product.
\end{lmm}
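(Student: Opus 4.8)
The plan is to reduce both identities to the elementary integration-by-parts formula for the integer-order derivative $D^k$, followed by an application of Fubini's theorem to recognise the surviving integral as a Caputo derivative. I will focus on the first identity \eqref{left-byparts}; the second follows by the same argument with the two endpoints interchanged and the sign bookkeeping dictated by \eqref{2rela}.

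First I would set $w:=I_-^{k-s}u$, so that by \eqref{ImportRela} we have $\dlx u=D^k w=w^{(k)}$, and hence $\big(\dlx u,v\big)=\int_{-1}^1 w^{(k)}(x)\,v(x)\,dx$. Integrating by parts $k$ times and collecting the boundary contributions yields
\[
\big(\dlx u,v\big)=\sum_{j=0}^{k-1}(-1)^{j}\,v^{(j)}(x)\,w^{(k-1-j)}(x)\Big|_{x=-1}^{x=1}+(-1)^k\int_{-1}^1 w(x)\,v^{(k)}(x)\,dx.
\]
Since $w^{(k-1-j)}=D^{k-j-1}I_-^{k-s}u$, the boundary sum is precisely the sum appearing in \eqref{left-byparts}, so it only remains to identify the last integral with $\big(u,{}^C\hspace*{-2pt}D_+^sv\big)$.

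For that, I would insert the definition of $w=I_-^{k-s}u$ from \eqref{leftintRL} (with $\rho=k-s$, noting $k-s\in(0,1]$), producing a double integral over the triangle $-1<y<x<1$ with integrand $v^{(k)}(x)\,u(y)\,(x-y)^{k-s-1}$. Swapping the order of integration by Fubini and then relabelling the dummy variables converts the inner integral into $\int_x^1 v^{(k)}(y)(y-x)^{k-s-1}\,dy$, which, together with the prefactor $(-1)^k/\Gamma(k-s)$, is exactly ${}^C\hspace*{-2pt}D_+^sv(x)$ by \eqref{right-fra-der-c}. This completes the derivation of \eqref{left-byparts}.

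The main obstacle is not algebraic but analytic: I must guarantee that $u$ and $v$ are regular enough for all $k$ integrations by parts performed on $w=I_-^{k-s}u$ and for the use of Fubini on the weakly singular double integral. The exponent $k-s-1\in(-1,0]$ makes the kernel locally integrable, so Fubini applies once $u$ and $v^{(k)}$ lie in $L^1(\Lambda)$; the delicate point is the existence and boundary behaviour of the terms $D^{k-j-1}I_-^{k-s}u$ at $x=\pm1$, which requires $u$ to be smooth enough that $I_-^{k-s}u\in C^{k-1}(\bar\Lambda)$. Under such regularity the formal manipulation above is rigorous, and the identical scheme, based on $\drx u=(-1)^kD^kI_+^{k-s}u$ and integration by parts anchored at the right endpoint, delivers \eqref{right-byparts}.
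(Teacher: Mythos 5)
Your derivation is correct. The paper itself gives no proof of this lemma --- it is stated with a citation to \cite{FSLP} --- so there is no internal argument to compare against; your reduction is the standard one and it checks out. Writing $w=I_-^{k-s}u$ so that $D_-^su=D^kw$ by \eqref{ImportRela}, the $k$-fold classical integration by parts produces exactly the boundary sum $\sum_{j=0}^{k-1}(-1)^{j}v^{(j)}D^{k-j-1}I_-^{k-s}u\big|_{-1}^{1}$ together with the residual term $(-1)^k\int_{-1}^1 w\,v^{(k)}$, and the Fubini swap over the triangle $-1<y<x<1$ turns that residual into $\int_{-1}^1 u(x)\,\frac{(-1)^k}{\Gamma(k-s)}\int_x^1 v^{(k)}(y)(y-x)^{k-s-1}dy\,dx$, which is $\big(u,{}^C\hspace*{-2pt}D_+^s v\big)$ by \eqref{right-fra-der-c}; the mirror computation with $w=I_+^{k-s}u$ and the extra factor $(-1)^k$ from \eqref{2rela} gives the sign $(-1)^{k-j}$ in \eqref{right-byparts} and identifies the residual with ${}^C\hspace*{-2pt}D_-^s v$ via \eqref{left-fra-der-c}. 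You are also right to flag the only genuine hypotheses: integrability of $u$ and $v^{(k)}$ (the kernel exponent $k-s-1\in(-1,0]$ is locally integrable, so Fubini applies) and enough regularity that $I_-^{k-s}u\in C^{k-1}(\bar\Lambda)$ so the boundary traces exist; the paper leaves these conditions implicit as well. In the one application the paper makes of this lemma (Corollary 3.1), those traces are computed explicitly and shown to vanish, so the caveat is harmless there.
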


\subsection{Jacobi polynomials with real parameters}  Much of our discussion later   will make use of  Jacobi polynomials with real parameters. Below,
we review their relevant properties.

Recall   the hypergeometric function (cf.  \cite{Abr.I64}):
\begin{equation}\label{hyperboscs}
{}_2F_1(a,b;c; x)=\sum_{j=0}^\infty \frac{(a)_j (b)_j}{(c)_j}\frac{x^j}{j!},\quad |x|<1,\;\;\; a,b,c\in {\mathbb R},\; -c\not \in {\mathbb N}_0,
\end{equation}
where   the rising factorial in the Pochhammer symbol, for $a\in {\mathbb R}$ and  $j\in {\mathbb N}_0,$ is defined by:
\begin{equation}\label{anotation}
(a)_0=1; \;\;\;  
(a)_j:=a(a+1)\cdots (a+j-1)=\frac{\Gamma(a+j)}{\Gamma(a)},\;\; {\rm for}\;\; j\ge 1.
\end{equation}
   If $a$ or $b$ is a negative integer, then it reduces to a  polynomial.

The classical Jacobi polynomials are defined for  parameters $\alpha,\beta>-1$.
  The Jacobi polynomials  can also be defined for  $\alpha\le -1$ and/or $\beta\le -1$ as in  Szeg\"o \cite[(4.21.2)]{szeg75}:
\begin{equation}\label{Jacobidefn0}
\begin{split}
P_n^{(\alpha,\beta)}(x)&=\frac{(\alpha+1)_n}{n!}{}_2F_1\Big(-n, n+\alpha+\beta+1;\alpha+1;\frac{1-x} 2\Big)\\
&=\frac {(\alpha+1)_n} {n!}+   \sum_{j=1}^{n-1} \frac{(n+\alpha+\beta+1)_j (\alpha+j+1)\cdots (\alpha+n)}{j!(n-j)!} \Big(\frac{x-1} 2\Big)^j\\
&\quad  + \frac {(n+\alpha+\beta+1)_n} {n!}  \Big(\frac{x-1} 2\Big)^n,\quad   n\ge 1,
\end{split}
\end{equation}
and  $P_0^{(\alpha,\beta)}(x)\equiv 1.$  Note that
{\em $P_n^{(\alpha,\beta)}(x)$  is always a polynomial in $x$ for all  $\alpha,\beta\in {\mathbb R}.$ }

Many properties of the classical  Jacobi polynomial (with    $\alpha,\beta>-1$)  can be extended to the general case (with  $\alpha,\beta\in {\mathbb R}$), see \cite[P. 62-67]{szeg75}.
In particular, there hold
 \begin{equation}\label{parity}
 P_n^{(\alpha,\beta)}(x)=(-1)^n P_n^{(\beta,\alpha)}(-x); \quad P_n^{(\alpha,\beta)}(1)=\frac{(\alpha+1)_n}{n!}.
 \end{equation}
Thus, we have the alternative representation:
\begin{equation}\label{Jacobidefn1}
\begin{split}
P_n^{(\alpha,\beta)}(x)&=(-1)^n \frac{(\beta+1)_n}{n!}{}_2F_1\Big(-n, n+\alpha+\beta+1;\beta+1;\frac{1+x} 2\Big),\;\; n\ge 1.
\end{split}
\end{equation}

  Since the leading coefficient of  $ P_n^{(\alpha,\beta)}(x)$ is $ {(n+\alpha+\beta+1)_n} /{(2^n n!)}$ (see  \eqref{Jacobidefn0}),  its  degree  is less than $n,$ when  $n+\alpha+\beta \in \{-1,\cdots, -n\}$ {\rm(}i.e.,  $(n+\alpha+\beta+1)_n=0).$   We also refer to  \cite[(4.22.3)]{szeg75} for details of the reduction.
%
 Throughout this paper, we assume that
\begin{equation}\label{conditions}
-(n+\alpha+\beta)\not \in {\mathbb N},\quad \forall\, n\ge 1,
\end{equation}
so  {\em $P_n^{(\alpha,\beta)}(x)$ is always a polynomial of degree $n.$}  Under the condition \eqref{conditions},
the  Jacobi polynomials defined by \eqref{Jacobidefn0}  can be computed by  the same three-term recurrence
 relation as the classical Jacobi polynomials:
\begin{equation}\label{ja3term}
\begin{split}
&P_{n+1}^{(\alpha,\beta)}(x)=\big(a_{n}^{\alpha,\beta}x-b_{n}^{\alpha,\beta}\big)P_{n}^{(\alpha,\beta)}(x)
-c_{n}^{\alpha,\beta}P_{n-1}^{(\alpha,\beta)}(x),\;\; n\ge 1,\\
&P_0^{(\alpha,\beta)}(x)=1,\;\;\; P_1^{(\alpha,\beta)}(x)=\frac 1
2(\alpha+\beta+2)x+\frac 1 2(\alpha-\beta),\\
\end{split}
\end{equation}
where
\begin{subequations}\label{jacoefabc}
\begin{gather}
a_{n}^{\alpha,\beta}=\frac{(2n+\alpha+\beta+1)(2n+\alpha+\beta+2)}
{2(n+1)(n+\alpha+\beta+1)},\label{jacoefa}\\
b_{n}^{\alpha,\beta}=\frac{(\beta^2-\alpha^2)(2n+\alpha+\beta+1)}
{2(n+1)(n+\alpha+\beta+1)(2n+\alpha+\beta)},\label{jacoefb}\\
c_{n}^{\alpha,\beta}=\frac{(n+\alpha)(n+\beta)(2n+\alpha+\beta+2)}
{(n+1)(n+\alpha+\beta+1)(2n+\alpha+\beta)}.\label{jacoefc}
\end{gather}
\end{subequations}

We particularly look at the Jacobi polynomials with one or both parameters being negative integers.
 If $\alpha=-l$ (with $l\in {\mathbb N}$),  $\beta\in {\mathbb R}$ and $n\ge l\ge 1,$  we
 have that (see  \cite[(4.22.2)]{szeg75})
\begin{equation}\label{alphaint}
P_n^{(-l,\beta)}(x)= d_{n}^{l,\beta} \Big(\frac{x-1} 2\Big)^l P_{n-l}^{(l,\beta)}(x), \; \; {\rm where}\;\;\;
d_{n}^{l,\beta}=\frac{(n-l)! (\beta+n-l+1)_l}{n!}.
\end{equation}
Similarly, 
for $\beta=-m,$ we find from \eqref{parity} and  \eqref{alphaint}  that
\begin{equation}\label{alphaint2}
P_n^{(\alpha,-m)}(x)= d_{n}^{m,\alpha} \Big(\frac{x+1} 2\Big)^m P_{n-m}^{(\alpha,m)}(x),\;\;\; n\ge m\ge 1, \;\;\; \alpha\in {\mathbb R}.
\end{equation}
Therefore, we deduce from  \eqref{alphaint}-\eqref{alphaint2} that for
$ n\ge l+m$ and $ l,m\in {\mathbb N},$
\begin{equation}\label{alphaint3}
P_n^{(-l,-m)}(x)= \Big(\frac{x-1} 2\Big)^l  \Big(\frac{x+1} 2\Big)^m P_{n-l-m}^{(l,m)}(x),
\end{equation}
where we used the fact $d_{n}^{l,-m} d_{n-l}^{m,l}=1.$

For  $\alpha,\beta>-1,$   the (classical) Jacobi polynomials  are orthogonal with respect to the Jacobi weight function:  $\omega^{(\alpha,\beta)}(x) = (1-x)^{\alpha}(1+x)^{\beta},$ namely,
\begin{equation}\label{jcbiorth}
    \int_{-1}^1 {P}_n^{(\alpha,\beta)}(x) {P}_{n'}^{(\alpha,\beta)}(x) \omega ^{(\alpha,\beta)}(x) \, dx= \gamma _n^{(\alpha,\beta)} \delta_{nn'},
\end{equation}
where $\delta_{nn'}$ is the Dirac Delta symbol, and  the normalization constant is given by
\begin{equation}\label{co-gamma}
\gamma _n^{(\alpha,\beta)} =\frac{2^{\alpha+\beta+1}\Gamma(n+\alpha+1)\Gamma(n+\beta+1)}{(2n+\alpha+\beta+1) n!\,\Gamma(n+\alpha+\beta+1)}.
\end{equation}
However, the orthogonality 
does not carry over to the general case. We refer to
 \cite{kuijlaars2005orthogonality} and \cite[Ch. 3]{koekoek2010hypergeometric} for details.

\subsection{Bateman fractional integral formula}
We recall the fractional integral formula of hypergeometric functions  due to Bateman \cite{Bateman1909} (also see \cite[P.  313]{Andrews99}):
for real $c,\rho\ge 0,$ 
  \begin{equation}\label{batemanformsab}
  {}_2F_1(a,b;c+\rho; x)=\frac{\Gamma(c+\rho)}{\Gamma(c)\Gamma(\rho)} x^{1-(c+\rho)}\int_0^x t^{c-1}(x-t)^{\rho-1}
  {}_2F_1(a,b;c; t)\, dt,\quad |x|<1,
  \end{equation}
  where the hypergeometric function $ {}_2F_1$  is defined in  \eqref{hyperboscs}.

The following formulas,   derived from  \eqref{Jacobidefn0} and  \eqref{batemanformsab}  (cf.
\cite[P. 96]{szeg75}),  are  indispensable for the subsequent discussion.
 \begin{lemma}\label{JacobiForm} Let  $\rho\in  {\mathbb R}^+,\; n\in {\mathbb N}_0$ and $x\in \Lambda.$
 \begin{itemize}
 \item[(i)] For  $\alpha>-1$ and $\beta\in {\mathbb R},$
\begin{equation}\label{batemanform0}
(1-x)^{\alpha+\rho} \frac{P_n^{(\alpha+\rho,\beta-\rho)}(x)}{P_n^{(\alpha+\rho,\beta-\rho)}(1)}
=\frac{\Gamma(\alpha+\rho+1)}{\Gamma(\alpha+1)\Gamma(\rho)}\int_{x}^1 \frac{(1-y)^\alpha} {(y-x)^{1-\rho}}
\frac{P_n^{(\alpha,\beta)}(y)}{P_n^{(\alpha,\beta)}(1)}\,  dy.
\end{equation}
\item[(ii)] For  $\alpha\in {\mathbb R}$ and $\beta>-1,$
\begin{equation}\label{batemanform1}
(1+x)^{\beta+\rho} \frac{P_n^{(\alpha-\rho,\beta+\rho)}(x)}{P_n^{(\beta+\rho,\alpha-\rho)}(1)}
=\frac{\Gamma(\beta+\rho+1)}{\Gamma(\beta+1)\Gamma(\rho)}\int_{-1}^x \frac{(1+y)^\beta} {(x-y)^{1-\rho}}
\frac{P_n^{(\alpha,\beta)}(y)}{P_n^{(\beta,\alpha)}(1)}\,  dy.
\end{equation}
\end{itemize}
\end{lemma}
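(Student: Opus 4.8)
The plan is to recognize that both \eqref{batemanform0} and \eqref{batemanform1} are nothing but Bateman's identity \eqref{batemanformsab} rewritten for Jacobi polynomials through their hypergeometric representations. The key structural observation is that the shift $(\alpha,\beta)\mapsto(\alpha+\rho,\beta-\rho)$ preserves the sum $\alpha+\beta$, hence leaves the top parameters $a=-n$ and $b=n+\alpha+\beta+1$ of the ${}_2F_1$ untouched, and merely raises the bottom parameter from $c=\alpha+1$ to $c+\rho=\alpha+\rho+1$. This is exactly the raising of the lower parameter that \eqref{batemanformsab} resolves via a fractional integral, so the identity should fall out after a single change of variables. Throughout I will use the normalization $P_n^{(\alpha,\beta)}(1)=(\alpha+1)_n/n!$ from \eqref{parity}, so that \eqref{Jacobidefn0} reads $P_n^{(\alpha,\beta)}(x)/P_n^{(\alpha,\beta)}(1)={}_2F_1(-n,n+\alpha+\beta+1;\alpha+1;\tfrac{1-x}{2})$.

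For part (i), first I would write the left-hand ratio as ${}_2F_1(-n,n+\alpha+\beta+1;\alpha+\rho+1;\tfrac{1-x}{2})$ using \eqref{Jacobidefn0}. Setting $a=-n$, $b=n+\alpha+\beta+1$, $c=\alpha+1$ and taking argument $X=\tfrac{1-x}{2}$, Bateman's formula \eqref{batemanformsab} expresses this as $\frac{\Gamma(\alpha+\rho+1)}{\Gamma(\alpha+1)\Gamma(\rho)}X^{-\alpha-\rho}\int_0^X t^{\alpha}(X-t)^{\rho-1}{}_2F_1(a,b;\alpha+1;t)\,dt$. The substitution $t=\tfrac{1-y}{2}$ then converts $\int_0^X\cdots dt$ into $\int_x^1\cdots dy$ (the orientation flip cancels the $-\tfrac12$ from $dt$), and turns $t^{\alpha}(X-t)^{\rho-1}\,dt$ into $2^{-\alpha-\rho}(1-y)^{\alpha}(y-x)^{\rho-1}\,dy$. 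Since the prefactor $X^{-\alpha-\rho}=2^{\alpha+\rho}(1-x)^{-\alpha-\rho}$ exactly cancels that $2^{-\alpha-\rho}$, after recognizing ${}_2F_1(a,b;\alpha+1;t)=P_n^{(\alpha,\beta)}(y)/P_n^{(\alpha,\beta)}(1)$ and multiplying both sides by $(1-x)^{\alpha+\rho}$ one lands precisely on \eqref{batemanform0}.

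For part (ii), I would run the mirror-image computation based on the alternative representation \eqref{Jacobidefn1}, namely $P_n^{(\alpha,\beta)}(x)/P_n^{(\beta,\alpha)}(1)=(-1)^n{}_2F_1(-n,n+\alpha+\beta+1;\beta+1;\tfrac{1+x}{2})$, now taking $c=\beta+1$, $X=\tfrac{1+x}{2}$ and substituting $t=\tfrac{1+y}{2}$ (here the limits $0\to X$ map to $-1\to x$ with no orientation flip). The factor $(-1)^n$ appears on both the left-hand ratio $P_n^{(\alpha-\rho,\beta+\rho)}(x)/P_n^{(\beta+\rho,\alpha-\rho)}(1)$ and inside the integral, so it cancels, yielding \eqref{batemanform1}. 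Alternatively, (ii) can be deduced from (i) by the reflection $x\mapsto-x$, $y\mapsto-y$ together with $\alpha\leftrightarrow\beta$, using the parity relation $P_n^{(\alpha,\beta)}(x)=(-1)^nP_n^{(\beta,\alpha)}(-x)$ of \eqref{parity}.

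The computation has no deep obstacle; the only point requiring care is matching the hypotheses to the domain of validity of \eqref{batemanformsab}. For the integral $\int_0^X t^{c-1}(X-t)^{\rho-1}\cdots dt$ to converge and for $\Gamma(c),\Gamma(\rho)$ to be finite we need $c>0$ and $\rho>0$; since $c=\alpha+1$ in (i) and $c=\beta+1$ in (ii), this forces exactly the asymmetric conditions $\alpha>-1$ (with $\beta\in\mathbb{R}$ free) and $\beta>-1$ (with $\alpha\in\mathbb{R}$ free) stated in the lemma, while $\rho\in\mathbb{R}^+$ supplies $\rho>0$. No convergence issue arises from the constraint $|x|<1$ in \eqref{batemanformsab}, because $a=-n$ renders every ${}_2F_1$ here a terminating (polynomial) series. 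The remaining work, namely tracking the powers of $2$ and the $(-1)^n$ symmetry factor, is purely bookkeeping and, as indicated above, cancels cleanly.
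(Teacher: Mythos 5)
Your proposal is correct and follows essentially the same route as the paper: the paper's own justification (given in the remark after the lemma) is precisely the substitution $a=-n$, $b=n+\alpha+\beta+1$, $c=\alpha+1$, $t=(1-y)/2$ in \eqref{batemanformsab} combined with \eqref{Jacobidefn0} for part (i), and the parity relation \eqref{parity} for part (ii), which is one of the two derivations you offer for \eqref{batemanform1}. Your write-up merely supplies the bookkeeping (powers of $2$, the $(-1)^n$ cancellation, and the observation that $\alpha+\beta$ is invariant under the parameter shift) that the paper leaves implicit.
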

\begin{rem}
 {  The  formulas \eqref{batemanform0}-\eqref{batemanform1}  can be found in several classical   books on orthogonal polynomials,
but it appears that their derivation is not well described.   In fact, taking  $a=-n, b=n+\alpha+\beta+1, c=\alpha+1$ and $t=(1-y)/2$ in \eqref{batemanformsab},
we obtain the formula \eqref{batemanform0}  from   \eqref{Jacobidefn0}. Similarly,  \eqref{batemanform1}  follows from  \eqref{parity} and \eqref{batemanform0}. \qed
}
\end{rem}

Using  the notation in  Definition  \ref{RLFDdefn}  and working out the constants by \eqref{parity},   we can  rewrite the formulas in Lemma \ref{JacobiForm} as follows.
\begin{lmm}\label{JacobiForm2} Let  $\rho\in  {\mathbb R}^+, \; n\in {\mathbb N}_0$ and $x\in \Lambda.$
 \begin{itemize}
 \item For $\alpha>-1$ and $\beta\in {\mathbb R},$
\begin{equation}\label{newbateman}
I_{+}^\rho\big\{(1-x)^\alpha P_n^{(\alpha,\beta)}(x)\big\}=\frac{\Gamma(n+\alpha+1)}{\Gamma(n+\alpha+\rho+1)}
(1-x)^{\alpha+\rho} P_n^{(\alpha+\rho,\beta-\rho)}(x).
\end{equation}
\item For   $ \alpha\in {\mathbb R}$ and
$\beta>-1,$
\begin{equation}\label{newbatemanam}
I_{-}^\rho\big\{(1+x)^\beta P_n^{(\alpha,\beta)}(x)\big\}=\frac{\Gamma(n+\beta+1)}{\Gamma(n+\beta+\rho+1)}
(1+x)^{\beta+\rho} P_n^{(\alpha-\rho,\beta+\rho)}(x).
\end{equation}
\end{itemize}
\end{lmm}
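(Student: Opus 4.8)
The plan is to recognize that both identities are nothing more than a repackaging of the Bateman formulas in Lemma \ref{JacobiForm}, obtained by unwinding the definition of the fractional integrals in Definition \ref{RLFDdefn} and then evaluating the normalization constants via the endpoint value in \eqref{parity}. I would treat the two cases in parallel, noting that the second is the mirror image of the first under $x\mapsto -x$ combined with the parity relation $P_n^{(\alpha,\beta)}(x)=(-1)^nP_n^{(\beta,\alpha)}(-x)$, exactly as \eqref{batemanform1} was derived from \eqref{batemanform0}.

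For \eqref{newbateman}, I would start from the definition $I_+^\rho\{(1-x)^\alpha P_n^{(\alpha,\beta)}(x)\}=\frac{1}{\Gamma(\rho)}\int_x^1 (y-x)^{\rho-1}(1-y)^\alpha P_n^{(\alpha,\beta)}(y)\,dy$ and observe that, apart from the prefactor $1/\Gamma(\rho)$, this is precisely the integral on the right-hand side of \eqref{batemanform0}. Solving \eqref{batemanform0} for that integral gives $I_+^\rho\{(1-x)^\alpha P_n^{(\alpha,\beta)}(x)\}=\frac{\Gamma(\alpha+1)}{\Gamma(\alpha+\rho+1)}\,\frac{P_n^{(\alpha,\beta)}(1)}{P_n^{(\alpha+\rho,\beta-\rho)}(1)}\,(1-x)^{\alpha+\rho}P_n^{(\alpha+\rho,\beta-\rho)}(x)$, so everything reduces to simplifying the constant. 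Using $P_n^{(\alpha,\beta)}(1)=(\alpha+1)_n/n!$ from \eqref{parity}, the ratio of normalizations equals $(\alpha+1)_n/(\alpha+\rho+1)_n$; converting the Pochhammer symbols through $(a)_n=\Gamma(a+n)/\Gamma(a)$ turns this into $\frac{\Gamma(n+\alpha+1)\Gamma(\alpha+\rho+1)}{\Gamma(\alpha+1)\Gamma(n+\alpha+\rho+1)}$. Substituting back, the factors $\Gamma(\alpha+1)$ and $\Gamma(\alpha+\rho+1)$ cancel and one is left with exactly $\frac{\Gamma(n+\alpha+1)}{\Gamma(n+\alpha+\rho+1)}$, which is \eqref{newbateman}.

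For \eqref{newbatemanam} I would run the identical argument starting from $I_-^\rho$ and \eqref{batemanform1}. The one point requiring care — and the only place where a slip would propagate an error — is that the normalizations appearing in \eqref{batemanform1} carry \emph{swapped} parameters, namely $P_n^{(\beta,\alpha)}(1)$ and $P_n^{(\beta+\rho,\alpha-\rho)}(1)$, which is a remnant of the parity relation used to produce that formula. Tracking these correctly yields the ratio $(\beta+1)_n/(\beta+\rho+1)_n$, and the same Gamma-function cancellation delivers the constant $\frac{\Gamma(n+\beta+1)}{\Gamma(n+\beta+\rho+1)}$. No analytic input beyond Lemma \ref{JacobiForm} and \eqref{parity} is needed; the proof is a purely algebraic rearrangement, and the main (minor) obstacle is simply the careful bookkeeping of the normalization constants.
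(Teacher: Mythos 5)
Your proposal is correct and is exactly the derivation the paper intends: it rewrites the Bateman formulas of Lemma \ref{JacobiForm} using the definition of $I_\pm^\rho$ and works out the constants via $P_n^{(\alpha,\beta)}(1)=(\alpha+1)_n/n!$, and your bookkeeping of the swapped normalizations $P_n^{(\beta,\alpha)}(1)$, $P_n^{(\beta+\rho,\alpha-\rho)}(1)$ in the second identity is right. The resulting constants $\Gamma(n+\alpha+1)/\Gamma(n+\alpha+\rho+1)$ and $\Gamma(n+\beta+1)/\Gamma(n+\beta+\rho+1)$ check out.
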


Thanks to  \eqref{rulesa}, we obtain from  Lemma \ref{JacobiForm2}    the following useful ``inverse''  rules.
\begin{lemma}\label{JacobiForm3} Let  $s\in  {\mathbb R}^+, \; n\in {\mathbb N}_0$ and $x\in \Lambda.$
 \begin{itemize}
 \item For $\alpha>-1$ and $\beta\in {\mathbb R},$
\begin{equation}\label{newbateman3}
D_+^s\big\{(1-x)^{\alpha+s} P_n^{(\alpha+s,\beta-s)}(x)\big\}
=\frac{\Gamma(n+\alpha+s+1)} {\Gamma(n+\alpha+1)}(1-x)^\alpha P_n^{(\alpha,\beta)}(x).
\end{equation}
\item For   $ \alpha\in {\mathbb R}$ and $ \beta>-1,$
\begin{equation}\label{newbatemanam3s}
D_-^s\big\{(1+x)^{\beta+s} P_n^{(\alpha-s,\beta+s)}(x)\big\}
=\frac{\Gamma(n+\beta+s+1)} {\Gamma(n+\beta+1)}(1+x)^\beta P_n^{(\alpha,\beta)}(x).
\end{equation}
\end{itemize}
\end{lemma}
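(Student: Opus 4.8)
The plan is to read \eqref{newbateman3} and \eqref{newbatemanam3s} as the fractional-derivative counterparts of the integration formulas \eqref{newbateman} and \eqref{newbatemanam} in Lemma \ref{JacobiForm2}, run ``backwards'' by means of the left-inverse property \eqref{rulesa}, i.e. $D_\pm^s I_\pm^s v = v$ a.e. No new computation of Jacobi coefficients is needed, since all the Gamma-factor bookkeeping is already packaged in Lemma \ref{JacobiForm2}; the proof is essentially a one-line application of $D_+^s$ (resp. $D_-^s$) to an identity we already possess.

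Concretely, for \eqref{newbateman3} I would specialize \eqref{newbateman} to $\rho = s$, which for $\alpha>-1$ and $\beta\in{\mathbb R}$ reads
\[
I_{+}^s\big\{(1-x)^\alpha P_n^{(\alpha,\beta)}(x)\big\}=\frac{\Gamma(n+\alpha+1)}{\Gamma(n+\alpha+s+1)}(1-x)^{\alpha+s} P_n^{(\alpha+s,\beta-s)}(x).
\]
Applying $D_+^s$ to both sides and using \eqref{rulesa} to collapse $D_+^s I_+^s$ on the left, we obtain
\[
(1-x)^\alpha P_n^{(\alpha,\beta)}(x)=\frac{\Gamma(n+\alpha+1)}{\Gamma(n+\alpha+s+1)}\, D_+^s\big\{(1-x)^{\alpha+s} P_n^{(\alpha+s,\beta-s)}(x)\big\},
\]
and moving the Gamma ratio across recovers \eqref{newbateman3} exactly. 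The companion identity \eqref{newbatemanam3s} follows in the same manner, starting instead from \eqref{newbatemanam} with $\rho=s$ (valid for $\alpha\in{\mathbb R}$, $\beta>-1$), applying $D_-^s$, and invoking $D_-^s I_-^s v = v$.

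The only point requiring care, and the reason the parameter restriction $\alpha>-1$ (resp. $\beta>-1$) appears in the hypotheses, is the admissibility of \eqref{rulesa}: that relation is stated for absolutely integrable $v$. Here $v(x)=(1-x)^\alpha P_n^{(\alpha,\beta)}(x)$, and since $P_n^{(\alpha,\beta)}$ is a polynomial, hence bounded on $\Lambda$, the factor $(1-x)^\alpha$ is integrable near $x=1$ precisely when $\alpha>-1$, while integrability near $x=-1$ is automatic; thus $v\in L^1(\Lambda)$ and \eqref{rulesa} applies. I expect this integrability check to be the main (and only) obstacle. A secondary technical remark: \eqref{rulesa} yields equality only almost everywhere, but both sides of \eqref{newbateman3} are continuous on the open interval $\Lambda$, so the a.e. identity upgrades to a genuine pointwise identity on $\Lambda$, as stated.
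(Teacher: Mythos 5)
Your proof is correct and is exactly the paper's argument: the paper derives Lemma \ref{JacobiForm3} by applying $D_\pm^s$ to the Bateman integration formulas of Lemma \ref{JacobiForm2} and invoking the left-inverse property \eqref{rulesa}. Your additional remarks on the integrability of $(1-x)^\alpha P_n^{(\alpha,\beta)}$ and the upgrade from a.e.\ to pointwise equality are sensible refinements that the paper leaves implicit.
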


Observe that if   $\alpha=0$ in  \eqref{newbateman3}, the fractional derivative operator $D_+^s$ takes
$(1-x)^{s} P_n^{(s,\beta-s)}(x)$ to the  polynomial $P_n^{(0,\beta)}(x).$ Conversely,  if $\alpha+s=k\in {\mathbb N}_0,$  $D_+^s$ takes the polynomial $(1-x)^kP_{n}^{(k,\beta-s)}(x)$ to
$(1-x)^{s-k} P_n^{(s-k,\beta)}(x).$ Such remarkable properties are essential for efficient spectral algorithms to be developed later.
We next show that the above non-polynomial functions  are  intimately  related to  the generalized Jacobi functions  introduced in   \cite{Guo.SW09}.
Moreover,
the Jacobi poly-fractonomials first introduced in  \cite{zayernouri2013fractional} also have direct bearing on these basis functions when  $s\in (0,1).$ 

\section{Generalized Jacobi functions}\label{sect:GJFs}
 In this section, we  modify the definition of  two subclasses of GJFs   in
 \cite{Guo.SW09},   leading  to  the basis functions of interest, which will be  still dubbed  as GJFs.
 We shall  
 demonstrate in Section \ref{sect:appl}   that  spectral algorithms  using   GJF as basis functions   produce  spectral accurate solutions for a class of
 prototypical fractional differential equations.

\subsection{Definition of GJFs}  
\begin{defn}[{\bf Generalized Jacobi functions}]\label{defnGJacbiII} {\em Define
\begin{equation}\label{GJFs+}
 {}^+{\hspace*{-3pt}}J_{n}^{(-\alpha,\beta)}(x):=(1-x)^\alpha P_{n}^{(\alpha,\beta)}(x),\quad {\rm for}\;\;  \alpha>-1, \;\; \beta\in \mathbb{R},
\end{equation}
 and
\begin{equation}\label{GJFs-}
  {}^{-}{\hspace*{-3pt}} J_{n}^{(\alpha,-\beta)}(x):= ({1+x})^\beta P_{n}^{(\alpha,\beta)}(x),\quad   {\rm for}\;\; \alpha\in \mathbb{R},\;\; \beta>-1,
\end{equation}
for  all $x\in \Lambda$ and $n\ge 0.$  
}
\end{defn}

\begin{rem}
Note that  the above definitions  modified the classical Jacobi polynomials in the range of  $-1<\alpha,\beta<1$.  \qed
\end{rem}

Recall the GJFs  introduced  in \cite[(2.7)]{Guo.SW09}:
\begin{equation}\label{jabdefn}
j_n^{(\alpha,\beta)}(x)=\begin{cases}
(1-x)^{-\alpha}(1+x)^{-\beta}P_{\hat n}^{(-\alpha,-\beta)}(x), & (\af,\bt)\in
\aleph_1,\;\; \hat n=n-[-\af]-[-\bt],\\
(1-x)^{-\af}P_{\hat n}^{(-\af,\bt)}(x),& (\af,\bt)\in \aleph_2,\;\;
\hat n=n-[-\af],\\
(1+x)^{-\bt}P_{\hat n}^{(\af,-\bt)}(x),& (\af,\bt)\in
\aleph_3,\;\; \hat n=n-[-\bt],\\
 P_{n}^{(\alpha,\beta)}(x),&(\af,\bt)\in
\aleph_4,
\end{cases}
\end{equation}
where
\begin{equation*}\label{alephset}
\begin{split}
&\aleph_1=\{(\af,\bt): \af,\bt\le  -1 \},\;\;\;
\;\aleph_2=\{(\af,\bt):
\af\le  -1, \;\bt>-1\},\\
&\aleph_3=\{(\af,\bt): \af>-1,\;\bt\le -1 \},\;\;\;\;
\aleph_4=\{(\af,\bt): \af,\bt>-1 \}.
         \end{split}
\end{equation*}
We elaborate below on the connection and difference between the new GJFs and  the GJFs defined in \eqref{jabdefn}.
\begin{itemize}
\item Comparing \eqref{GJFs+}-\eqref{GJFs-}  with \eqref{jabdefn},  we find
\begin{equation}\label{JacobiGJFs}
\begin{split}
&{}^+{\hspace*{-3pt}}J_{n}^{(-\alpha,\beta)}(x)= j_{n+[\alpha]}^{(-\alpha,\beta)}(x), \quad {\rm if}\;\; \alpha\ge 1, \;\; \beta>-1;\\
& {}^-{\hspace*{-3pt}}J_{n}^{(\alpha,-\beta)}(x)= j_{n+[\beta]}^{(\alpha,-\beta)}(x), \quad {\rm if}\;\; \alpha>-1, \;\; \beta\ge 1.
\end{split}
\end{equation}
\item By \eqref{alphaint}-\eqref{alphaint2}, we  find from  \eqref{GJFs+}-\eqref{GJFs-} that  for any $\alpha>-1, k\in {\mathbb N}_0$ and $n\ge k,$
 \begin{equation}\label{zerokpadd}
 \begin{split}
& {}^+ {\hspace*{-3pt}}J_{n}^{(-\alpha, -k)}(x)=2^{-k}d_n^{k,\alpha} (1-x)^\alpha(1+x)^kP_{n-k}^{(\alpha,k)}(x); \\
& {}^- {\hspace*{-3pt}}J_{n}^{(-k, -\alpha)}(x)=(-1)^k2^{-k}d_n^{k,\alpha} (1-x)^k(1+x)^\alpha P_{n-k}^{(k,\alpha)}(x),
\end{split}
\end{equation}
which, compared with  \eqref{jabdefn}, implies that  for $\alpha\ge 1$ and $n\ge k\ge 1,$
 \begin{equation}\label{zerokpadd2A}
 \begin{split}
& {}^+ {\hspace*{-3pt}}J_{n}^{(-\alpha, -k)}(x)= 2^{-k}d_n^{k,\alpha} j_{n+[\alpha]}^{(-\alpha,-k)}(x);\quad
 {}^- {\hspace*{-3pt}}J_{n}^{(-\alpha, -k)}(x)=(-1)^k2^{-k}d_n^{k,\alpha} j_{n+[\alpha]}^{(-k,-\alpha)}(x).
\end{split}
\end{equation}
Here, the constant $d_n^{k,\alpha}$ is defined in \eqref{alphaint}.
 \end{itemize}
We see that we  modified  the  definition of GJFs in \cite{Guo.SW09}  for the parameters in the ranges  other than those specified in
\eqref{JacobiGJFs} and \eqref{zerokpadd2A}. Indeed, this opens up new applicability of the GJFs in solving fractional differential equations, see Section \ref{sect:appl}.
\subsection{Properties of GJFs}   One verifies readily from \eqref{parity} and Definition \ref{defnGJacbiII} that  for  $\alpha>-1$ and  $\beta\in \mathbb{R}$,
\begin{equation}\label{parity2}
{}^+{\hspace*{-3pt}}J_{n}^{(-\alpha,\beta)}(-x)=(-1)^{n} \, {}^-{\hspace*{-3pt}}J_{n}^{(\beta,-\alpha)}(x),
\end{equation}
and for $-1<\alpha<1,$  there holds  the reflection property:
\begin{equation}\label{alphaA}
{}^+{\hspace*{-3pt}}J_{n}^{(-\alpha,-\alpha)}(x)=(1-x^2)^\alpha\;  {}^{-}{\hspace*{-3pt}} J_{n}^{(\alpha,\alpha)}(x).
\end{equation}

If  $-(n+\alpha+\beta)\not \in {\mathbb N},$ we can use  \eqref{ja3term} to evaluate  ${}^+{\hspace*{-3pt}}J_{n}^{(-\alpha,\beta)}$ recursively:
\begin{equation}\label{ja3termNew}
\begin{split}
&{}^+{\hspace*{-3pt}}J_{n+1}^{(-\alpha,\beta)}(x)=\big(a_{n}^{\alpha,\beta}x-b_{n}^{\alpha,\beta}\big){}^+{\hspace*{-3pt}}J_{n}^{(-\alpha,\beta)}(x)
-c_{n}^{\alpha,\beta}{}^+{\hspace*{-3pt}}J_{n-1}^{(-\alpha,\beta)}(x),\;\; n\ge 1,\\
&{}^+{\hspace*{-3pt}}J_{0}^{(-\alpha,\beta)}(x)=(1-x)^\alpha,\;\;\; {}^+{\hspace*{-3pt}}J_{1}^{(-\alpha,\beta)}(x)=
\big((\alpha+\beta+2)x+\alpha-\beta\big)(1-x)^{\alpha}/2,
\end{split}
\end{equation}
where $a_{n}^{\alpha,\beta}, b_{n}^{\alpha,\beta}, c_{n}^{\alpha,\beta}$ are defined in \eqref{jacoefabc}.  Accordingly,  we can compute  ${}^-{\hspace*{-3pt}}J_{n}^{(\alpha,-\beta)}(x)$   by   \eqref{parity2}.

We now study   the orthogonality  of  GJFs.  It follows straightforwardly from \eqref{jcbiorth} and Definition
\ref{defnGJacbiII} that for $\alpha,\beta>-1,$
\begin{equation}\label{basicorth}
\begin{split}
\int_{-1}^1 & {}^+{\hspace*{-3pt}} J_{n}^{(-\alpha,\beta)}(x) {}^+{\hspace*{-3pt}}J_{n'}^{(-\alpha,\beta)}(x)\,  \omega^{(-\alpha,\beta)}(x)\,dx\\
& =\int_{-1}^1 {}^-{\hspace*{-3pt}}J_{n}^{(\alpha,-\beta)}(x) {}^-{\hspace*{-3pt}}J_{n'}^{(\alpha,-\beta)}(x) \, \omega^{(\alpha,-\beta)}(x)\,dx=\gamma_n^{(\alpha,\beta)} \delta_{nn'},
\end{split}
 \end{equation}
 where $\gamma_n^{(\alpha,\beta)}$ is defined in \eqref{co-gamma}.  Similarly,  by \eqref{jcbiorth} and \eqref{zerokpadd}, we have that   for $\alpha>-1$ and $k\in {\mathbb N},$
\begin{equation}\label{orthogonalityII}
\begin{split}
\int_{-1}^1 &  {}^+{\hspace*{-3pt}}J_{n}^{(-\alpha,-k)} (x)\,
 {}^+{\hspace*{-3pt}}J_{n'}^{(-\alpha,-k)} (x) \,\omega^{(-\alpha,-k)}(x) \,dx \\
 &=
 \int_{-1}^1  {}^-{\hspace*{-3pt}}J_{n}^{(-k,-\alpha)} (x)\,
 {}^-{\hspace*{-3pt}}J_{n'}^{(-k,-\alpha)} (x)\, \omega^{(-k,-\alpha)} (x) \,dx = \gamma^{(\alpha,-k)}_{n}\delta_{nn'},\quad n, n'\geq k,
\end{split}
\end{equation}
where we used the fact
 $$\gamma^{(\alpha,-k)}_{n} =2^{-2k}(d_n^{k,\alpha})^2\gamma_{n-k}^{(\alpha,k)}.$$

Next, we discuss  the fractional calculus properties  of  GJFs.  The following fractional derivative formulas  can be derived straightforwardly from  Lemma \ref{JacobiForm3} and Definition \ref{defnGJacbiII}.
\begin{thm}\label{JacobiFormGJFs} Let  $s\in  {\mathbb R}^+, \; n\in {\mathbb N}_0$ and $x\in \Lambda.$
 \begin{itemize}
 \item For $\alpha>s-1$ and $\beta\in {\mathbb R},$
\begin{equation}\label{derivative+}
D_+^s\big\{{}^+{\hspace*{-3pt}}J_{n}^{(-\alpha,\beta)}(x)\big\}
=\frac{\Gamma(n+\alpha+1)} {\Gamma(n+\alpha-s+1)}{}^+{\hspace*{-3pt}}J_{n}^{(-\alpha+s,\beta+s)}(x).
\end{equation}
\item For   $ \alpha\in {\mathbb R}$ and $ \beta>s-1,$
\begin{equation}\label{derivative-}
D_-^s\big\{{}^{-}{\hspace*{-3pt}} J_{n}^{(\alpha,-\beta)}(x)\big\}
=\frac{\Gamma(n+\beta+1)} {\Gamma(n+\beta-s+1)}{}^{-}{\hspace*{-3pt}} J_{n}^{(\alpha+s,-\beta+s)}(x).
\end{equation}
\end{itemize}
\end{thm}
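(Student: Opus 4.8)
The plan is to derive both identities by a single change of parameters in the ``inverse'' rules of Lemma~\ref{JacobiForm3}, after expressing the GJFs through their defining formulas in Definition~\ref{defnGJacbiII}. No new machinery is needed: each identity reduces to matching exponents and Jacobi parameters, checking one admissibility constraint, and simplifying a ratio of Gamma functions.

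For the first identity I would begin from ${}^{+}\!J_{n}^{(-\alpha,\beta)}(x) = (1-x)^{\alpha} P_n^{(\alpha,\beta)}(x)$, which is legitimate because $s>0$ forces $\alpha > s-1 > -1$. The aim is to match the argument $(1-x)^{\alpha} P_n^{(\alpha,\beta)}(x)$ with the left-hand side $(1-x)^{\tilde\alpha+s} P_n^{(\tilde\alpha+s,\tilde\beta-s)}(x)$ of \eqref{newbateman3}; equating exponents and parameters forces the substitution $\tilde\alpha = \alpha - s$ and $\tilde\beta = \beta + s$. The crucial observation is that the hypothesis $\tilde\alpha > -1$ of Lemma~\ref{JacobiForm3} becomes exactly $\alpha > s-1$, which is precisely our assumption, while $\tilde\beta\in\mathbb{R}$ is automatic. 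Feeding these values into the right-hand side of \eqref{newbateman3} and using $\Gamma(n+\tilde\alpha+s+1)=\Gamma(n+\alpha+1)$ and $\Gamma(n+\tilde\alpha+1)=\Gamma(n+\alpha-s+1)$ yields $\frac{\Gamma(n+\alpha+1)}{\Gamma(n+\alpha-s+1)}(1-x)^{\alpha-s} P_n^{(\alpha-s,\beta+s)}(x)$; reading the factor $(1-x)^{\alpha-s} P_n^{(\alpha-s,\beta+s)}(x)$ back through Definition~\ref{defnGJacbiII} as ${}^{+}\!J_{n}^{(-\alpha+s,\beta+s)}(x)$ — valid since $\alpha-s>-1$ — produces exactly \eqref{derivative+}. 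For the second identity I would proceed symmetrically with the companion rule \eqref{newbatemanam3s}: writing ${}^{-}\!J_{n}^{(\alpha,-\beta)}(x) = (1+x)^{\beta} P_n^{(\alpha,\beta)}(x)$ and matching against $(1+x)^{\tilde\beta+s} P_n^{(\tilde\alpha-s,\tilde\beta+s)}(x)$ forces $\tilde\beta = \beta - s$, $\tilde\alpha = \alpha + s$, so that the lemma's requirement $\tilde\beta > -1$ is exactly $\beta > s-1$, and the same Gamma cancellation gives \eqref{derivative-}. Alternatively one could deduce the second identity from the first via the parity relation \eqref{parity2}, but this would first require recording how the reflection $x \mapsto -x$ interchanges $D_+^s$ and $D_-^s$; the direct route through \eqref{newbatemanam3s} avoids that extra bookkeeping and keeps the two cases on an equal footing.

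Since every step is a substitution, there is no serious obstacle; the one point demanding genuine care is the bookkeeping of the parameter shifts, and in particular verifying that the admissibility constraints of Lemma~\ref{JacobiForm3} (namely $\tilde\alpha>-1$ in the first case and $\tilde\beta>-1$ in the second) transcribe exactly into the stated hypotheses $\alpha>s-1$ and $\beta>s-1$. Confirming this alignment, together with the matching collapse of the Gamma ratios, is what turns the otherwise routine computation into the clean statement asserted.
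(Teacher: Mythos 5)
Your proposal is correct and follows exactly the route the paper intends: the paper simply states that Theorem~\ref{JacobiFormGJFs} ``can be derived straightforwardly from Lemma~\ref{JacobiForm3} and Definition~\ref{defnGJacbiII},'' and your parameter substitution $\tilde\alpha=\alpha-s$, $\tilde\beta=\beta+s$ (and its mirror for the second identity) supplies precisely the omitted bookkeeping, including the correct transcription of the admissibility constraints into $\alpha>s-1$ and $\beta>s-1$.
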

\vskip 2pt
\noindent Some remarks on Theorem  \ref{JacobiFormGJFs}  are in order.
\begin{itemize}
\item If $\alpha-s>-1$ and $\beta+s>-1$ with $s\in  {\mathbb R}^+,$ then by \eqref{basicorth} and \eqref{derivative+},     $\big\{D_+^s {}^+{\hspace*{-3pt}}J_{n}^{(-\alpha,\beta)}\big\}$ are mutually orthogonal with respect to the weight function $\omega^{(-\alpha+s, \beta+s)}(x).$ Similarly,  $\big\{D_-^s {}^-{\hspace*{-3pt}}J_{n}^{(\alpha,-\beta)}\big\}$ are mutually orthogonal  with respect to $\omega^{(\alpha+s, -\beta+s)}(x),$ when  $\alpha+s>-1$ and $\beta-s>-1.$
\item A very important special case of \eqref{derivative+}  is that  for  $\alpha>0$ and $\beta\in {\mathbb R},$
\begin{equation}\label{derivative+A}
D_+^\alpha\big\{{}^+{\hspace*{-3pt}}J_{n}^{(-\alpha,\beta)}(x)\big\}
=\frac{\Gamma(n+\alpha+1)} {n!}{}^+{\hspace*{-3pt}}J_{n}^{(0,\alpha+\beta)}(x)=\frac{\Gamma(n+\alpha+1)} {n!}P_n^{(0,\alpha+\beta)}(x).
\end{equation}
Similarly, by \eqref{derivative-}, we have that  for $\alpha\in {\mathbb R}$ and real $\beta>0,$
\begin{equation}\label{derivative+B}
D_-^\beta\big\{{}^-{\hspace*{-3pt}}J_{n}^{(\alpha,-\beta)}(x)\big\}
=\frac{\Gamma(n+\beta+1)} {n!}P_n^{(\alpha+\beta,0)}(x).
\end{equation}
These two formulas  indicate that performing a suitable order of fractional derivatives on   GJFs leads to polynomials.
\end{itemize}

The analysis of  the approximability of GJFs essentially relies on the orthogonality of fractional derivatives of GJFs.   To study this,
we first recall  the derivative formula of  the classical Jacobi polynomials (see,  e.g., \cite[P. 72]{ShenTangWang2011}):  for $\alpha,\beta>-1$ and $n\ge l, $
\begin{equation}\label{derimulti}
D^l P_n^{(\alpha,\beta)}(x)= \kappa_{n,l}^{(\alpha,\beta)} P_{n-l}^{(\alpha+l,\beta+l)}(x),\;\;  {\rm where}\;\;   \kappa_{n,l}^{(\alpha,\beta)}:=\frac{\Gamma(n+\alpha+\beta+l+1)}{2^l\Gamma(n+\alpha+\beta+1)}.
\end{equation}
Noting that $D^{s+l}_\pm=(\mp1)^lD^l D_\pm^s,$ we derive from \eqref{jcbiorth} and \eqref{derivative+A}-\eqref{derimulti}  the following orthogonality.
\begin{itemize}
\item For $\alpha>0$ and $\alpha+\beta>-1, $
\begin{equation}\label{D+orthl}
\begin{split}
\int_{-1}^1 & D_+^{\alpha+l}\, {}^+{\hspace*{-3pt}} J_{n}^{(-\alpha,\beta)}(x)\,  D_+^{\alpha+l}\, {}^+{\hspace*{-3pt}}J_{n'}^{(-\alpha,\beta)}(x)\,  \omega^{(l,\alpha+\beta+l)}(x)\,dx=h_{n,l}^{(\alpha,\beta)} \delta_{nn'},\;\;\; n,n'\ge l\ge0,
\end{split}
 \end{equation}
 where
 \begin{equation}\label{constanth}
\begin{split}
h_{n, l}^{(\alpha,\beta)}&:=\frac{\Gamma^2(n+\alpha+1)}{(n!)^2} \big(\kappa_{n,l}^{(0,\alpha+\beta)}\big)^2\gamma_{n-l}^{(l,\alpha+\beta+l)}\\
&=
\frac{2^{\alpha+\beta+1}\Gamma^2(n+\alpha+1)\,\Gamma(n+\alpha+\beta+l+1)}{(2n+\alpha+\beta+1)\,n!\,(n-l)!\, \Gamma(n+\alpha+\beta+1)}.
\end{split}
\end{equation}
\item For $\alpha+\beta>-1$ and $\beta>0, $
\begin{equation}\label{D-orthl}
\begin{split}
\int_{-1}^1 & D_-^{\beta+l}\, {}^-{\hspace*{-3pt}} J_{n}^{(\alpha,-\beta)}(x)\,  D_-^{\beta+l}\, {}^-{\hspace*{-3pt}}J_{n'}^{(\alpha,-\beta)}(x)\,  \omega^{(\alpha+\beta+l,l)}(x)\,dx=h_{n,l}^{(\beta,\alpha)} \delta_{nn'},\;\;\; n,n'\ge l\ge0.
\end{split}
 \end{equation}
\end{itemize}

\vskip 4pt
Another attractive  property of   GJFs is that they are eigenfunctions of  fractional  Sturm-Liouville-type equations.  To show this,
we define the fractional Sturm-Liouville-type  operators:
\begin{equation}\label{SLprobJGF+AB}
{}^+{\hspace*{-3pt}}{\mathcal L}^{2s}_{\alpha,\beta}u:= \omega^{(\alpha,-\beta)} D_-^s \big\{ \omega^{(-\alpha+s,\beta+s)} D_+^s  u\big\};\;\;
{}^-{\hspace*{-3pt}}{\mathcal L}^{2s}_{\alpha,\beta}u:= \omega^{(-\alpha,\beta)} D_+^s \big\{ \omega^{(\alpha+s,-\beta+s)} D_-^s  u\big\}.
 \end{equation}
\begin{thm}\label{thm:SLprob} Let $s\in {\mathbb R}^+, \; n\in {\mathbb N}_0 $ and $x\in \Lambda$. 
\begin{itemize}
\item For $\alpha>s-1$ and $\beta>-1,$  
\begin{equation}\label{SLprobJGF+}
{}^+{\hspace*{-3pt}}{\mathcal L}^{2s}_{\alpha,\beta}{}^+{\hspace*{-3pt}}J_{n}^{(-\alpha,\beta)}(x)
=\lambda_{n,s}^{(\alpha,\beta)}\, {}^+{\hspace*{-3pt}}J_{n}^{(-\alpha,\beta)}(x),
 \end{equation}
 where  
 \begin{equation}\label{abweights}
 \lambda_{n,s}^{(\alpha,\beta)}:=\frac{\Gamma(n+\alpha+1)} {\Gamma(n+\alpha-s+1)} \frac{\Gamma(n+\beta+s+1)} {\Gamma(n+\beta+1)}.
 \end{equation}
 \item For $\alpha>-1$ and $\beta>s-1,$
\begin{equation}\label{SLprobJGF-}
{}^-{\hspace*{-3pt}}{\mathcal L}^{2s}_{\alpha,\beta}{}^-{\hspace*{-3pt}}J_{n}^{(\alpha,-\beta)}(x)
=\lambda_{n,s}^{(\beta,\alpha)}\, {}^-{\hspace*{-3pt}}J_{n}^{(\alpha,-\beta)}(x).
 \end{equation}
\end{itemize}
 \end{thm}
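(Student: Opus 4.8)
The plan is to prove Theorem~\ref{thm:SLprob} by directly substituting the fractional derivative formulas from Theorem~\ref{JacobiFormGJFs} into the Sturm-Liouville operators defined in \eqref{SLprobJGF+AB}, reducing the eigenvalue identity to a composition of two first-order (in the fractional sense) operations. I will treat only the ``$+$'' case \eqref{SLprobJGF+}, since the ``$-$'' case \eqref{SLprobJGF-} follows by the reflection symmetry \eqref{parity2} together with the parity already used to pass between the two families; alternatively it is obtained by the identical argument with the roles of the endpoints swapped.

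First I would apply the inner operator. Since $\alpha>s-1$ and $\beta\in\mathbb{R}$, formula \eqref{derivative+} gives
\begin{equation*}
D_+^s\big\{{}^+{\hspace*{-3pt}}J_{n}^{(-\alpha,\beta)}(x)\big\}
=\frac{\Gamma(n+\alpha+1)}{\Gamma(n+\alpha-s+1)}\,{}^+{\hspace*{-3pt}}J_{n}^{(-\alpha+s,\beta+s)}(x).
\end{equation*}
Next I multiply by the weight $\omega^{(-\alpha+s,\beta+s)}(x)=(1-x)^{-\alpha+s}(1+x)^{\beta+s}$. Using the defining relation \eqref{GJFs+}, namely ${}^+{\hspace*{-3pt}}J_{n}^{(-\alpha+s,\beta+s)}(x)=(1-x)^{\alpha-s}P_n^{(\alpha-s,\beta+s)}(x)$, the two factors $(1-x)^{-\alpha+s}$ and $(1-x)^{\alpha-s}$ cancel, so the product collapses to
\begin{equation*}
\omega^{(-\alpha+s,\beta+s)}(x)\,{}^+{\hspace*{-3pt}}J_{n}^{(-\alpha+s,\beta+s)}(x)
=(1+x)^{\beta+s}P_n^{(\alpha-s,\beta+s)}(x)
={}^-{\hspace*{-3pt}}J_{n}^{(\alpha-s,-\beta-s)}(x),
\end{equation*}
where the last equality is the definition \eqref{GJFs-} with parameters $(\alpha-s,\beta+s)$. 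This rewriting as a ``$-$''-type GJF is the crucial bookkeeping step, because it puts the expression into exactly the shape on which the left fractional derivative formula \eqref{derivative-} operates.

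Now I apply the outer operator $D_-^s$. Formula \eqref{derivative-} requires the second parameter to exceed $s-1$; here that parameter is $\beta+s$, and the hypothesis $\beta>-1$ guarantees $\beta+s>s-1$, so the formula is legitimate and yields
\begin{equation*}
D_-^s\big\{{}^-{\hspace*{-3pt}}J_{n}^{(\alpha-s,-\beta-s)}(x)\big\}
=\frac{\Gamma(n+\beta+s+1)}{\Gamma(n+\beta+1)}\,{}^-{\hspace*{-3pt}}J_{n}^{(\alpha,-\beta)}(x).
\end{equation*}
Finally I multiply by the outer weight $\omega^{(\alpha,-\beta)}(x)=(1-x)^{\alpha}(1+x)^{-\beta}$ and invoke \eqref{GJFs-} once more to write ${}^-{\hspace*{-3pt}}J_{n}^{(\alpha,-\beta)}(x)=(1+x)^{\beta}P_n^{(\alpha,\beta)}(x)$; the factors $(1+x)^{-\beta}$ and $(1+x)^{\beta}$ cancel, leaving $(1-x)^{\alpha}P_n^{(\alpha,\beta)}(x)={}^+{\hspace*{-3pt}}J_n^{(-\alpha,\beta)}(x)$ by \eqref{GJFs+}. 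Collecting the two Gamma-function prefactors reproduces exactly $\lambda_{n,s}^{(\alpha,\beta)}$ as defined in \eqref{abweights}, completing the identity.

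I do not anticipate a genuine analytic obstacle, since the computation is a telescoping chain of the two algebraic derivative formulas; the only points requiring care are verifying that the parameter restrictions are met at each application (in particular that $\beta+s>s-1$ permits the second step, and that $\alpha>s-1$ permits the first) and correctly matching the weights so that the intermediate quantity is recognized as a ``$-$''-type GJF. The main subtlety is therefore the careful alternation between the two GJF families via the definitions \eqref{GJFs+}--\eqref{GJFs-}, ensuring the weight factors cancel at each stage; once this pairing is set up, the eigenvalue emerges automatically as the product of the two ratios of Gamma functions.
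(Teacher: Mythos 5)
Your proposal is correct and follows essentially the same route as the paper's proof: apply \eqref{derivative+}, multiply by the inner weight so that only the factor $(1+x)^{\beta+s}P_n^{(\alpha-s,\beta+s)}$ survives, then apply $D_-^s$ via \eqref{newbatemanam3s} (equivalently your \eqref{derivative-} with shifted parameters) and cancel the outer weight. Your parameter bookkeeping ($\alpha>s-1$ for the first step, $\beta+s>s-1$ for the second) matches the paper's hypotheses exactly, and the second case is likewise deferred to a symmetric argument in both treatments.
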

\begin{proof}  By  Definition \ref{defnGJacbiII} and  \eqref{derivative+},  we have that for $\alpha>s-1,$
\begin{equation}\label{tempA}
\begin{split}
(1-x)^{-\alpha+s}(1+x)^{\beta+s} & D_+^s \big\{{}^+{\hspace*{-3pt}}J_{n}^{(-\alpha,\beta)}(x)\big\}
=\frac{\Gamma(n+\alpha+1)} {\Gamma(n+\alpha-s+1)}  \, (1+x)^{\beta+s}P_n^{(\alpha-s,\beta+s)}(x).
\end{split}
\end{equation}
Applying $D_-^s$ on both sides of the above identity and tracking the constants, we derive from \eqref{newbatemanam3s}    that for $\beta>-1,$
 \begin{equation*}
 \begin{split}
D_-^s \big\{ \omega^{(-\alpha+s,\beta+s)}(x) &D_+^s  \big\{{}^+{\hspace*{-3pt}}J_{n}^{(-\alpha,\beta)}(x)\big\}\big\}=
 \lambda_{n,s}^{(\alpha,\beta)}\, (1+x)^{\beta} P_n^{(\alpha,\beta)}(x)\\
 &=\lambda_{n,s}^{(\alpha,\beta)}\, \omega^{(-\alpha,\beta)} (x)\,{}^+{\hspace*{-3pt}}J_{n}^{(-\alpha,\beta)}(x).
\end{split}
 \end{equation*}
 This yields   \eqref{SLprobJGF+}.

The property  \eqref{SLprobJGF-}  can be proved  in a very  similar fashion.
\end{proof}
\begin{rem}\label{SLspecial}{
The above results can be viewed as an extension of the standard Sturm-Liouville problems  of   GJFs  to the fractional derivative case.  In \cite{Guo.SW09},   we showed that GJFs defined  therein are the eigenfunctions of  the standard Sturm-Liouville problems. \qed}
\end{rem}
\begin{rem}
 We derive immediately from \eqref{abweights} and the Stirling's formula (see \eqref{stirlingfor} below)  that for fixed $s, \alpha,\beta,$
 $$\lambda_{n,s}^{(\alpha,\beta)}=O(n^{2s}), \quad {\rm for}\;\; n\gg 1.$$
 When $s\to 1,$  this  recovers  the $O(n^2)$ growth of   eigenvalues of the standard Sturm-Liouville problem. \qed
\end{rem}

Note that the fractional Sturm-Liouville operators defined in \eqref{SLprobJGF+AB} are not self-adjoint in general.  However, the singular fractional Sturm-Liouville problems are self-adjoint, when $s\in (0,1).$

\begin{cor}\label{addcor} Let $s\in (0,1), \; n\in {\mathbb N}_0 $ and $x\in \Lambda.$
\begin{itemize}
\item For $0<\alpha<s$ and $\beta>-s,$  we have that in \eqref{SLprobJGF+},
\begin{equation}\label{adjointA}
{}^+{\hspace*{-3pt}}{\mathcal L}^{2s}_{\alpha,\beta}{}^+{\hspace*{-3pt}}J_{n}^{(-\alpha,\beta)} = \omega^{(\alpha,-\beta)} D_-^s \big\{ \omega^{(-\alpha+s,\beta+s)} {}^C\hspace*{-2pt}D_+^s  {}^+{\hspace*{-3pt}}J_{n}^{(-\alpha,\beta)}\big\},\;\;
 \end{equation}
and
 \begin{equation}\label{adjointpropA}
 \begin{split}
 & \big({}^+\hspace*{-2pt}{\mathcal L}_{\alpha,\beta}^{2s}\, {}^+ {\hspace*{-3pt}}J_{n}^{(-\alpha,\beta)}, {}^+{\hspace*{-3pt}}J_{m}^{(-\alpha,\beta)}\big)_{\omega^{(-\alpha,\beta)}}=
\big({^C}\hspace*{-2pt}D_+^s\,{}^+{\hspace*{-3pt}}J_{n}^{(-\alpha,\beta)}, {^C}\hspace*{-2pt}D_+^s\,{}^+{\hspace*{-3pt}}J_{m}^{(-\alpha,\beta)}  \big)_{\omega^{(-\alpha-s,\beta+s)}}\\
& =\big({}^+{\hspace*{-3pt}}J_{n}^{(-\alpha,\beta)}, {}^+\hspace*{-2pt}{\mathcal L}_{\alpha,\beta}^{2s}\, {}^+{\hspace*{-3pt}}J_{m}^{(-\alpha,\beta)}\big)_{\omega^{(-\alpha,\beta)}}= \lambda_{n,s}^{(\alpha,\beta)}\gamma_n^{(-\alpha,\beta)}
\delta_{nm}.
\end{split}
 \end{equation}
\item Similarly,  for $\alpha>-s$ and $0<\beta<s,$ we have that in  \eqref{SLprobJGF-},
\begin{equation}\label{adjointB}
{}^-{\hspace*{-3pt}}{\mathcal L}^{2s}_{\alpha,\beta}{}^- {\hspace*{-3pt}}J_{n}^{(\alpha,-\beta)} = \omega^{(-\alpha,\beta)} D_+^s \big\{ \omega^{(\alpha+s,-\beta+s)} {}^C\hspace*{-2pt}D_-^s  {}^- {\hspace*{-3pt}}J_{n}^{(\alpha,-\beta)}\big\},
\end{equation}
and
 \begin{equation}\label{adjointpropB}
 \begin{split}
 & \big({}^-\hspace*{-2pt}{\mathcal L}_{\alpha,\beta}^{2s}\, {}^- {\hspace*{-3pt}}J_{n}^{(\alpha,-\beta)}, {}^-{\hspace*{-3pt}}J_{m}^{(\alpha,-\beta)}\big)_{\omega^{(\alpha,-\beta)}}=
\big({^C}\hspace*{-2pt}D_-^s\,{}^-{\hspace*{-3pt}}J_{n}^{(\alpha,-\beta)}, {^C}\hspace*{-2pt}D_-^s\,{}^-{\hspace*{-3pt}}J_{m}^{(\alpha,-\beta)}  \big)_{\omega^{(\alpha+s,-\beta-s)}}\\
& =\big({}^-{\hspace*{-3pt}}J_{n}^{(\alpha,-\beta)}, {}^-\hspace*{-2pt}{\mathcal L}_{\alpha,\beta}^{2s}\, {}^-{\hspace*{-3pt}}J_{m}^{(\alpha,-\beta)}\big)_{\omega^{(\alpha,-\beta)}}= \lambda_{n,s}^{(\beta,\alpha)}\gamma_n^{(\alpha,-\beta)}
\delta_{nm}.
\end{split}
 \end{equation}
 \end{itemize}
\end{cor}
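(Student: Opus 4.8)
The plan is to reduce everything to the $+$ case and to exploit that, for $s\in(0,1)$, the integer $k$ in Lemma~\ref{integration} equals $1$, so every sum over $j$ collapses to its $j=0$ term. Write $u={}^+J_n^{(-\alpha,\beta)}$ and $w={}^+J_m^{(-\alpha,\beta)}$. I would first dispose of \eqref{adjointA}: since $0<\alpha<s$, the GJF $u=(1-x)^\alpha P_n^{(\alpha,\beta)}(x)$ of \eqref{GJFs+} satisfies $u(1)=0$, so by \eqref{RL=Ca} in Remark~\ref{RLCa} (with $k=1$) the right-sided Riemann--Liouville and Caputo derivatives coincide, $D_+^s u={}^C\hspace*{-2pt}D_+^s u$. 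Inserting this equality into the operator \eqref{SLprobJGF+AB} gives \eqref{adjointA} at once.

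For the first equality in \eqref{adjointpropA}, I would expand the weighted inner product and cancel $\omega^{(\alpha,-\beta)}\omega^{(-\alpha,\beta)}\equiv 1$ to reduce it to $\big(D_-^s U,\,w\big)$ with $U:=\omega^{(-\alpha+s,\beta+s)}D_+^s u$. The fractional integration-by-parts formula \eqref{left-byparts}, again with $k=1$, then gives $\big(D_-^s U,w\big)=\big(U,{}^C\hspace*{-2pt}D_+^s w\big)+\big[\,w(x)\,I_-^{1-s}U(x)\,\big]_{x=-1}^{x=1}$, moving the operator onto the test function as a Caputo derivative. Replacing $D_+^s u$ by ${}^C\hspace*{-2pt}D_+^s u$ inside $U$ turns the first term into the middle inner product, carrying the weight $\omega^{(-\alpha+s,\beta+s)}$ inherited from the operator.

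The heart of the argument is that the boundary bracket vanishes, and this is exactly where the window $0<\alpha<s$, $\beta>-s$ enters. Using \eqref{tempA} I would identify $U=\frac{\Gamma(n+\alpha+1)}{\Gamma(n+\alpha-s+1)}(1+x)^{\beta+s}P_n^{(\alpha-s,\beta+s)}(x)$ and then apply the Bateman formula \eqref{newbatemanam} with $\rho=1-s$ (admissible since $\beta+s>-1$) to obtain $I_-^{1-s}U=\lambda_{n,s}^{(\alpha,\beta)}(1+x)^{\beta+1}P_n^{(\alpha-1,\beta+1)}(x)$. Because $\beta>-s$ gives $\beta+1>0$, the factor $(1+x)^{\beta+1}$ kills the bracket at $x=-1$; because $\alpha>0$ gives $w(1)=0$, it also vanishes at $x=1$. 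I expect this explicit evaluation of $I_-^{1-s}U$, together with the endpoint bookkeeping, to be the only delicate point.

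Finally, the middle quantity $\big({}^C\hspace*{-2pt}D_+^s u,{}^C\hspace*{-2pt}D_+^s w\big)$ is manifestly symmetric in $n$ and $m$; applying the first equality with the roles of $n$ and $m$ interchanged (and using symmetry of the weighted $L^2$ inner product) yields the third equality $\big(u,{}^+{\mathcal L}^{2s}_{\alpha,\beta}w\big)_{\omega^{(-\alpha,\beta)}}$, which is the asserted self-adjointness. The closed-form value then drops out by substituting the eigen-relation \eqref{SLprobJGF+} into this last form and using the orthogonality \eqref{basicorth}, giving $\lambda_{n,s}^{(\alpha,\beta)}$ times the $\omega^{(-\alpha,\beta)}$-norm of $u$, i.e. $\lambda_{n,s}^{(\alpha,\beta)}\gamma_n^{(\alpha,\beta)}\delta_{nm}$. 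The minus-case identities \eqref{adjointB}--\eqref{adjointpropB} follow verbatim after interchanging the two endpoints, or more economically from the reflection symmetry \eqref{parity2}.
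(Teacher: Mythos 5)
Your argument is correct and follows essentially the same route as the paper's proof: replace $D_+^s$ by ${}^C\hspace*{-2pt}D_+^s$ via \eqref{RL=Ca} since ${}^+{\hspace*{-3pt}}J_{n}^{(-\alpha,\beta)}(1)=0$, evaluate $I_-^{1-s}\big\{\omega^{(-\alpha+s,\beta+s)}\,{}^C\hspace*{-2pt}D_+^s\,{}^+{\hspace*{-3pt}}J_{n}^{(-\alpha,\beta)}\big\}$ through \eqref{tempA} and \eqref{newbatemanam} so that the boundary bracket in \eqref{left-byparts} (with $k=1$) vanishes at both endpoints, and then read off the orthogonality from \eqref{SLprobJGF+} and \eqref{basicorth}. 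The only quibbles are cosmetic: the constant in your expression for $I_-^{1-s}U$ should be $\lambda_{n,s}^{(\alpha,\beta)}/(n+\beta+1)$ rather than $\lambda_{n,s}^{(\alpha,\beta)}$ (immaterial, since only the factor $(1+x)^{\beta+1}$ and $w(1)=0$ are needed), and your weight $\omega^{(-\alpha+s,\beta+s)}$ and normalization $\gamma_n^{(\alpha,\beta)}$ are the ones actually consistent with \eqref{basicorth} and the definition \eqref{SLprobJGF+AB}, which suggests the printed $\omega^{(-\alpha-s,\beta+s)}$ and $\gamma_n^{(-\alpha,\beta)}$ in the corollary are typographical.
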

\begin{proof}
We just prove the results for ${}^+{\hspace*{-3pt}}J_{n}^{(-\alpha,\beta)}(x).$
For $\alpha>0$ and $s\in (0,1),$ since ${}^+{\hspace*{-3pt}}J_{n}^{(-\alpha,\beta)}(1)=0,$   we find from \eqref{RL=Ca} that
$D_+^s$ can be replaced by ${^C}\hspace*{-2pt}D_+^s.$  Accordingly,  \eqref{adjointA} follows from
\eqref{SLprobJGF+} immediately.

 We now show the fractional integration by parts can get through.  By \eqref{newbatemanam} and \eqref{tempA},
 \begin{equation}\label{salphas}
 I_-^{1-s}\big\{\omega^{(-\alpha+s,\beta+s)}\, {^C}\hspace*{-2pt}D_+^s\,{}^+{\hspace*{-3pt}}J_{n}^{(-\alpha,\beta)}\big\}=\tilde d_{n,s}^{\alpha,\beta}  \, (1+x)^{\beta+1}P_n^{(\alpha-1,\beta+1)}(x),
 \end{equation}
 where the constant $\tilde d_{n,s}^{\alpha,\beta}$ can be worked out.  Clearly, it vanishes at $x=-1.$ On the other
 hand,  ${}^+{\hspace*{-3pt}}J_{m}^{(-\alpha,\beta)}(1)=0.$  Therefore, we can perform
 the rule \eqref{left-byparts} to obtain the second identity in \eqref{adjointpropA}.  The orthogonality follows from  \eqref{basicorth} and \eqref{adjointA}.

 The results for ${}^-{\hspace*{-3pt}}J_{n}^{(\alpha,-\beta)}(x)$ can be derived similarly.
\end{proof}

\subsection{Relation with Jacobi poly-fractonomials} In a very recent paper,
 Zayernouri and Karniadakis
\cite{zayernouri2013fractional} introduced a family of Jacobi poly-fractonomials (JPFs) from  the eigenfunctions of a singular  factional Sturm-Liouville problem. We first recall their definition.
\begin{defn}[\bf Jacobi poly-fractonomials \cite{zayernouri2013fractional}]\label{JPFss} {\em For $\mu\in (0,1), $  the Jacobi poly-fractonomials of order $\mu$ are defined as follows.
\begin{itemize}
\item For $-1<\alpha<2-\mu$ and $-1<\beta<\mu-1,$
\begin{equation}\label{JPFs-}
\begin{split}
& {^{(1)}\hspace*{-1pt}{\mathcal P}_n^{(\alpha,\beta,\mu)}}(x)=(1+x)^{\mu-(\beta+1)}{P}_{n-1}^{(\alpha+1-\mu,\mu-(\beta+1))}(x),\quad n\ge 1.
\end{split}
\end{equation}
\item For $-1<\alpha<\mu-1$ and $-1<\beta<2-\mu,$
\begin{equation}\label{JPFs+}
  {^{(2)}\hspace*{-1pt}{\mathcal P}_n^{(\alpha,\beta,\mu)}}(x) =(1-x)^{\mu-(\alpha+1)}{P}_{n-1}^{(\mu-(\alpha+1),\beta+1-\mu)}(x), \quad n\ge 1.
\end{equation}

\end{itemize}
}
\end{defn}

As shown in \cite[Thm. 4.2]{zayernouri2013fractional},  the left JPFs are eigenfunctions of  the singular fractional Sturm-Liouville equation:
\begin{equation}\label{singularSL}
{D}^\mu_{+}\big\{\omega^{(\alpha+1,\beta+1)}(x) {^C}\hspace*{-2pt}D_-^\mu \{ {^{(1)}\hspace*{-1pt}{\mathcal P}_n^{(\alpha,\beta,\mu)}}(x) \} \big\}={^{(1)}}\hspace*{-2pt}\lambda_n^{(\alpha,\beta,\mu)}\,
\omega^{(\alpha+1-\mu,\beta+1-\mu)}(x) \,{^{(1)}\hspace*{-1pt}{\mathcal P}_n^{(\alpha,\beta,\mu)}}(x),
\end{equation}
where
$$ {^{(1)}}\hspace*{-2pt}\lambda_n^{(\alpha,\beta,\mu)}=\frac{\Gamma(n+\alpha+1)\Gamma(n+\mu-\beta-1)}{\Gamma(n-\beta-1)\Gamma(n-\mu+\alpha+1)},\quad n\ge 1.$$
The right JPFs  satisfy a similar  equation. 

It follows from \eqref{GJFs+}-\eqref{GJFs-} and  \eqref{JPFs-}-\eqref{JPFs+}  the  relation:
  \begin{equation}\label{relaKarnia}
  {^{(1)}\hspace*{-1pt}{\mathcal P}_n^{(\alpha,\beta,\mu)}}(x)={}^-{\hspace*{-3pt}}J_{n-1}^{(\alpha+1-\mu,\beta+1-\mu)}(x),\quad
  {^{(2)}\hspace*{-1pt}{\mathcal P}_n^{(\alpha,\beta,\mu)}}(x)={}^+{\hspace*{-3pt}}J_{n-1}^{(\alpha+1-\mu,\beta+1-\mu)}(x).
  \end{equation}
  Observe that with the parameters  $\{\mu,\alpha+1-\mu,\mu-(\beta+1)\}$ in place of
  $\{s,\alpha,\beta\}$ in \eqref{adjointB}, we obtain \eqref{singularSL} exactly.
  However,  the range of the parameters is $\alpha>-1$ and $-1<\beta<1-\mu,$ so the condition on $\alpha$ is relaxed as opposite to that for \eqref{JPFs-}. Indeed, the difference between  the range of  $\alpha$  is not surprising,  as  the GJFs here  and JPFs in
   \cite{zayernouri2013fractional}  are defined by different means.

\section{Approximation by  GJFs}\label{sect:appr}
\setcounter{equation}{0}
\setcounter{lmm}{0}
\setcounter{thm}{0}


The main concern of this section is to show that  approximation by  GJF series  leads to typical spectral convergence for functions in appropriate
weighted Sobolev spaces involving fractional derivatives.    Such approximation results play a crucial role in the  analysis of spectral methods for fractional
differential equations, see Section \ref{sect:appl}.

For simplicity  of presentation, we only provide the detailed analysis for  $\big\{{}^+ {\hspace*{-3pt}}J_{n}^{(-\alpha,\beta)}\big\},$ as the results  can be extended to  $\big\{{}^- {\hspace*{-3pt}}J_{n}^{(\alpha,-\beta)}\big\}$ straightforwardly, thanks to \eqref{parity2}.
In the first place, we highlight some special GJFs  of  particular  interest.
\begin{itemize}
 \item For $\alpha>0$ and $\beta\in {\mathbb R}$ (such that  $-(n+\alpha+\beta)\not \in {\mathbb N}$ and  $-\beta\not\in {\mathbb N}$),  we have
\begin{equation}\label{harmocondA}
D^l\,{}^+{\hspace*{-3pt}}J_{n}^{(-\alpha,\beta)}(1)=0, \quad 
{\rm for}\;\;  l=0,1,\cdots, [\alpha]-1,
\end{equation}
which naturally allows us to impose the one-sided boundary conditions:  $u^{(l)}(1)=0$  for $l=0,1,\cdots, [\alpha]-1,$ and more importantly,  it matches the singularity of the solution for prototypical  fractional initial value problems, thanks to the fractional factor  $(1-x)^{\alpha}.$  Moreover, we can choose the parameter $\beta$ (e.g., $\beta=-\alpha$)  so that under the GJF basis, the linear systems of the fractional equations can be sparse and well-conditioned.
\item For $\alpha>0$ and   $\beta=-[\alpha]$,   we find from \eqref{zerokpadd} that  for $n\ge [\alpha],$ 
\begin{equation}\label{harmocondB}
D^l\,{}^+{\hspace*{-3pt}}J_{n}^{(-\alpha,-[\alpha])}(\pm 1)=0,\quad {\rm for}\;\;\;  l=0,1,\cdots, [\alpha]-1,
\end{equation}
which allows us to deal with two-sided boundary conditions:  $u^{(l)}(\pm 1)=0,$  and to match the singularity  of the solution to some prototypical fractional boundary value problems.
\end{itemize}

We introduce some notation to be used later. Let ${\mathcal P}_N$ be the set of all algebraic (real-valued) polynomials  of degree at most $N.$ Let $\varpi(x)>0$ for all $x\in \Lambda,$ be a generic weight function.  The weighted space  $L^2_\varpi(\Lambda)$ is defined as in  Admas \cite{Adam75} with the inner product  and norm
$$(u,v)_\varpi=\int_\Lambda u(x) v(x)\varpi(x) dx,\quad \|u\|_\varpi=(u,u)_\varpi^{1/2}. $$
If $\varpi\equiv 1,$ we omit the weight in the notation.  In what follows, the Sobolev space $H^1(\Lambda)$ is also defined as usual.
\subsection{Approximation results for GJFs $\big\{{}^+ {\hspace*{-3pt}}J_{n}^{(-\alpha,\beta)}\big\}$}
In view of the applications that we have in mind,  we restrict the parameters to the set
\begin{equation}\label{salphacond}
{}^+\hspace*{-1pt}\Upsilon^{\alpha,\beta}:=\big\{(\alpha,\beta)\,:\, \alpha> 0,\;\;  \alpha+\beta>-1\big\},
\end{equation}
 which we  further split  into three disjoint subsets:
\begin{equation}\label{3subcase}
\begin{split}
&  {}^+\hspace*{-1pt} \Upsilon_1^{\alpha,\beta}:= \big\{(\alpha,\beta)\,:\, \alpha> 0,\; \beta>-1\big\};\\
& {}^+\hspace*{-1pt} \Upsilon_2^{\alpha,\beta}:= \big\{(\alpha,\beta)\,:\, \alpha> 0,\; -\alpha-1<\beta=-k\le -1,\;  k\in {\mathbb N}\big\};\\
& {}^+\hspace*{-1pt} \Upsilon_3^{\alpha,\beta}:= \big\{(\alpha,\beta)\,:\, \alpha> 0,\; -\alpha-1<\beta<-1,\;  -\beta\not\in {\mathbb N}\big\}.
\end{split}
\end{equation}

\subsubsection{Case I: $(\alpha,\beta)\in {}^+\hspace*{-1pt}\Upsilon_1^{\alpha,\beta}\cup {}^+\hspace*{-1pt}\Upsilon_2^{\alpha,\beta}$.} Let us first consider
$(\alpha,\beta)\in {}^+\hspace*{-1pt}\Upsilon_1^{\alpha,\beta}$. In this case, we define the finite-dimensional  fractional-polynomial space:
\begin{equation}\label{frac-polysps}
{}^+\hspace*{-2pt} {\mathcal F}_N^{(-\alpha,\beta)}(\Lambda)=\big\{\phi=(1-x)^\alpha \psi\,:\, \psi\in {\mathcal P}_N\big\}={\rm span}\big\{{}^+{\hspace*{-3pt}}J_{n}^{(-\alpha,\beta)}\,:\, 0\le n\le N \big\}.
\end{equation}
By the orthogonality  \eqref{basicorth},  we can expand any $u\in L^2_{\omega^{(-\alpha,\beta)}}(\Lambda)$ as
\begin{equation}\label{uexpansionA}
u(x)=\sum_{n=0}^\infty \hat u_n^{(\alpha,\beta)} \,  {}^+{\hspace*{-3pt}}J_{n}^{(-\alpha,\beta)}(x),\;\;  {\rm where}\;\;
\hat u_n^{(\alpha,\beta)}=\frac 1 {\gamma_n^{(\alpha,\beta)}} \int_{-1}^1 u \; {}^+{\hspace*{-3pt}}J_{n}^{(-\alpha,\beta)}\omega^{(-\alpha,\beta)}\, dx,
\end{equation}
and there holds the Parseval identity:
\begin{equation}\label{uexpansionB}
\|u\|_{{\omega^{(-\alpha,\beta)}}}^2=\sum_{n=0}^\infty  \gamma_n^{(\alpha,\beta)}\big|\hat u_n^{(\alpha,\beta)}\big|^2.
\end{equation}
Consider the $L^2_{\omega^{(-\alpha,\beta)}}$-orthogonal projection upon ${}^+\hspace*{-2pt} {\mathcal F}_N^{(-\alpha,\beta)}(\Lambda),$ defined by
\begin{equation}\label{projectorII3}
\big({}^+\hspace*{-2pt}\pi_N^{(-\alpha,\beta)}u-u,  v_N\big)_{\omega^{(-\alpha,\beta)}}=0,\quad \forall\,
v_N\in  {}^+\hspace*{-2pt} {\mathcal F}_N^{(-\alpha,\beta)}(\Lambda).
\end{equation}
By definition, we have
\begin{equation}\label{projectorIII}
({}^+\hspace*{-2pt}\pi_N^{(-\alpha,\beta)}u\big)(x)=\sum_{n=0}^N \hat u_n^{(\alpha,\beta)} \,  {}^+{\hspace*{-3pt}}J_{n}^{(-\alpha,\beta)}(x).
\end{equation}

We now consider  $(\alpha,\beta)\in {}^+\Upsilon_2^{\alpha,\beta}$.   In this case, we modify  \eqref{frac-polysps} as
\begin{equation}\label{frac-polyspsCase2}
{}^+\hspace*{-2pt} {\mathcal F}_N^{(-\alpha,-k)}(\Lambda)=
\big\{\phi=(1-x)^\alpha \psi\,:\, \psi\in {\mathcal P}_N\; \text{such that} \; \psi^{(l)}(-1)=0,\; 0\le l\le k-1\big\},
\end{equation}
which incorporates the homogeneous boundary conditions at $x=-1.$
Thanks to \eqref{zerokpadd}, we have
\begin{equation}\label{spanrela}
{}^+\hspace*{-2pt} {\mathcal F}_N^{(-\alpha,-k)}(\Lambda)={\rm span}\big\{{}^+{\hspace*{-3pt}}
J_{n}^{(-\alpha,-k)}(x)\,:\, k\le n\le N \big\}.
\end{equation}
In  view of the orthogonality \eqref{orthogonalityII},  we have the expansion like \eqref{uexpansionA}, that is,   for any $u\in L^2_{\omega^{(-\alpha,-k)}}(\Lambda),$
\begin{equation}\label{uexpansionAB}
u(x)=\sum_{n=k}^\infty \hat u_n^{(\alpha,-k)} \,  {}^+{\hspace*{-3pt}}J_{n}^{(-\alpha,-k)}(x),\;\;  {\rm where}\;\;
\hat u_n^{(\alpha,-k)}=\frac 1 {\gamma_n^{(\alpha,-k)}} \int_{-1}^1 u\;  {}^+{\hspace*{-3pt}}J_{n}^{(-\alpha,-k)}\omega^{(-\alpha,-k)}\, dx,
\end{equation}
so the identity \eqref{uexpansionB} also holds for this expansion.
The partial sum
\begin{equation}\label{pinuexp}
{}^+\hspace*{-2pt}\pi_N^{(-\alpha,-k)}u(x)=\sum_{ n=k}^{N} \hat u_{ n}^{(\alpha,-k)}\,
 {}^+{\hspace*{-3pt}}J_{n}^{(-\alpha,-k)} (x),
\end{equation}
is the  $L^2_{\omega^{(-\alpha,-k)}}$-orthogonal projection upon ${}^+\hspace*{-2pt} {\mathcal F}_N^{(-\alpha,-k)}(\Lambda),$ namely,
\begin{equation}\label{projectorII}
\big({}^+\hspace*{-2pt}\pi_N^{(-\alpha,-k)}u-u,  v_N\big)_{\omega^{(-\alpha,-k)}}=0,\quad \forall\,
v_N\in  {}^+\hspace*{-2pt} {\mathcal F}_N^{(-\alpha,-k)}(\Lambda).
\end{equation}
\begin{rem}\label{orthdev} It is worthwhile to point out that
 for  $(\alpha,\beta)\in {}^+\hspace*{-1pt}\Upsilon_1^{\alpha,\beta}\cup {}^+\hspace*{-1pt}\Upsilon_2^{\alpha,\beta},$
we have
\begin{equation}\label{projectorIV}
\big(D_+^{\alpha+l}({}^+\hspace*{-2pt}\pi_N^{(-\alpha,\beta)}u-u),  D^l w_N\big)_{\omega^{(l,\alpha+\beta+l)}}=0,\quad \forall\,
w_N\in  \mathcal P_N,
\end{equation}
for all $l\in {\mathbb N}_0.$
Notice that
$$ ({}^+\hspace*{-2pt}\pi_N^{(-\alpha,\beta)}u-u)(x)=  \sum_{n=N+1}^\infty \hat u_n^{(\alpha,\beta)} \,  {}^+{\hspace*{-3pt}}J_{n}^{(-\alpha,\beta)}(x), $$
and ${\mathcal P}_N={\rm span}\big\{P_n^{(0,\alpha+\beta)}\,:\,0\le n\le N\big\}.$  Using the property
$D^{\alpha+l}_+ =(-1)^lD^l D_+^\alpha,$  we  obtain  \eqref{projectorIV} from  \eqref{derivative+A}, \eqref{derimulti} and
the orthogonality of the classical Jacobi polynomials  (cf. \eqref{jcbiorth}).  \qed
\end{rem}

To characterize the regularity of $u,$ we introduce the non-uniformly weighted  space involving fractional derivatives:
\begin{equation}\label{BmspsA}
{}^+\hspace*{-2pt}{\mathcal B}^{m}_{\alpha,\beta} (\Lambda):=\big\{u\in L^2_{\omega^{(-\alpha,\beta)}}(\Lambda)\,:\, D^{\alpha+l}_+  u\in L_{\omega^{(l,\alpha+\beta+l)}}^2(\Lambda)\;\; {\rm for}\;\;
0\le l\le  m\big\},\;\;\; m\in {\mathbb N}_0.
\end{equation}
By \eqref{D+orthl} and \eqref{uexpansionA} or \eqref{uexpansionAB}, we have  that for $(\alpha,\beta)\in {}^+\hspace*{-1pt}\Upsilon_1^{\alpha,\beta}\cup {}^+\hspace*{-1pt}\Upsilon_2^{\alpha,\beta}$  and $l\in {\mathbb N}_0,$
\begin{equation}\label{Dsnorm2}
\begin{split}
&\big\| D_+^{\alpha+l} u  \big\|_{\omega^{(l,\alpha+\beta+l)}}^2=\sum_{n=\tilde l}^\infty  h_{n, \tilde l}^{(\alpha,\beta)}  \big|\hat u_{n}^{(\alpha,\beta)}\big|^2,
\end{split}
\end{equation}
where $\tilde l=l $  for  $(\alpha,\beta)\in {}^+\hspace*{-1pt}\Upsilon_1^{\alpha,\beta};$
$\tilde l=\max\{l,k\}$ for  $(\alpha,\beta)\in {}^+\hspace*{-1pt}\Upsilon_2^{\alpha,\beta}, $
and  $h_{n, \tilde l}^{(\alpha,\beta)}$  is defined in \eqref{constanth}.

Our main result on the projection errors  for these two cases is stated as follows.
\begin{thm}\label{Th3.1main}  Let $(\alpha,\beta)\in {}^+\hspace*{-1pt}\Upsilon_1^{\alpha,\beta}\cup {}^+ \hspace*{-1pt}\Upsilon_2^{\alpha,\beta}$, and let   $u\in {}^+\hspace*{-2pt}{\mathcal B}^m_{\alpha,\beta} (\Lambda) $ with $m\in {\mathbb N}_0$.
\begin{itemize}
\item For $0\le l\le m\le N,$
\begin{equation}\label{mainest0A}
\begin{split}
\big\|D_+^{\alpha+l}({}^+\hspace*{-2pt} \pi_N^{(-\alpha,\beta)} u-u)\big\|_{\omega^{(l,\alpha+\beta+l)}}
&\le N^{(l-m)/2}
\sqrt{\frac{(N-m+1)!}
{(N-l+1)!}}\,
\big\|D_+^{\alpha+m}  u\big\|_{\omega^{(m,\alpha+\beta+m)}}.
\end{split}
\end{equation}
In particular,  if $m$ is fixed, then
\begin{equation}\label{mainest0Aest}
\begin{split}
\big\|D_+^{\alpha+l}({}^+\hspace*{-2pt} \pi_N^{(-\alpha,\beta)} u-u)\big\|_{\omega^{(l,\alpha+\beta+l)}}
&\le c N^{l-m} \big\|D_+^{\alpha+m}  u\big\|_{\omega^{(m,\alpha+\beta+m)}}.
\end{split}
\end{equation}
\item For $0\le m\le N,$ we also have  the  $L^2_{\omega^{(-\alpha,\beta)}}$-estimates:
\begin{equation}\label{mainest0Anew}
\begin{split}
\big\|{}^+\hspace*{-2pt} \pi_N^{(-\alpha,\beta)} u-u\big\|_{\omega^{(-\alpha,\beta)}}\le
cN^{-\alpha}
\sqrt{\frac{(N-m+1)!}
{(N+m+1)!}}\,
\big\|D_+^{\alpha+m}  u\big\|_{\omega^{(m,\alpha+\beta+m)}}.
\end{split}
\end{equation}
In particular,  if $m$ is fixed, then
\begin{equation}\label{mainest0Anewfixed}
\begin{split}
\big\|{}^+\hspace*{-2pt} \pi_N^{(-\alpha,\beta)} u-u\big\|_{\omega^{(-\alpha,\beta)}}\le
cN^{-(\alpha+m)} \big\|D_+^{\alpha+m}  u\big\|_{\omega^{(m,\alpha+\beta+m)}}.
\end{split}
\end{equation}
\end{itemize}
Here,  $c\approx 1$ for $N\gg1$.
\end{thm}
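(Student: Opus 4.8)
The plan is to turn every norm on the left-hand side into a Parseval series in the GJF coefficients $\hat u_n^{(\alpha,\beta)}$, and then reduce each estimate to a purely algebraic comparison of the normalization constants appearing in \eqref{D+orthl} and \eqref{basicorth}. Since ${}^+\pi_N^{(-\alpha,\beta)}$ merely truncates the expansion \eqref{uexpansionA} (resp.\ \eqref{uexpansionAB}), the error is the tail $({}^+\pi_N^{(-\alpha,\beta)}u-u)(x)=\sum_{n>N}\hat u_n^{(\alpha,\beta)}\,{}^+\!J_n^{(-\alpha,\beta)}(x)$. Applying $D_+^{\alpha+l}$ and invoking the orthogonality \eqref{D+orthl} would give, exactly as in \eqref{Dsnorm2},
$$\big\|D_+^{\alpha+l}({}^+\pi_N^{(-\alpha,\beta)}u-u)\big\|_{\omega^{(l,\alpha+\beta+l)}}^2=\sum_{n=N+1}^\infty h_{n,l}^{(\alpha,\beta)}\,\big|\hat u_n^{(\alpha,\beta)}\big|^2.$$

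Next I would factor $h_{n,l}=(h_{n,l}/h_{n,m})\,h_{n,m}$ and pull the ratio out of the sum, obtaining
$$\sum_{n=N+1}^\infty h_{n,l}^{(\alpha,\beta)}\big|\hat u_n^{(\alpha,\beta)}\big|^2\le\Big(\sup_{n\ge N+1}\frac{h_{n,l}^{(\alpha,\beta)}}{h_{n,m}^{(\alpha,\beta)}}\Big)\big\|D_+^{\alpha+m}u\big\|_{\omega^{(m,\alpha+\beta+m)}}^2.$$
By \eqref{constanth} the order $l$ enters $h_{n,l}$ only through $\Gamma(n+\alpha+\beta+l+1)/(n-l)!$, so
$$\frac{h_{n,l}^{(\alpha,\beta)}}{h_{n,m}^{(\alpha,\beta)}}=\frac{(n-m)!}{(n-l)!}\,\frac{\Gamma(n+\alpha+\beta+l+1)}{\Gamma(n+\alpha+\beta+m+1)}.$$
A one-line check of the ratio of consecutive terms shows that, for $l\le m$, each of the two factors is decreasing in $n$; hence the supremum is attained at $n=N+1$ and equals $\frac{(N-m+1)!}{(N-l+1)!}\,\frac{\Gamma(N+\alpha+\beta+l+2)}{\Gamma(N+\alpha+\beta+m+2)}$. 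Stirling's formula converts the Gamma quotient into $N^{l-m}(1+o(1))$, which yields \eqref{mainest0A} with $c\approx1$; the simplified rate \eqref{mainest0Aest} then follows from $(N-m+1)!/(N-l+1)!\sim N^{l-m}$ for fixed $m$.

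For the $L^2_{\omega^{(-\alpha,\beta)}}$-bound \eqref{mainest0Anew} I would run the identical argument, this time using the plain orthogonality \eqref{basicorth} to write $\|{}^+\pi_N^{(-\alpha,\beta)}u-u\|_{\omega^{(-\alpha,\beta)}}^2=\sum_{n>N}\gamma_n^{(\alpha,\beta)}|\hat u_n^{(\alpha,\beta)}|^2$ and comparing $\gamma_n^{(\alpha,\beta)}$ against $h_{n,m}^{(\alpha,\beta)}$. After cancellation one gets
$$\frac{\gamma_n^{(\alpha,\beta)}}{h_{n,m}^{(\alpha,\beta)}}=\frac{\Gamma(n+\beta+1)\,(n-m)!}{\Gamma(n+\alpha+1)\,\Gamma(n+\alpha+\beta+m+1)},$$
and the consecutive-ratio test reduces monotonicity to $(n+\alpha+1)(n+\alpha+\beta+m+1)\ge(n+\beta+1)(n+1-m)$, whose leading term in $n$ is $2(\alpha+m)n>0$ under \eqref{salphacond}. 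Thus the supremum over the tail is again at $n=N+1$, and a final application of Stirling turns its value into $c\,N^{-2\alpha}(N-m+1)!/(N+m+1)!$, giving \eqref{mainest0Anew} and then \eqref{mainest0Anewfixed}.

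The step I expect to be the main obstacle is not any single estimate but two points of care. The first is the index bookkeeping when $(\alpha,\beta)\in{}^+\Upsilon_2^{\alpha,\beta}$: there the expansion \eqref{uexpansionAB} begins at $n=k$ and the effective order in \eqref{Dsnorm2} is $\tilde l=\max\{l,k\}$, so I must verify that for $m\le N$ the tail indices $n\ge N+1$ all exceed $k$ and that none of the constant comparisons are disturbed. The second is the extraction of the sharp constant $c\approx1$, which requires the two-sided Stirling bounds on the Gamma quotients rather than the bare asymptotic equivalence that suffices for the fixed-$m$ rates \eqref{mainest0Aest} and \eqref{mainest0Anewfixed}.
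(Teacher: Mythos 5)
Your proposal is correct and follows essentially the same route as the paper: truncation of the GJF expansion, the Parseval identities \eqref{Dsnorm2} and \eqref{uexpansionB}, comparison of the constants $h_{n,l}^{(\alpha,\beta)}$, $h_{n,m}^{(\alpha,\beta)}$ and $\gamma_n^{(\alpha,\beta)}$ at the tail, and the Gamma-ratio/Stirling bound \eqref{Gammaratio} for the fixed-$m$ rates. The only (welcome) addition is that you verify explicitly, via the consecutive-ratio test, that the constant ratios are decreasing in $n$ so the supremum over the tail sits at $n=N+1$ --- a step the paper takes for granted when it bounds the sum by the ratio evaluated at $N+1$.
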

\begin{proof}
 By \eqref{uexpansionA} (or \eqref{uexpansionAB}),  \eqref{projectorII3} (or \eqref{projectorII})  and
\eqref{Dsnorm2},
\begin{equation}\label{pfwang1}
\begin{split}
\big\|D_+^{\alpha+l}({}^+\hspace*{-2pt} \pi_N^{(-\alpha,\beta)} u-u)\big\|_{\omega^{(l,\alpha+\beta+l)}}^2&=\sum_{n=N+1}^\infty  h_{n, l}^{(\alpha,\beta)}  \big|\hat u_{n}^{(\alpha,\beta)}\big|^2 =    \sum_{n=N+1}^\infty   \frac{h_{n,l}^{(\alpha,\beta)}}{h_{n,m}^{(\alpha,\beta)}} h_{n,m}^{(\alpha,\beta)}\, \big|\hat u_{n}^{(\alpha,\beta)}\big|^2
\\&  \le  \frac{h_{N+1,l}^{(\alpha,\beta)}}
{h_{N+1,m}^{(\alpha,\beta)}}\big\|D_+^{\alpha+m}  u\big\|_{\omega^{(m,\alpha+\beta+m)}}^2.
\end{split}
\end{equation}
We now estimate the  constant factor.
 By  \eqref{anotation}, \eqref{constanth} and a direct calculation, we find that for   $0\le l\le m\le N,$
\begin{equation}\label{pfwang2}
\begin{split}
 \frac{h_{N+1,l}^{(\alpha,\beta)}}
{h_{N+1,m}^{(\alpha,\beta)}}& = \frac{\Gamma(N+\alpha+\beta+l+2)} {\Gamma(N+\alpha+\beta+m+2)} \frac{(N-m+1)!} {(N-l+1)!}\\
&=\frac{1} {(N+\alpha+\beta+2+l)\cdots (N+\alpha+\beta+1+m)} \frac{(N-m+1)!} {(N-l+1)!}\\
&\le N^{l-m}  \frac{(N-m+1)!} {(N-l+1)!},
\end{split}
\end{equation}
where we used the fact: $\alpha+\beta>-1.$ Therefore, the estimate  \eqref{mainest0A} follows from
\eqref{pfwang1}-\eqref{pfwang2} immediately.

 We now   turn to \eqref{mainest0Aest}.  Let us recall the property of the Gamma function (see \cite[(6.1.38)]{Abr.I64}):
\begin{equation}\label{stirlingfor}
\Gamma(x+1)=\sqrt{2\pi}
x^{x+1/2}\exp\Big(-x+\frac{\theta}{12x}\Big),\quad \forall\,
x>0,\;\;0<\theta<1.
\end{equation}
We can show  that   for any constant   $a, b\in {\mathbb R},$   $n\in {\mathbb N}, $  $n+a>1$ and $n+b>1$ (see  \cite[Lemma 2.1]{ZhWX13}),
\begin{equation}\label{Gammaratio}
\frac{\Gamma(n+a)}{\Gamma(n+b)}\le \nu_n^{a,b} n^{a-b},
\end{equation}
where
\begin{equation}\label{ConstUpsilon}
\nu_n^{a,b}=\exp\Big(\frac{a-b}{2(n+b-1)}+\frac{1}{12(n+a-1)}+\frac{(a-b)^2}{n}\Big).
\end{equation}
 Using  the property $\Gamma(n+1)=n!$ and \eqref{Gammaratio}, we find that for $m\le N,$
\begin{equation}\label{ratiovalue2}
\begin{split}
\frac{(N-m+1)!} {(N-l+1)!}
&\le \nu_{N}^{2-m,2-l} N^{l-m},
\end{split}
\end{equation}
where $\nu_{N}^{2-m,2-l}\approx 1$ for  fixed $m$ and $N\gg 1.$ Thus, we obtain \eqref{mainest0Aest}
from   \eqref{mainest0A}  immediately.

The $L^2_{\omega^{(-\alpha,\beta)}}$-estimates can be obtained by using the same argument. We sketch  the derivation below.    By \eqref{uexpansionB}  and
\eqref{Dsnorm2},
\begin{equation}\label{pfwang1e}
\begin{split}
 \big\|{}^+ \hspace*{-2pt}\pi_N^{(-\alpha,\beta)} u &-u\big\|_{\omega^{(-\alpha,\beta)}}^2=
\sum_{n=N+1}^\infty  \gamma_n^{(\alpha,\beta)}  \, \big| \hat u_{n}^{(\alpha,\beta)}\big|^2 \\
& \le \frac{\gamma_{N+1}^{(\alpha,\beta)}}
{h_{N+1,m}^{(\alpha,\beta)}}    \sum_{n=N+1}^\infty   h_{n,m}^{(\alpha,\beta)} \, \big|\hat u_{n}^{(\alpha,\beta)}\big|^2
 \le  \frac{\gamma_{N+1}^{(\alpha,\beta)}}
{h_{N+1,m}^{(\alpha,\beta)}}\big\|D_+^{\alpha+m}  u\big\|_{\omega^{(m,\alpha+\beta+m)}}^2.
\end{split}
\end{equation}
Working out the constants by \eqref{co-gamma} and \eqref{constanth}, we use  \eqref{Gammaratio} again to get that
\begin{equation}\label{ratiovalue2e}
\begin{split}
\frac{\gamma_{N+1}^{(\alpha,\beta)}}
{h_{N+1,m}^{(\alpha,\beta)}}&=\frac{\Gamma(N+\beta+2)} {\Gamma(N+\alpha+2)}
\frac{\Gamma(N+m+2)} {\Gamma(N+\alpha+\beta+m+2)}\frac{(N-m+1)!}{(N+m+1)!}\\
&\le \nu_{N}^{\alpha+2,\beta+2} N^{\beta-\alpha} \;  \nu^{2,\alpha+\beta+2}_N (N+m)^{-(\alpha+\beta)}  \frac{(N-m+1)!}{(N+m+1)!}\\
&\le c N^{-2(\alpha+m)}\;\; \text{(if  $m$ is fixed)}.
\end{split}
\end{equation}
This ends the proof.
 \end{proof}

\begin{rem}\label{remoptimal} We see from  the  above estimates  that
optimal order of convergence can be attained for approximation of $u$ by
 its orthogonal projection ${}^+ \hspace*{-2pt}\pi_N^{(-\alpha,\beta)} u$ in  both
 $L^2_{\omega^{(-\alpha,\beta)}}(\Lambda)$ and ${}^+\hspace*{-2pt}{\mathcal B}^l_{\alpha,\beta} (\Lambda),$
 when $u$ belongs to a properly weighted space involving  proper orders of fractional derivatives.
 \qed
 \end{rem}

\subsubsection{Case II: $(\alpha,\beta)\in {}^+\hspace*{-1pt}\Upsilon_3^{\alpha,\beta}$.}  In this case, the main difficulty resides in  that the GJFs $\{{}^+{\hspace*{-3pt}}J_{n}^{(-\alpha,\beta)}\}$ are no longer  orthogonal on $\Lambda$.  Thus, we adopt  a  different route  to derive the approximation results.
For any $u$ such that $D_+^\alpha u\in L^2_{\omega^{(0,\alpha+\beta)}}(\Lambda),$ it admits the following  unique expansion:
\begin{equation}\label{D+exp}
D_+^\alpha u(x)=\sum_{n=0}^\infty  \hat v_n^{(0,\alpha+\beta)}  P_n^{(0,\alpha+\beta)}(x),
\end{equation}
where by \eqref{jcbiorth},
\begin{equation}\label{casesA}
\hat v_n^{(0,\alpha+\beta)}=
\frac 1 {\gamma_n^{(0,\alpha+\beta)}} \int_{-1}^1 D_+^\alpha u(x)\;  P_n^{(0,\alpha+\beta)}(x) \, (1+x)^{\alpha+\beta}\, dx.
\end{equation}
From the definition of the  usual orthogonal projection operator $\Pi_N^{(0,\alpha+\beta)}:\;L^2_{\omega^{(0,\alpha+\beta)}}(\Lambda) \to {\mathcal P}_N $, we have
\begin{equation}\label{D+exp2proj}
\big(\Pi_N^{(0,\alpha+\beta)}(D_+^\alpha u)-D^\alpha_+ u, v_N\big)_{\omega^{(0,\alpha+\beta)}}=0,\quad \forall\,v_N\in {\mathcal P}_N,
\end{equation}
and
\begin{equation}\label{D+exp2a}
\Pi_N^{(0,\alpha+\beta)}(D_+^\alpha u)(x)=\sum_{n=0}^N  \hat v_n^{(0,\alpha+\beta)}  P_n^{(0,\alpha+\beta)}(x).
\end{equation}
Let ${}^+\hspace*{-2pt} {\mathcal F}_N^{(-\alpha,\beta)}(\Lambda)$ be the finite-dimensional space as defined in   \eqref{frac-polysps} but for  $(\alpha,\beta)\in \Upsilon_3^{\alpha,\beta}$.

\begin{lmm}\label{ulem} Let $\alpha>0, -\alpha-1<\beta<-1$ and $-\beta\not \in {\mathbb N}.$  For   any $u$ such that $D_+^\alpha u\in L^2_{\omega^{(0,\alpha+\beta)}}(\Lambda),$ there exists a unique $ u_N=:{}^+\hspace*{-2pt}\pi_N^{(-\alpha,\beta)}u\in  {}^+\hspace*{-2pt} {\mathcal F}_N^{(-\alpha,\beta)}(\Lambda)$ such that
\begin{equation}\label{unqiueA}
\Pi_N^{(0,\alpha+\beta)}(D_+^\alpha u)(x)=D_+^\alpha ({}^+\hspace*{-2pt}\pi_N^{(-\alpha,\beta)}u)(x),
\end{equation}
and
\begin{equation}\label{D+exp2projB}
\big(D_+^\alpha ({}^+\hspace*{-2pt}\pi_N^{(-\alpha,\beta)}u-u), v_N\big)_{\omega^{(0,\alpha+\beta)}}=0,\quad \forall\,v_N\in {\mathcal P}_N.
\end{equation}
\end{lmm}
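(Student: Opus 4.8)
The plan is to exploit the fact that, although the GJFs $\{{}^+{\hspace*{-3pt}}J_{n}^{(-\alpha,\beta)}\}$ lose their orthogonality in this case, the operator $D_+^\alpha$ still carries them \emph{exactly} onto scalar multiples of the orthogonal Jacobi polynomials $\{P_n^{(0,\alpha+\beta)}\}$ via \eqref{derivative+A}. Since $\alpha+\beta>-1$ by the definition of ${}^+\hspace*{-1pt}\Upsilon_3^{\alpha,\beta}$, these polynomials form a complete orthogonal system in $L^2_{\omega^{(0,\alpha+\beta)}}(\Lambda)$, so the whole problem can be transferred to the well-understood setting of the classical projection $\Pi_N^{(0,\alpha+\beta)}$. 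First I would observe that the hypothesis $\alpha+\beta>-1$ forces $-(n+\alpha+\beta)\not\in{\mathbb N}$ for every $n\ge1$, so condition \eqref{conditions} holds automatically and each $P_n^{(\alpha,\beta)}$ has exact degree $n$; consequently $\{{}^+{\hspace*{-3pt}}J_{n}^{(-\alpha,\beta)}=(1-x)^\alpha P_n^{(\alpha,\beta)}:0\le n\le N\}$ is a basis of ${}^+\hspace*{-2pt} {\mathcal F}_N^{(-\alpha,\beta)}(\Lambda)$.

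Next I would construct the candidate directly. Expanding $D_+^\alpha u$ as in \eqref{D+exp} with coefficients \eqref{casesA}, I set
\begin{equation*}
{}^+\hspace*{-2pt}\pi_N^{(-\alpha,\beta)}u(x):=\sum_{n=0}^N \hat v_n^{(0,\alpha+\beta)}\,\frac{n!}{\Gamma(n+\alpha+1)}\,{}^+{\hspace*{-3pt}}J_{n}^{(-\alpha,\beta)}(x).
\end{equation*}
This lies in ${}^+\hspace*{-2pt} {\mathcal F}_N^{(-\alpha,\beta)}(\Lambda)$ by the basis property above, and applying \eqref{derivative+A} term by term cancels the ratio $\Gamma(n+\alpha+1)/n!$ to give $D_+^\alpha({}^+\hspace*{-2pt}\pi_N^{(-\alpha,\beta)}u)=\sum_{n=0}^N \hat v_n^{(0,\alpha+\beta)}P_n^{(0,\alpha+\beta)}=\Pi_N^{(0,\alpha+\beta)}(D_+^\alpha u)$, which is exactly \eqref{unqiueA}.

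For uniqueness I would argue that $D_+^\alpha$ is injective on the $(N+1)$-dimensional space ${}^+\hspace*{-2pt} {\mathcal F}_N^{(-\alpha,\beta)}(\Lambda)$: by \eqref{derivative+A} it sends the basis to $\{\Gamma(n+\alpha+1)/n!\,P_n^{(0,\alpha+\beta)}\}$, a linearly independent family with nonzero multipliers, so $D_+^\alpha$ of a nontrivial element cannot vanish and any two solutions of \eqref{unqiueA} must coincide. Finally, \eqref{D+exp2projB} is immediate: substituting \eqref{unqiueA} yields $(D_+^\alpha({}^+\hspace*{-2pt}\pi_N^{(-\alpha,\beta)}u-u),v_N)_{\omega^{(0,\alpha+\beta)}}=(\Pi_N^{(0,\alpha+\beta)}(D_+^\alpha u)-D_+^\alpha u,v_N)_{\omega^{(0,\alpha+\beta)}}=0$ for all $v_N\in{\mathcal P}_N$ by the defining property \eqref{D+exp2proj} of the orthogonal projector. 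The only genuine obstacle is the basis/bijectivity claim, namely that $D_+^\alpha$ carries ${}^+\hspace*{-2pt} {\mathcal F}_N^{(-\alpha,\beta)}(\Lambda)$ isomorphically onto ${\mathcal P}_N$; once \eqref{derivative+A} and the exact-degree property of $P_n^{(\alpha,\beta)}$ are in hand, everything else is bookkeeping.
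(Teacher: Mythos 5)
Your proposal is correct and follows essentially the same route as the paper: you construct the identical candidate $u_N=\sum_{n=0}^N \frac{n!\,\hat v_n^{(0,\alpha+\beta)}}{\Gamma(n+\alpha+1)}\,{}^+{\hspace*{-3pt}}J_{n}^{(-\alpha,\beta)}$, verify \eqref{unqiueA} by applying \eqref{derivative+A} termwise, and deduce \eqref{D+exp2projB} from \eqref{D+exp2proj}. Your explicit remarks on the exact-degree/basis property and the injectivity of $D_+^\alpha$ on ${}^+\hspace*{-2pt}{\mathcal F}_N^{(-\alpha,\beta)}(\Lambda)$ merely spell out what the paper leaves implicit in its appeal to the uniqueness of the expansion \eqref{casesA}.
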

\begin{proof}  From  the expansion coefficients $\big\{ \hat v_n^{(0,\alpha+\beta)}\big\}_{n=0}^N$  in \eqref{casesA}, we construct
\begin{equation}\label{pNexp}
u_N(x)=\sum_{n=0}^N  \frac{n!\,  \hat v_n^{(0,\alpha+\beta)}}{\Gamma(n+\alpha+1)}  {}^+{\hspace*{-3pt}}J_{n}^{(-\alpha,\beta)}(x)\in  {}^+\hspace*{-2pt} {\mathcal F}_N^{(-\alpha,\beta)}(\Lambda).
\end{equation}
Acting $D_+^\alpha$  on both sides, we obtain from  \eqref{derivative+A} and \eqref{D+exp2a} that
$$
D_+^\alpha u_N(x)= \sum_{n=0}^N  \hat v_n^{(0,\alpha+\beta)}  P_n^{(0,\alpha+\beta)}(x)=\Pi_N^{(0,\alpha+\beta)} (D_+^\alpha u)(x).
$$
Note that the expansion in  \eqref{casesA} is unique, so we specifically denote $u_N$ by ${}^+\hspace*{-2pt}\pi_N^{(-\alpha,\beta)}u,$ and \eqref{unqiueA} is shown.

The property  \eqref{D+exp2projB} is a direct consequence of \eqref{D+exp2proj} and \eqref{unqiueA}.
\end{proof}

With Lemma \ref{ulem} at our disposal, we can obtain the following error estimates.
\begin{thm}\label{Th3.3main}  Let $\alpha>0, -\alpha-1<\beta<-1$ and $-\beta\not \in {\mathbb N},$ and let  ${}^+\hspace*{-1pt}\pi_N^{(-\alpha,\beta)}$ be defined
in Lemma {\rm \ref{ulem}}.  Suppose that $D_+^{\alpha+l} u\in L^2_{\omega^{(l,\alpha+\beta+l)}}(\Lambda)$ with $0\le l\le m\le N.$ Then we have
\begin{equation}\label{newest}
\begin{split}
\big\|D_+^\alpha ({}^+\hspace*{-2pt}\pi_N^{(-\alpha,\beta)}u-u)\big\|_{\omega^{(0,\alpha+\beta)}}
& \le
N^{-m/2}\sqrt{\frac{(N-m+1)!}{(N+1)!}} \big\|D_+^{\alpha+m} u\big\|_{\omega^{(m,\alpha+\beta+m)}}.
\end{split}
\end{equation}
In particular, if $m$ is fixed, we have
\begin{equation}\label{newest2}
\begin{split}
\big\|D_+^\alpha ({}^+\hspace*{-2pt}\pi_N^{(-\alpha,\beta)}u-u)\big\|_{\omega^{(0,\alpha+\beta)}}
&\le cN^{-m} \big\|D_+^{\alpha+m} u\big\|_{\omega^{(m,\alpha+\beta+m)}},
\end{split}
\end{equation}
where  the constant $c\approx 1$ for $N\gg 1.$
\end{thm}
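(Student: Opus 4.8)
The plan is to lean entirely on Lemma \ref{ulem}, which is what makes Case II tractable despite the GJFs $\{{}^+{\hspace*{-3pt}}J_{n}^{(-\alpha,\beta)}\}$ losing their orthogonality. Writing $\hat v_n:=\hat v_n^{(0,\alpha+\beta)}$, I would combine \eqref{unqiueA} and \eqref{D+exp2a} to get $D_+^\alpha({}^+\hspace*{-2pt}\pi_N^{(-\alpha,\beta)}u)=\sum_{n=0}^N\hat v_nP_n^{(0,\alpha+\beta)}$, while the expansion \eqref{D+exp} gives $D_+^\alpha u=\sum_{n=0}^\infty\hat v_nP_n^{(0,\alpha+\beta)}$. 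Subtracting identifies the fractional-derivative error with a purely \emph{classical} Jacobi truncation error,
\[
D_+^\alpha\big({}^+\hspace*{-2pt}\pi_N^{(-\alpha,\beta)}u-u\big)=-\sum_{n=N+1}^\infty\hat v_n\,P_n^{(0,\alpha+\beta)},
\]
so that by the orthogonality \eqref{jcbiorth},
\[
\big\|D_+^\alpha\big({}^+\hspace*{-2pt}\pi_N^{(-\alpha,\beta)}u-u\big)\big\|_{\omega^{(0,\alpha+\beta)}}^2=\sum_{n=N+1}^\infty\gamma_n^{(0,\alpha+\beta)}\,|\hat v_n|^2.
\]
This recasts the whole estimate as a standard weighted Jacobi approximation bound on the tail of $D_+^\alpha u$, and is where all the difficulty peculiar to the non-orthogonal case has already been absorbed by Lemma \ref{ulem}.

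Next I would translate the regularity hypothesis into a weighted bound on the \emph{same} coefficients $\{\hat v_n\}$. Using $D_+^{\alpha+l}=(-1)^lD^lD_+^\alpha$ together with the derivative rule \eqref{derimulti}, term-by-term differentiation of \eqref{D+exp} yields $D_+^{\alpha+l}u=(-1)^l\sum_{n\ge l}\hat v_n\kappa_{n,l}^{(0,\alpha+\beta)}P_{n-l}^{(l,\alpha+\beta+l)}$, so that again by \eqref{jcbiorth},
\[
\big\|D_+^{\alpha+l}u\big\|_{\omega^{(l,\alpha+\beta+l)}}^2=\sum_{n\ge l}A_n^{(l)}\,|\hat v_n|^2,\qquad A_n^{(l)}:=\big(\kappa_{n,l}^{(0,\alpha+\beta)}\big)^2\gamma_{n-l}^{(l,\alpha+\beta+l)}.
\]
With both sides now expressed through $\{\hat v_n\}$, the argument follows the same factoring device used for Theorem \ref{Th3.1main}: insert $A_n^{(m)}/A_n^{(m)}$ into the tail sum and extract the maximal ratio, noting $N+1>m$, to obtain
\[
\sum_{n=N+1}^\infty\gamma_n^{(0,\alpha+\beta)}\,|\hat v_n|^2\le\frac{\gamma_{N+1}^{(0,\alpha+\beta)}}{A_{N+1}^{(m)}}\sum_{n=N+1}^\infty A_n^{(m)}\,|\hat v_n|^2\le\frac{\gamma_{N+1}^{(0,\alpha+\beta)}}{A_{N+1}^{(m)}}\big\|D_+^{\alpha+m}u\big\|_{\omega^{(m,\alpha+\beta+m)}}^2,
\]
which is legitimate once I verify that $\gamma_n^{(0,\alpha+\beta)}/A_n^{(m)}$ is non-increasing for $n\ge N+1$.

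Finally I would compute the constant. Simplifying with \eqref{co-gamma}, \eqref{derimulti} and $\Gamma(n+1)=n!$ produces the clean formula $\gamma_n^{(0,\alpha+\beta)}/A_n^{(m)}=\frac{(n-m)!}{n!}\frac{\Gamma(n+\alpha+\beta+1)}{\Gamma(n+\alpha+\beta+m+1)}$, whose two factors are each decreasing in $n$ (the second because $\alpha+\beta>-1$ keeps all factors positive), settling the monotonicity needed above; evaluating at $n=N+1$ gives $\frac{(N-m+1)!}{(N+1)!}\frac{\Gamma(N+\alpha+\beta+2)}{\Gamma(N+\alpha+\beta+m+2)}$. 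Since $\alpha+\beta>-1$, each of the $m$ factors in that Gamma ratio exceeds $N$, so the ratio is at most $N^{-m}$, and taking square roots yields exactly \eqref{newest}. The fixed-$m$ bound \eqref{newest2} then follows by applying the Stirling-type inequality \eqref{Gammaratio} to $\frac{(N-m+1)!}{(N+1)!}=\Gamma(N+2-m)/\Gamma(N+2)$, whose constant $\nu_N^{2-m,2}\approx1$ for fixed $m$ and $N\gg1$. The only genuinely new step is the reduction in the first paragraph; once the problem is recast as a classical weighted Jacobi projection estimate, the remaining computation duplicates that of Theorem \ref{Th3.1main}, so I do not anticipate any real obstacle beyond bookkeeping of the constants.
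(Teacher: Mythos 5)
Your proposal is correct and follows essentially the same route as the paper's own proof: both reduce the error via Lemma \ref{ulem} to the classical Jacobi truncation error of $D_+^\alpha u$, express $\big\|D_+^{\alpha+m}u\big\|_{\omega^{(m,\alpha+\beta+m)}}^2$ through the same coefficients (your $A_n^{(m)}$ is exactly the paper's $\mu_{n,m}^{(0,\alpha+\beta)}$ in \eqref{muconst}), extract the ratio $\gamma_{N+1}^{(0,\alpha+\beta)}/\mu_{N+1,m}^{(0,\alpha+\beta)}$, and bound it by $N^{-m}(N+1-m)!/(N+1)!$ using $\alpha+\beta>-1$, finishing with \eqref{Gammaratio} for fixed $m$. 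Your explicit verification that $\gamma_n^{(0,\alpha+\beta)}/A_n^{(m)}$ is non-increasing is a welcome addition that the paper leaves implicit.
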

\begin{proof} Using the relation \eqref{unqiueA}, we further derive from \eqref{jcbiorth},  \eqref{D+exp} and  \eqref{D+exp2a} that
\begin{equation}\label{D+estform}
\begin{split}
\big\|D_+^\alpha ({}^+\hspace*{-2pt}\pi_N^{(-\alpha,\beta)}u-u)\big\|_{\omega^{(0,\alpha+\beta)}}^2&=\big\|\Pi_N^{(0,\alpha+\beta)}(D_+^\alpha u)- D_+^\alpha u\big\|_{\omega^{(0,\alpha+\beta)}}^2\\&=\sum_{n=N+1}^\infty \gamma_n^{(0,\alpha+\beta)} \big|\hat v_n^{(0,\alpha+\beta)}\big|^2.
\end{split}
\end{equation}
We find from \eqref{jcbiorth}, \eqref{derimulti} and \eqref{D+exp}  that
\begin{equation}\label{anothorth}
\big\|D_+^{\alpha+m} u\big\|^2_{\omega^{(m,\alpha+\beta+m)}}=\sum_{n=m}^\infty \mu_{n,m}^{(0,\alpha+\beta)} \big|\hat v_n^{(0,\alpha+\beta)}\big|^2,
\end{equation}
where for $n\ge m, $
\begin{equation}\label{muconst}
\mu_{n,m}^{(0,\alpha+\beta)}= \big(\kappa_{n,m}^{(0,\alpha+\beta)} \big)^2 \gamma_{n-m}^{(m,\alpha+\beta+m)}= \frac{2^{\alpha+\beta+1} n!\, \Gamma(n+\alpha+\beta+m+1)}
{(2n+\alpha+\beta+1)\,(n-m)!\, \Gamma(n+\alpha+\beta+1)}.
\end{equation}
In view of the above facts, we  work out the constants by using \eqref{co-gamma} and obtain
\begin{equation}\label{D+estform3}
\begin{split}
 \big\|D_+^\alpha  ({}^+\hspace*{-2pt}\pi_N^{(-\alpha,\beta)}u &-u)\big\|_{\omega^{(0,\alpha+\beta)}}^2
\le \frac{\gamma_{N+1}^{(0,\alpha+\beta)}}{\mu_{N+1,m}^{(0,\alpha+\beta)} }  \big\|D_+^{\alpha+m} u\big\|^2_{\omega^{(m,\alpha+\beta+m)}}\\
& =\frac 1 {(N+\alpha+\beta+2)_m} \frac{(N+1-m)!} {(N+1)!} \big\|D_+^{\alpha+m} u\big\|^2_{\omega^{(m,\alpha+\beta+m)}}\\
& \le N^{-m}  \frac{(N+1-m)!} {(N+1)!} \big\|D_+^{\alpha+m} u\big\|^2_{\omega^{(m,\alpha+\beta+m)}}.
\end{split}
\end{equation}
This yields \eqref{newest}.  For fixed $m,$  we apply \eqref{Gammaratio} to deal with  the above factorials and derive  \eqref{newest2} immediately.
\end{proof}

\subsection{Approximation results for GJFs $\big\{{}^- {\hspace*{-3pt}}J_{n}^{(\alpha,-\beta)}\big\}$}
 The results  established in the previous subsection can be extended to  $\big\{{}^- {\hspace*{-3pt}}J_{n}^{(\alpha,-\beta)}\big\}$ straightforwardly, thanks to \eqref{parity2}. Below, we sketch  the corresponding notation and results.

 Define the parameter  set
\begin{equation}\label{salphacondB}
{}^-\hspace*{-1pt}\Upsilon^{\alpha,\beta}:=\big\{(\alpha,\beta)\,:\, \beta> 0,\;\;  \alpha+\beta>-1\big\},
\end{equation}
 which we   split into three disjoint subsets:
\begin{equation}\label{3subcaseB}
\begin{split}
&  {}^-\hspace*{-1pt}\Upsilon_1^{\alpha,\beta}:= \big\{(\alpha,\beta)\,:\, \beta> 0,\; \alpha>-1\big\};\\
& {}^-\hspace*{-1pt}\Upsilon_2^{\alpha,\beta}:= \big\{(\alpha,\beta)\,:\, \beta> 0,\; -\beta-1<\alpha=-k\le -1,\;  k\in {\mathbb N}\big\};\\
& {}^-\hspace*{-1pt}\Upsilon_3^{\alpha,\beta}:= \big\{(\alpha,\beta)\,:\, \beta> 0,\; -\beta-1<\alpha<-1,\;  -\alpha\not\in {\mathbb N}\big\}.
\end{split}
\end{equation}

Consider the $L^2_{\omega^{(\alpha,-\beta)}}$-orthogonal projection: ${}^-\hspace*{-2pt}\pi_N^{(\alpha,-\beta)}u\in {}^-\hspace*{-2pt} {\mathcal F}_N^{(\alpha,-\beta)}(\Lambda)$
for $(\alpha,\beta)\in {}^-\hspace*{-1pt}\Upsilon_1^{\alpha,\beta}\cup {}^-\hspace*{-1pt}\Upsilon_2^{\alpha,\beta},$
where the notation is defined in a fashion similar to that in the previous subsection.    In this context, we define
\begin{equation}\label{BmspsB}
{}^-\hspace*{-2pt}{\mathcal B}^{m}_{\alpha,\beta} (\Lambda):=\big\{u\in L^2_{\omega^{(\alpha,-\beta)}}(\Lambda)\,:\, D^{\beta+l}_-  u\in L_{\omega^{(\alpha+\beta+l,l)}}^2(\Lambda)\;\; {\rm for}\;\;
0\le l\le  m\big\},\quad m\in {\mathbb N}_0.
\end{equation}
Following the argument as in the proof of Theorem \ref{Th3.1main}, we can derive the following error  estimates.
\begin{thm}\label{Thm3.2} Let $(\alpha,\beta)\in {}^-\Upsilon_1^{\alpha,\beta}\cup {}^-\Upsilon_2^{\alpha,\beta}$, and let   $u\in {}^-\hspace*{-1pt}{\mathcal B}^m_{\alpha,\beta} (\Lambda) $ with $m\in {\mathbb N}_0$. 
\begin{itemize}
\item For $0\le l\le m\le N,$
\begin{equation}\label{mainest0B}
\begin{split}
\big\|D_-^{\beta+l}({}^-\hspace*{-2pt} \pi_N^{(\alpha,-\beta)} u-u)\big\|_{\omega^{(\alpha+\beta+l,l)}}&\le N^{(l-m)/2}
\sqrt{\frac{(N-m+1)!}{(N-l+1)!}}\,\big\|D_-^{\beta+m}  u\big\|_{\omega^{(\alpha+\beta+m,m)}}.
\end{split}
\end{equation}
In particular,  if $m$ is fixed, we have
\begin{equation}\label{mainest0Best}
\begin{split}
\big\|D_-^{\beta+l}({}^-\hspace*{-2pt} \pi_N^{(\alpha,-\beta)} u-u)\big\|_{\omega^{(\alpha+\beta+l,l)}}&\le cN^{l-m}\,\big\|D_-^{\beta+m}  u\big\|_{\omega^{(\alpha+\beta+m,m)}}.
\end{split}
\end{equation}
\item For $0\le m\le N,$ we also have  the  $L^2_{\omega^{(\alpha,-\beta)}}$-estimates:
\begin{equation}\label{mainest0AnewB}
\begin{split}
\big\|{}^-\hspace*{-2pt} \pi_N^{(\alpha,-\beta)} u-u\big\|_{\omega^{(\alpha,-\beta)}}\le
cN^{-\beta}
\sqrt{\frac{(N-m+1)!}
{(N+m+1)!}}\,
\big\|D_-^{\beta+m}  u\big\|_{\omega^{(\alpha+\beta+m,m)}}.
\end{split}
\end{equation}
In particular,  if $m$ is fixed, then
\begin{equation}\label{mainest0AnewfixedB}
\begin{split}
\big\|{}^-\hspace*{-2pt} \pi_N^{(\alpha,-\beta)} u-u\big\|_{\omega^{(\alpha,-\beta)}}\le
cN^{-(\beta+m)} \big\|D_-^{\beta+m}  u\big\|_{\omega^{(\alpha+\beta+m,m)}}.
\end{split}
\end{equation}
\end{itemize}
Here, $c\approx 1$ for $N\gg1$.
\end{thm}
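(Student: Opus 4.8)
The plan is to deduce every estimate directly from Theorem \ref{Th3.1main} by exploiting the reflection symmetry \eqref{parity2}, which interchanges the two families of GJFs through the change of variables $x\mapsto -x$. For a function $v$ on the symmetric interval $\Lambda=(-1,1)$ I would write $\tilde v(x):=v(-x)$; this map is an involution that preserves each of the weighted $L^2$-spaces in play, so it suffices to transport the whole projection framework for $\big\{{}^+{\hspace*{-3pt}}J_n^{(-\alpha,\beta)}\big\}$ to the present setting.

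First I would record three elementary transformation rules under reflection. Starting from the definitions \eqref{leftRLdefn}--\eqref{rightRLdefn}, the substitution $y\mapsto -y$ in the defining integral should show that the left and right Riemann--Liouville derivatives are simply swapped, i.e.
$$
\big(D_+^s\tilde v\big)(x)=\big(D_-^s v\big)(-x),\qquad s\in\mathbb R^+,\ x\in\Lambda,
$$
the integer case being a consequence of \eqref{2rela} and the chain rule. Next, since $\omega^{(a,b)}(-x)=\omega^{(b,a)}(x)$, a change of variables in the weighted norms gives
$$
\big\|D_-^{\beta+l}u\big\|_{\omega^{(\alpha+\beta+l,l)}}=\big\|D_+^{\beta+l}\tilde u\big\|_{\omega^{(l,\alpha+\beta+l)}},\qquad
\big\|u\big\|_{\omega^{(\alpha,-\beta)}}=\big\|\tilde u\big\|_{\omega^{(-\beta,\alpha)}}.
$$
Finally, reading \eqref{parity2} with the two parameters interchanged yields ${}^-{\hspace*{-3pt}}J_n^{(\alpha,-\beta)}(x)=(-1)^n\,{}^+{\hspace*{-3pt}}J_n^{(-\beta,\alpha)}(-x)$, from which I expect that reflection maps ${}^-{\hspace*{-2pt}}\mathcal F_N^{(\alpha,-\beta)}(\Lambda)$ onto ${}^+{\hspace*{-2pt}}\mathcal F_N^{(-\beta,\alpha)}(\Lambda)$ and, since the orthogonality defining the projections is preserved, that the projections correspond via $\big({}^-{\hspace*{-2pt}}\pi_N^{(\alpha,-\beta)}u\big)(x)=\big({}^+{\hspace*{-2pt}}\pi_N^{(-\beta,\alpha)}\tilde u\big)(-x)$.

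With these identifications the conclusion should follow at once. Swapping the parameters turns the hypothesis $(\alpha,\beta)\in{}^-\Upsilon_1^{\alpha,\beta}\cup{}^-\Upsilon_2^{\alpha,\beta}$ into $(\beta,\alpha)\in{}^+\Upsilon_1^{\alpha,\beta}\cup{}^+\Upsilon_2^{\alpha,\beta}$, and the norm identities show that $u\in{}^-{\hspace*{-2pt}}\mathcal B_{\alpha,\beta}^m(\Lambda)$ is equivalent to $\tilde u\in{}^+{\hspace*{-2pt}}\mathcal B_{\beta,\alpha}^m(\Lambda)$. I would then apply Theorem \ref{Th3.1main} to $\tilde u$ with the pair $(\beta,\alpha)$ in place of $(\alpha,\beta)$ and transcribe the resulting bounds back through the projection correspondence; because $\alpha+\beta$ is symmetric in the two parameters, every weight exponent and every Gamma-ratio prefactor matches verbatim, delivering \eqref{mainest0B}--\eqref{mainest0AnewfixedB}.

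The part demanding genuine care is the second paragraph, specifically the claim that $D_+^s$ and $D_-^s$ are genuinely interchanged by $x\mapsto-x$: this must be checked directly from \eqref{leftRLdefn}--\eqref{rightRLdefn}, carefully tracking the factor $(-1)^k$, the reversal of the integration limits, and the $k$-fold chain rule, so that the two sign factors cancel. The compatibility of the orthogonal projection with reflection is then routine once the weight swap is noted. Everything else is bookkeeping, and the same scheme---invoking Theorem \ref{Th3.3main} rather than Theorem \ref{Th3.1main}---would in addition dispatch the remaining case $(\alpha,\beta)\in{}^-\Upsilon_3^{\alpha,\beta}$.
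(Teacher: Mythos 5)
Your proposal is correct and follows essentially the route the paper itself takes: the paper states at the outset of Section 4 that the results for $\big\{{}^-{\hspace*{-3pt}}J_{n}^{(\alpha,-\beta)}\big\}$ follow from those for $\big\{{}^+{\hspace*{-3pt}}J_{n}^{(-\alpha,\beta)}\big\}$ ``thanks to \eqref{parity2}'' and gives no separate argument for Theorem \ref{Thm3.2} beyond invoking the proof of Theorem \ref{Th3.1main}. Your write-up merely makes explicit the reflection identities (in particular $(D_+^s\tilde v)(x)=(D_-^s v)(-x)$ and the weight swap) that the paper leaves implicit, and these check out.
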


Next, we consider $(\alpha,\beta)\in {}^-\hspace*{-1pt}\Upsilon_3^{\alpha,\beta}$.
For $D_-^\beta u\in L^2_{\omega^{(\alpha+\beta,0)}}(\Lambda),$  we  define the  operator ${}^-\hspace*{-1pt} \Pi_N^{(\alpha,-\beta)}$
similarly as that in Lemma \ref{ulem}. Following the lines as in the proof  of Theorem \ref{Th3.3main},  we can obtain the following estimates.
\begin{thm}\label{Th3.3mainB}  Let  $(\alpha,\beta)\in {}^-\hspace*{-1pt}\Upsilon_3^{\alpha,\beta}$.  Suppose that $D_-^{\beta+l} u\in L^2_{\omega^{(\alpha+\beta+l,l)}}(\Lambda)$ with $0\le l\le m\le N.$ Then we have
\begin{equation}\label{newestB}
\begin{split}
\big\|D_-^\beta ({}^-\hspace*{-2pt}\pi_N^{(\alpha,-\beta)}u-u)\big\|_{\omega^{(\alpha+\beta,0)}}
& \le
N^{-m/2}\sqrt{\frac{(N-m+1)!}{(N+1)!}} \big\|D_-^{\beta+m} u\big\|_{\omega^{(\alpha+\beta+m,m)}}.
\end{split}
\end{equation}
In particular, if $m$ is fixed, we have
\begin{equation}\label{newest2B}
\begin{split}
\big\|D_-^\beta ({}^-\hspace*{-2pt}\pi_N^{(\alpha,-\beta)}u-u)\big\|_{\omega^{(\alpha+\beta,0)}}
&\le cN^{-m} \big\|D_-^{\beta+m} u\big\|_{\omega^{(\alpha+\beta+m,m)}},
\end{split}
\end{equation}
where  the constant $c\approx 1$ for $N\gg 1.$
\end{thm}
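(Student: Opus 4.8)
The plan is to obtain Theorem \ref{Th3.3mainB} by reducing it to the already-established Theorem \ref{Th3.3main} through the reflection $x\mapsto -x$, exploiting the parity relation \eqref{parity2}. First I would record the elementary fact that reflection interchanges the two one-sided fractional operators: writing $\tilde u(x):=u(-x)$, a change of variables in the definitions \eqref{leftintRL} gives $(I_\pm^\rho \tilde u)(x)=(I_\mp^\rho u)(-x)$, and hence, through the factorizations \eqref{ImportRela}, $(D_\pm^s \tilde u)(x)=(D_\mp^s u)(-x)$ for every $s\ge 0$. This is the only genuinely new computation; it turns every left-sided object attached to $u$ into a right-sided object attached to $\tilde u$.

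Next I would match the parameters. Given $(\alpha,\beta)\in {}^-\Upsilon_3^{\alpha,\beta}$, set $(\tilde\alpha,\tilde\beta)=(\beta,\alpha)$; the defining constraints $\beta>0$, $-\beta-1<\alpha<-1$, $-\alpha\notin\mathbb N$ become exactly $\tilde\alpha>0$, $-\tilde\alpha-1<\tilde\beta<-1$, $-\tilde\beta\notin\mathbb N$, i.e. $(\tilde\alpha,\tilde\beta)\in {}^+\Upsilon_3^{\alpha,\beta}$, so Theorem \ref{Th3.3main} and Lemma \ref{ulem} apply to $\tilde u$ with these parameters. The parity relation \eqref{parity2}, with the roles of $\alpha$ and $\beta$ exchanged, reads ${}^-J_n^{(\alpha,-\beta)}(-x)=(-1)^n\,{}^+J_n^{(-\beta,\alpha)}(x)$, so reflection carries the basis $\{{}^-J_n^{(\alpha,-\beta)}\}$ onto $\{{}^+J_n^{(-\tilde\alpha,\tilde\beta)}\}$ up to the harmless sign $(-1)^n$. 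Combining this with the operator identity above and the Jacobi parity $P_n^{(0,\alpha+\beta)}(-x)=(-1)^nP_n^{(\alpha+\beta,0)}(x)$ from \eqref{parity}, one checks that the two classical projections $\Pi_N^{(0,\alpha+\beta)}$ and $\Pi_N^{(\alpha+\beta,0)}$ are intertwined by reflection, whence the defining relation \eqref{unqiueA} yields the commutation $\big({}^-\pi_N^{(\alpha,-\beta)}u\big)(-x)={}^+\pi_N^{(-\beta,\alpha)}\tilde u(x)$.

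The last step is to translate norms. Using $(D_-^{\beta+m}u)(x)=(D_+^{\tilde\alpha+m}\tilde u)(-x)$ together with the symmetry of the Jacobi weight, $\omega^{(m,\alpha+\beta+m)}(-x)=\omega^{(\alpha+\beta+m,m)}(x)$, a single change of variables gives $\|D_+^{\tilde\alpha+m}\tilde u\|_{\omega^{(m,\tilde\alpha+\tilde\beta+m)}}=\|D_-^{\beta+m}u\|_{\omega^{(\alpha+\beta+m,m)}}$, and likewise for the $m=0$ term on the left-hand side, where $\omega^{(0,\alpha+\beta)}(-x)=\omega^{(\alpha+\beta,0)}(x)$. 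Feeding these identities into \eqref{newest}--\eqref{newest2} then reproduces \eqref{newestB}--\eqref{newest2B} verbatim, with the same constant $c\approx 1$.

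Alternatively, I would give the direct transcription of the proof of Theorem \ref{Th3.3main}: establish the ${}^-$-analog of Lemma \ref{ulem} via \eqref{derivative+B}, expand $D_-^\beta u$ in $\{P_n^{(\alpha+\beta,0)}\}$, and reduce the error to the ratio $\gamma_{N+1}^{(\alpha+\beta,0)}/\mu_{N+1,m}^{(\alpha+\beta,0)}$. Here I would note that $\gamma_n^{(\cdot,\cdot)}$ in \eqref{co-gamma} is symmetric in its two indices and $\kappa_{n,m}^{(\cdot,\cdot)}$ in \eqref{derimulti} depends on them only through their sum, so this ratio coincides with the one already bounded in \eqref{D+estform3}, after which \eqref{Gammaratio} closes the fixed-$m$ case. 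I expect no real obstacle in either route; the only point requiring care is the reflection bookkeeping, namely keeping straight that $x\mapsto -x$ swaps $D_+^s$ with $D_-^s$ while simultaneously transposing the two Jacobi indices, after which the theorem is a mechanical restatement of the estimates already proved.
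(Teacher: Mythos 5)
Your proposal is correct and takes essentially the same route as the paper: the paper omits the proof of Theorem \ref{Th3.3mainB}, stating only that it follows ``along the lines'' of Theorem \ref{Th3.3main} via the symmetry \eqref{parity2}, and your reflection argument (together with the direct-transcription alternative exploiting the symmetry of $\gamma_n^{(\cdot,\cdot)}$ and the sum-dependence of $\kappa_{n,m}^{(\cdot,\cdot)}$) is precisely the bookkeeping the authors leave to the reader. Both routes check out, including the parameter correspondence ${}^-\Upsilon_3^{\alpha,\beta}\ni(\alpha,\beta)\mapsto(\beta,\alpha)\in{}^+\Upsilon_3^{\alpha,\beta}$ and the weight/operator swaps under $x\mapsto-x$.
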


\begin{rem}\label{betterunderstanding}
To have  a better understanding of the above approximation results,  we compare GJF and Legendre approximation  to the function:
 \begin{equation}\label{signfun}
 u(x)=(1+x)^{b} g(x),\quad b\in {\mathbb R}^+,\;\; x\in \Lambda,
 \end{equation}
 where  $g$ is analytic within a domain containing $\Lambda.$  Recall the best $L^2$-approximation of $u$ by its orthogonal projection $\pi_N^Lu$ (see, e.g., \cite[Ch. 3]{ShenTangWang2011}):
 $$\|\pi_N^L u-u\|\le c N^{1-m}\|D^mu\|_{\omega^{(m,m)}}.$$
 If $b$ is non-integer,  a direct calculation shows that $u$ has a limited regularity: $m<1+2b-\epsilon$ for small $\epsilon>0,$ in this usual weighted norm involving ordinary derivatives.
 We now consider GJF approximation \eqref{mainest0AnewB} to $u$ in  \eqref{signfun}. 
   Using the explicit formulas for fractional integral/derivative of $(1+x)^b$ and the  Leibniz' formula (see \cite[Ch. 2]{Diet10}), we find that if  $\beta=b,$ $D_-^{\beta+m}u$ is analytic as well for any $m\in {\mathbb N}_0,$ so by
   \eqref{signfun}  with $\alpha=0, \beta=b$ and $m=N,$ and using \eqref{stirlingfor}, we have
   $$ \big\|{}^-\hspace*{-2pt} \pi_N^{(0,-\beta)} u-u\big\|\le \big\|{}^-\hspace*{-2pt} \pi_N^{(0,-\beta)} u-u\big\|_{\omega^{(0,-\beta)}}\le
cN^{-(\beta+1/4)}\Big(\frac e {2N}\Big)^N \big\|D_-^{\beta+N}  u\big\|_{\omega^{(\beta+N,N)}}.$$
 This implies the exponential convergence $O(e^{-cN}).$

 Also note that if $u$ is smooth, e.g.,  $b\in {\mathbb N},$  we can only get a limited convergence rate by choosing a non-integer $\beta.$  Indeed,  a direct calculation by using the formulas in \cite{Diet10} yields
 $$D_-^{\beta+m}u=(1+x)^{b-\beta-m} h(x),$$
 where $h$ is analytic.  Therefore, we have that $\big\|D_-^{\beta+m}  u\big\|_{\omega^{(\beta+m,m)}}<\infty$ only when $m+2\beta<1+2b-\epsilon.$
 \qed
 \end{rem}

\section{Applications to fractional differential equations}\label{sect:appl}
\setcounter{equation}{0}
\setcounter{lmm}{0}
\setcounter{thm}{0}

It is well-known that the underlying solution  of a FDE usually exhibits singular behaviors at the boundaries, even when the given data are regular.  Accordingly,  the solution and data are not  always  in the same types of Sobolev spaces as opposite to  DEs of integer derivatives.  Hence,   the  use of polynomial approximations can only lead to limited convergence rate.
In this section,  we shall construct Petrov-Galerkin spectral methods using  GJFs as basis functions for several prototypical FDEs, and
demonstrate that
\begin{itemize}
 \item[(i)]   The convergence rate of our approach   only
depends on the regularity of the data in the usual weighted Sobolev space, regardless of the singular behavior of their solutions, so truly spectral accuracy can be achieved,  if the input of a FDE  is smooth enough.
\item[(ii)]  With a suitable choice of the parameters in the GJF basis,  the resulted linear systems are usually  sparse and sometimes  diagonal.
\end{itemize}
We shall provide ample numerical results to validate the theoretical analysis.
We believe that the study of  these prototypical FDEs can shed light on the investigation of more complicated FDEs.

\subsection{Fractional initial value problems (FIVPs)}
As a first example, we consider the factional initial value problem  of order $s\in (k-1, k)$ with $k\in {\mathbb  N}:$
\begin{equation}\label{prob11}
\begin{split}
  & D_{+}^s  u(x)=f(x), \;\;  x\in {\Lambda};\quad u^{(l)}(1)=0,\quad l=0, \cdots, k-1,    
  \end{split}
 \end{equation}
 where  $f\in L^2(\Lambda).$  
The GJF-spectral-Petrov-Galerkin scheme is to  find $u_N\in {}^+\hspace*{-2pt} {\mathcal F}_N^{(-s,-s)}(\Lambda) $ (defined in   \eqref{frac-polysps}) such that
 \begin{equation}\label{galformA}
 (D_{+}^s  u_N,  v_N)=(f,   v_N),\quad \forall\, v_N\in {\mathcal P}_{N}.     
 \end{equation}
  Using the GJF basis, we can write
 \begin{equation}\label{errorest2A}
 u_N(x)=\sum_{ n=0}^{ N} \tilde u_{n}^{(s)}\,   {}^+ {\hspace*{-3pt}}J_{n}^{(-s,-s)}(x)\in {}^+\hspace*{-2pt} {\mathcal F}_N^{(-s,-s)}(\Lambda) .
 \end{equation}
 Taking $v_N= P_k$ in \eqref{galformA},  we derive from   \eqref{derivative+A} and the orthogonality of Legendre polynomials  that
\begin{equation}\label{coefeqns2A}
\tilde  u_{n}^{(s)}= \frac{n!} {\Gamma(n+s+1)}   \tilde  f_{n},\quad 0\le   n\le N,
\end{equation}
where $\tilde  f_{n}$ is the $(n+1)$-th  coefficient of the Legendre expansion of $f$. 
Therefore, we obtain the numerical solution $u_N$ by inserting \eqref{coefeqns2A} into \eqref{errorest2A}.

The following error estimate shows the spectral accuracy of this GJF-Petrov-Galerkin approximation.
\begin{thm}\label{Th3141}  Let  $u$ and $u_N$ be the solution  of \eqref{prob11} and
\eqref{galformA}, respectively.
If  
 $f^{(l)}\in L^2_{\omega^{(l,l)}}(\Lambda)$ for all $0\le l\le m,$   then  we have that for   $0\le m\le N,$
\begin{equation}\label{mainest00A2A}
\|D_+^{s} (u-u_N)\|\leq  cN^{-m} \|f^{(m)}\|_{\omega^{(m,m)}},
\end{equation}
where $c$ is a positive constant independent of $u, N$ and $m.$
\end{thm}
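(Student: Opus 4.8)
The plan is to reduce the fractional error $\|D_+^s(u-u_N)\|$ to a classical Legendre $L^2$-projection error of the datum $f$, so that the singular behaviour of the solution never enters the estimate. First note that $u_N$ is well defined and unique, since its coefficients are given explicitly by \eqref{coefeqns2A}. The key observation is that $D_+^s$ maps the GJF basis onto Legendre polynomials: by \eqref{derivative+A} with $\alpha=s$, $\beta=-s$ (so that $\alpha+\beta=0$),
\begin{equation*}
D_+^s\,{}^+{\hspace*{-3pt}}J_{n}^{(-s,-s)}(x)=\frac{\Gamma(n+s+1)}{n!}\,P_n^{(0,0)}(x)=\frac{\Gamma(n+s+1)}{n!}\,P_n(x),
\end{equation*}
where $P_n$ is the Legendre polynomial. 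Hence $D_+^s u_N\in{\mathcal P}_N$, and inserting \eqref{coefeqns2A} into \eqref{errorest2A} shows that $D_+^s u_N=\sum_{n=0}^N\tilde f_n P_n$ is exactly the truncated Legendre expansion of $f$.

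Second, I would read this off directly from the scheme. Let $\pi_N=\Pi_N^{(0,0)}$ denote the $L^2$-orthogonal (Legendre) projection onto ${\mathcal P}_N$. The Galerkin identity \eqref{galformA} asserts $(D_+^s u_N-f,v_N)=0$ for every $v_N\in{\mathcal P}_N$; since $D_+^s u_N\in{\mathcal P}_N$, this is precisely the defining relation of $\pi_N$, whence $D_+^s u_N=\pi_N f$. Combining this with the equation $D_+^s u=f$ from \eqref{prob11} and the linearity of $D_+^s$ yields the central identity
\begin{equation*}
D_+^s(u-u_N)=f-\pi_N f,\qquad\text{so that}\qquad \|D_+^s(u-u_N)\|=\|f-\pi_N f\|.
\end{equation*}
The factor $(1-x)^s$ carried by both $u$ and $u_N$ thereby disappears, and the whole problem collapses to estimating how well $f$ is approximated by its Legendre series in the \emph{unweighted} $L^2$-norm.

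Finally, I would control $\|f-\pi_N f\|$ by the standard Legendre projection estimate (e.g. \cite[Ch. 3]{ShenTangWang2011}), or re-derive it in the spirit of the proof of Theorem \ref{Th3.3main}. Writing $f=\sum_n\tilde f_n P_n$ and, via \eqref{derimulti}, $f^{(m)}=\sum_{n\ge m}\tilde f_n\,\kappa_{n,m}^{(0,0)}P_{n-m}^{(m,m)}$, the orthogonality \eqref{jcbiorth} gives $\|f-\pi_N f\|^2=\sum_{n>N}\gamma_n^{(0,0)}|\tilde f_n|^2$ and $\|f^{(m)}\|_{\omega^{(m,m)}}^2=\sum_{n\ge m}(\kappa_{n,m}^{(0,0)})^2\gamma_{n-m}^{(m,m)}|\tilde f_n|^2$. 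A short computation with \eqref{co-gamma} shows that the ratio of the two sets of normalization constants simplifies to
\begin{equation*}
\frac{\gamma_n^{(0,0)}}{(\kappa_{n,m}^{(0,0)})^2\gamma_{n-m}^{(m,m)}}=\frac{(n-m)!}{(n+m)!},
\end{equation*}
which is decreasing in $n$; bounding it at $n=N+1$ and estimating the factorial quotient $\tfrac{(N+1-m)!}{(N+1+m)!}\le cN^{-2m}$ through \eqref{Gammaratio} and \eqref{stirlingfor} produces $\|f-\pi_N f\|\le cN^{-m}\|f^{(m)}\|_{\omega^{(m,m)}}$, which is \eqref{mainest00A2A}. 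I expect no serious obstacle: once the identity $D_+^s u_N=\pi_N f$ is established the argument is purely algebraic, and the only step demanding care is the bookkeeping of the constants $\gamma_n^{(0,0)}$, $\kappa_{n,m}^{(0,0)}$ and $\gamma_{n-m}^{(m,m)}$ needed to verify the simplification above.
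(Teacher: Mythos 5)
Your proposal is correct and is essentially the paper's argument: both reduce the problem to the identity $D_+^s u_N=\pi_N f$ (via \eqref{derivative+A} with $\alpha=-\beta=s$) together with $D_+^s u=f$, so that $\|D_+^s(u-u_N)\|$ equals the Legendre truncation error of $f$, which is then bounded by $cN^{-m}\|f^{(m)}\|_{\omega^{(m,m)}}$. The only cosmetic difference is that the paper routes the last step through the GJF projector ${}^+\hspace*{-2pt}\pi_N^{(-s,-s)}u$ and Theorems \ref{Th3.1main}/\ref{Th3.3main}, whereas you write $D_+^s(u-u_N)=f-\pi_N f$ directly and re-derive the classical Legendre estimate; the underlying computation is the same.
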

\begin{proof}
Let ${}^+\hspace*{-2pt}\pi_N^{(-s,-s)}u$ be the same as in \eqref{projectorII} for $0<s<1,$ and as in
\eqref{D+exp2projB} for $s>1,$ respectively.  By \eqref{projectorIV} (with $l=0$)  and  \eqref{D+exp2projB}, we have
$$
\big(D_{+ }^s ({}^+\hspace*{-2pt} \pi_N^{(-s,-s)} u-u), \psi\big)=0,\quad \forall\, \psi\in {\mathcal P}_N.
$$
Then by  \eqref{prob11},
\begin{equation}\label{Rela-Pnu-Pnf}
\begin{split}
\big(f- D_{+ }^s {}^+ \hspace*{-2pt}\pi_N^{(-s,-s)}u,\, &\psi\big)
=\big(D_{+ }^s u-D_{+ }^s  {}^+ \hspace*{-2pt}\pi_N^{(-s,-s)} u, \psi\big)=0, \quad \forall\, \psi\in {\mathcal P}_N.
\end{split}
\end{equation}
Let $\pi_N f$ be the $L^2$-orthogonal projection of $f$ upon ${\mathcal P}_N.$    We infer from
\eqref{Rela-Pnu-Pnf} that  $D_{+ }^s  {}^+ \hspace*{-2pt}\pi_N^{(-s,-s)} u=\pi_N f.$  On the other hand,  by
\eqref{galformA},  $D_{+ }^s  u_N=\pi_N f.$ Thus, we have $D_+^{s} ({}^+\hspace*{-2pt} \pi_N^{(-s,-s)} u-u_N)=0.$
Therefore,  it follows from Theorem \ref{Th3.1main} (with $\alpha=-\beta=s$ and $0<s<1$), Theorem \ref{Th3.3main} (with $\alpha=-\beta=s$ and $s>1$) that
\begin{equation*}\label{mainest00A2A0}
\|D_+^{s} (u-u_N)\|=\big\|D_+^{s} (u-{}^+\hspace*{-2pt} \pi_N^{(-s,-s)} u)\big\|\leq  cN^{-m}\|D_+^{s+m}  u\big\|_{\omega^{(m,m)}}\le  c N^{-m}\|f^{(m)}\|_{\omega^{(m,m)}}.
\end{equation*}
This ends the proof. 
\end{proof}

\begin{rem}\label{moregenform} One can also construct  a similar Petrov-Galerkin scheme  for the following  more general FIVPs of order $s\in (k-1, k)$ with $k\in {\mathbb  N}:$
\begin{equation}\label{prob112A}
\begin{split}
  & {\mathcal L}[u]:= D_{+}^s  u(x)+p_1(x) D_+^{s-1}u(x)+\cdots+p_{k-1}(x)D_+^{s-k+1} u(x)=f(x), \;\;  x\in {\Lambda};\\
  & u^{(l)}(1)=0,\quad l=0, \cdots, k-1,    
  \end{split}
 \end{equation}
 where  
 $f$ and $\{p_j\}$ are continuous functions on $\bar \Lambda.$
 We find from \eqref{derivative+}  that  ${\mathcal L}[{}^+ {\hspace*{-3pt}}J_{n}^{(-s,s)}]$ is a combination of
 products of $p_j$ and polynomials. Hence, one can derive spectrally accurate error estimates as in Theorem \ref{Th3141}.
 If $\{p_j\}$ are constants, the corresponding linear system will be sparse; for general $\{p_j\}$,  one can use a  preconditioned iterative algorithm  as in the integer $s$ case by using the problem with suitable constant constants as a preconditioner (cf.  \cite{ShenTangWang2011}).   \qed
\end{rem}


\subsection{Fractional boundary value problems (FBVPs) }  In accordance with usual BVPs,
it is necessary to classify a FBVP of order $\nu$  as even or odd order as follows.
\begin{itemize}
\item  If $\nu=s+k$ with $s\in (k-1,k)$ and $k\in {\mathbb N}$,  we say  it is  of even order.  In this case, $2k$ boundary conditions should be imposed.
\item  If $\nu=s+k$ with $s\in (k,k+1)$ and $k\in {\mathbb N}$, we say it is  odd order.  In this case, $2k+1$ boundary conditions should be imposed.
\end{itemize}
In practice, the boundary conditions can be of integral type or usual Dirichlet  type,
which oftentimes  lead to different singular behaviour of the solution and should be treated quite differently.
For easy of accessibility, we first consider FBVPs with integral boundary conditions (BCs), and then discuss the more complicated Dirichlet BCs.

 \subsubsection{FBVPs  with integral BCs}
 To fix the idea, we consider the fractional boundary value problem of order $\nu \in (1,2)$: 
\begin{equation}\label{prob12AA}
\begin{split}
  & D_{+}^{\nu}  u(x)=f(x), \;\;  x\in {\Lambda};\quad I_+^{\mu}u(\pm 1)=0,
  \end{split}
 \end{equation}
 where $\mu:=2-\nu\in (0,1),$  and  $I_+^{\mu}$ is the fractional integral operator defined in \eqref{leftintRL}. Here,   $f(x)$ is a given  function with regularity to be specified later. 

   Let $H^1_0(\Lambda)=\{u\in H^1(\Lambda)\,:\, u(\pm 1)=0\},$ and
 $H^{-1}(\Lambda)$ be its dual space.
  Using the property: $D_{+}^{\nu}=D^2 I_+^\mu$ (see  \eqref{ImportRela} ),  we can  formulate the weak form of \eqref{prob12A} as:  Find $v:=I_+^\mu u \in H^1_0(\Lambda)$ such that
  \begin{equation}\label{weakform2A}
 (Dv , \, D  w)=(f,  w),\quad \forall\, w\in H^1_0(\Lambda).
 \end{equation}
 It is well-known that for any $f\in H^{-1}(\Lambda),$ it admits a unique solution  $v\in H^1_0(\Lambda).$ Then  we can recover
  $u$ uniquely from $u=D^{\mu}_+ v,$ thanks to \eqref{rulesa}.

As already mentioned, it is important  to understand the singular behavior of the solution  so as to compass the choice of the parameter that can match the singularity.
For this purpose,  we act $I_+^2$ on  both sides of  \eqref{prob12AA} and impose the boundary conditions, leading to
\begin{equation}\label{Imuux}
I_+^\mu u(x)=-I_+^2 f(x)+\frac{I_+^2 f(-1)}{2}(1-x).
\end{equation}
Thus, by \eqref{rulesa},
\begin{equation}\label{uxux}
u(x)=D_+^\mu I_+^\mu u(x)=-I_+^\nu f(x)+\frac{I_+^2 f(-1)}{2\Gamma(2-\mu)}(1-x)^{1-\mu}.
\end{equation}
Correspondingly, we define the finite-dimensional fractional-polynomial solution space:
 \begin{equation}\label{approxVN}
  V_N:=\big\{ \phi=(1-x)^{1-\mu}\psi\,: \psi\in {\mathcal P_{N-1}} \;\; \text{such that}\;\; I_+^{\mu} \phi(-1)=0\big\}.
 \end{equation}
The GJF-Petrov-Galerkin  approximation is to find $u_N\in V_N$  such that
  \begin{equation}\label{galerkinform2A}
 (D_+^{1-\mu} u_N, \, D  w_N)=(f,  w_N),\quad \forall w_N\in {\mathcal P}_N^0:= {\mathcal  P}_N\cap H^1_0(\Lambda).
 \end{equation}

 In terms of error analysis,  it is more convenient to formulate  \eqref{galerkinform2A} into an equivalent Galerkin approximation (see \eqref{schemeform2A} below).
Indeed,  note that
 \begin{equation}\label{PNspan}
 {\mathcal P}_N={\rm span}\big\{P_n^{(1-\mu,\mu-1)}\,:\,0\le n\le N\big\},
 \end{equation}
and by  \eqref{newbateman} with $\rho=\mu,\alpha=1-\mu$ and $\beta=\mu-1,$  
\begin{equation}\label{Gjcbiform}
\begin{split}
I_+^\mu\, {}^+ {\hspace*{-3pt}}J_{n}^{(\mu-1,\mu-1)}(x)= \frac{\Gamma(n+2-\mu)}{(n+1)!}  {}^+ {\hspace*{-3pt}} J_n^{(-1,-1)}(x)=
\frac{\Gamma(n+2-\mu)}{n!}\int_x^1 P_n(y) dy,
\end{split}
\end{equation}
where we used the formula derived from integrating the Sturm-Liouville equation of Legendre polynomials and using \eqref{derimulti}:
\begin{equation}\label{SLprbform}
I_+^1 P_n(x)=\int_x^1 P_n(y) dy=\frac 1 {2n}(1-x^2) P_{n-1}^{(1,1)}(x)=\frac 1 {n+1}   {}^+ {\hspace*{-3pt}} J_n^{(-1,-1)}(x),\;\; n\ge 1.
\end{equation}
Since for $n\ge 1,  I_+^\mu {}^+ {\hspace*{-3pt}}J_{n}^{(\mu-1)}(\pm 1)=0,$  we have
\begin{equation}\label{VNspan}
 V_N={\rm span}\big\{{}^+ {\hspace*{-3pt}}J_{n}^{(\mu-1,\mu-1)} \,:\,1\le n\le N-1\big\}; \;\;\; {\mathcal P}_N^0={\rm span}
 \big\{I_+^1 P_n\,:\, 1\le n\le N-1\big\}.
 \end{equation}
Thus, we infer from \eqref{Gjcbiform} that the operator $I_+^\mu$ is an isomorphism between $V_N$ and  ${\mathcal P}_N^0.$  Then we can  equivalently  formulate   \eqref{galerkinform2A} as: Find $v_N:=I_+^\mu u_N\in {\mathcal P}_N^0$ such that
 \begin{equation}\label{schemeform2A}
 (Dv_N, \, D  w_N)=(f,  w_N),\quad \forall w_N\in {\mathcal P}_N^0,
 \end{equation}
 which admits a unique solution as with \eqref{weakform2A}.  
 In fact, this formulation facilitates the error analysis, which can be complished by  a standard argument.
  \begin{thm}\label{convresult} Let $u$ and $u_N$ be the solution  of   \eqref{weakform2A} and \eqref{schemeform2A}, respectively. If $I_+^\mu u\in H^1_0(\Lambda)$ and $(1-x^2)^{(m-1)/2}D^{m-\mu}_+u \in L^2(\Lambda)$ with $m\in {\mathbb N},$ then we have
  \begin{equation}\label{2ndforderconvergence}
\|D^{1-\mu}_+(u-u_N)\|\le c N^{1-m} \|D^{m-\mu}_+ u\|_{\omega^{(m-1,m-1)}}.
\end{equation}
In particular, if $f^{(m-2)}\in L^2_{\omega^{(m-1,m-1)}}(\Lambda)$ with $m\ge 2,$ we have
  \begin{equation}\label{2ndforderconvergenceC}
\|D^{1-\mu}_+(u-u_N)\|\le c N^{1-m} \|f^{(m-2)}\|_{\omega^{(m-1,m-1)}}.
\end{equation}
Here, $c$ is a positive constant independent of $N$ and $u.$
\end{thm}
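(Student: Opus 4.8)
The plan is to exploit the reformulation already set up in \eqref{weakform2A} and \eqref{schemeform2A}: writing $v=I_+^\mu u$ and $v_N=I_+^\mu u_N$, both the continuous and discrete problems become the classical Poisson problem $(Dv,Dw)=(f,w)$ and its standard $H^1_0$-Galerkin discretization on $\mathcal P_N^0$, with $v=I_+^\mu u\in H^1_0(\Lambda)$ guaranteed by hypothesis. The bridge back to the fractional-derivative error is the index identity \eqref{ImportRela}: for $1-\mu\in(0,1)$ one has $D_+^{1-\mu}=-D\,I_+^\mu$, so that
\begin{equation*}
D_+^{1-\mu}(u-u_N)=-D\,I_+^\mu(u-u_N)=-D(v-v_N).
\end{equation*}
Hence $\|D_+^{1-\mu}(u-u_N)\|=\|D(v-v_N)\|$, and the whole estimate reduces to controlling the energy-norm error of a standard spectral-Galerkin approximation.

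First I would establish Galerkin orthogonality. Subtracting \eqref{schemeform2A} from \eqref{weakform2A} tested against $w=w_N\in\mathcal P_N^0$ gives $(D(v-v_N),Dw_N)=0$ for all $w_N\in\mathcal P_N^0$. Since $(D\cdot,D\cdot)$ is a symmetric, coercive bilinear form on $H^1_0(\Lambda)$ (the seminorm $\|D\cdot\|$ being a norm there by the Poincar\'e inequality), $v_N$ is precisely the orthogonal projection of $v$ onto $\mathcal P_N^0$ in this inner product. Therefore
\begin{equation*}
\|D(v-v_N)\|=\inf_{w_N\in\mathcal P_N^0}\|D(v-w_N)\|.
\end{equation*}

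Next I would invoke the standard weighted polynomial approximation result for the $H^1_0$-orthogonal projection (see \cite[Ch.~3]{ShenTangWang2011}), which bounds the best approximation by $cN^{1-m}\|D^m v\|_{\omega^{(m-1,m-1)}}$ for $1\le m\le N+1$. To translate this back into the quantity appearing in \eqref{2ndforderconvergence}, I would again use \eqref{ImportRela}: since $m-\mu\in(m-1,m)$, we have $D_+^{m-\mu}=(-1)^mD^m I_+^\mu$, whence $D^m v=(-1)^m D_+^{m-\mu}u$ and $\|D^m v\|_{\omega^{(m-1,m-1)}}=\|D_+^{m-\mu}u\|_{\omega^{(m-1,m-1)}}$; this is exactly where the hypothesis $(1-x^2)^{(m-1)/2}D_+^{m-\mu}u\in L^2(\Lambda)$ enters. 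Chaining the three displays yields \eqref{2ndforderconvergence}. For the data-regularity form \eqref{2ndforderconvergenceC}, I would use the equation itself: with $\nu=2-\mu$ we have $D_+^{2-\mu}u=D_+^\nu u=f$, and the index law $D_+^{s+l}=(-1)^lD^lD_+^s$ with $s=2-\mu$ and $l=m-2$ gives $D_+^{m-\mu}u=(-1)^{m-2}D^{m-2}f=(-1)^m f^{(m-2)}$ for $m\ge2$, so the two weighted norms coincide and \eqref{2ndforderconvergenceC} follows.

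The step requiring the most care is the chain of operator identities rather than any estimate: one must justify $D_+^{1-\mu}(u-u_N)=-D(v-v_N)$ and $D^m v=(-1)^m D_+^{m-\mu}u$ rigorously, i.e.\ verify that the compositions $D^k I_+^{k-s}$ in \eqref{ImportRela}, the cancellation rule \eqref{rulesa}, and the factorization $D_+^{s+l}=(-1)^lD^lD_+^s$ all legitimately apply to $u$ and $u_N$ under the stated regularity. Once these compositions are valid, the remainder is a direct transcription of the classical spectral-Galerkin error analysis for the second-order two-point boundary value problem.
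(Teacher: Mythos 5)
Your proposal is correct and follows essentially the same route as the paper: reduce to the integer-order Galerkin problem for $v=I_+^\mu u$, use Galerkin orthogonality / C\'ea's lemma with the $H^1_0$-projection $\pi_N^{1,0}v$ as comparison element, invoke the standard estimate $\|D(v-\pi_N^{1,0}v)\|\le cN^{1-m}\|D^m v\|_{\omega^{(m-1,m-1)}}$, and translate back via $D^m v=(\pm1)D_+^{m-\mu}u$ and $D^{m-2}f=D_+^{m-\mu}u$. Your version is in fact slightly more explicit than the paper's about the operator identities $D_+^{1-\mu}=-D\,I_+^\mu$ and $D_+^{s+l}=(-1)^lD^lD_+^s$, which the paper uses tacitly.
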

\begin{proof}  Using a standard argument for error analysis of Galerkin approximation, we find from
\eqref{weakform2A} and \eqref{schemeform2A} that
\begin{equation}\label{proof1}
\|D(v-v_N)\|=\inf_{v_N^*\in {\mathcal P}_N^0} \|D(v-v_N^*)\|.
\end{equation}
Let $\pi_N^{1,0}$ be the  usual $H^1_0$-orthogonal projection upon ${\mathcal P}_N^0,$ and recall the approximation result (see e.g., \cite[Ch. 3]{ShenTangWang2011}):
\begin{equation}\label{approxresult}
\|D(v-\pi_N^{1,0}v)\|\le c N^{1-m}\|D^m v\|_{\omega^{(m-1,m-1)}}.
\end{equation}
Recall that $v=I_+^\mu u$ and $v_N=I_+^\mu u_N,$ so we  take $v_N^*=\pi_N^{1,0}v$ in  \eqref{proof1} and obtain the desired  estimate \eqref{2ndforderconvergence} from \eqref{approxresult}.

The estimate \eqref{2ndforderconvergenceC} follows immediately from \eqref{prob12AA} and \eqref{2ndforderconvergence} by noting that $D^{m-2} f=D_+^{m-\mu} u.$
\end{proof}

 Now, we briefly describe the implementation of the scheme \eqref{galerkinform2A}.   Setting
 \begin{equation*}
 \begin{split}
 &u_N(x)=\sum_{n=1}^{N-1} \hat u_n {}^+ {\hspace*{-3pt}}J_{n}^{(\mu-1,\mu-1)}(x), \quad f_j=(f, I_+^1 P_j),\;\; 1\le j\le N-1,
 \end{split}
 \end{equation*}
 we find from \eqref{Gjcbiform} and the orthogonality of Legendre polynomials that
 \begin{equation}\label{orthresult}
 \big(D_+^{1-\mu}\, {}^+ {\hspace*{-3pt}}J_{n}^{(\mu-1,\mu-1)}, \, D  I_+^1 P_j\big)=
 \frac{\Gamma(n+2-\mu)}{n!} \frac{2n+1} 2 \delta_{jn}.
 \end{equation}
 Then  we  obtain from  \eqref{galerkinform2A} that
 \begin{equation}\label{orthfinal}
 \hat u_n= \frac{2 (n!) f_n} {(2n+1) \Gamma(n+2-\mu) },\quad 1\le n\le N-1.
\end{equation}
We see that using the  GJFs as basis functions, the matrix of the linear system  is diagonal.

\begin{rem}\label{higherorder}
The above approach can be applied to higher-order FBVPs.  For example, we
consider the FBVP of ``odd'' order: $\nu=3-\mu$ with $\mu\in (0,1):$
\begin{equation}\label{prob12Bc}
\begin{split}
  & D_{+}^{\nu}  u(x)=f(x), \;\;  x\in {\Lambda};\quad I_+^{\mu}u(\pm 1)=(I_+^{\mu}u)'(1)=0.
  \end{split}
 \end{equation}
 To avoid repetition, we just outline the numerical scheme and implementation.  Define the solution and test function spaces
 \begin{equation}\label{approxWNA}
 \begin{split}
  & V_N:=\big\{ \phi=(1-x)^{2-\mu}\psi\,: \psi\in {\mathcal  P_{N-2}} \;\; \text{such that}\;\; I_+^{\mu} \phi(-1)=0\big\},
  \\
  & V_N^*:=\{\psi\in {\mathcal P}_N\,:\,\psi(\pm 1)=\psi'(-1)=0  \}. 
  \end{split}
 \end{equation}
  The GJF-Petrov-Galerkin scheme is to find $u_N\in V_N$ such that
 \begin{equation}\label{GJFschems3rd}
 (D^{2-\mu}_+u_N, D w_N)=-(f,w_N),\quad \forall w_N\in V_N^*.
 \end{equation}
%
  Using \eqref{newbateman} with $\rho=\mu,\alpha=2-\mu$ and $\beta=\mu-1,$  we obtain from \eqref{alphaint2} that
\begin{equation}\label{Gjcbiform3A}
\begin{split}
I_+^\mu {}^+ {\hspace*{-3pt}}J_{n}^{(\mu-2,\mu-1)}(x)
= \frac{\Gamma(n+3-\mu)}{(n+2)!} {}^+ {\hspace*{-3pt}}J_{n}^{(-2,-1)}(x); \quad  I_+^\mu {}^+ {\hspace*{-3pt}}J_{n}^{(\mu-2,\mu-1)} (-1)=0,\;\;  n\ge 1.
\end{split}
\end{equation}
Hence, we have
 \begin{equation}\label{approxWNspan}
  V_N={\rm span}\big\{{}^+ {\hspace*{-3pt}}J_{n}^{(\mu-2,\mu-1)}\,:\, 1\le n\le N-2 \big\},\;\;\;
  V_N^*={\rm span}\big\{{}^- {\hspace*{-3pt}}J_{n}^{(-1,-2)}\;:\; 1\le n\le N-2 \big\}.
 \end{equation}
 By \eqref{JacobiForm3},
\begin{equation}\label{newformulas}
D^{2-\mu}_+{}^+ {\hspace*{-3pt}}J_{n}^{(\mu-2,\mu-1)}(x)=\frac{\Gamma(n+3-\mu)}{n!} P_n^{(0,1)}(x),\;\;  D{}^- {\hspace*{-3pt}}J_{m}^{(-1,-2)}(x)=(m+2) (1+x)P_m^{(0,1)}(x),
\end{equation}
 so by the orthogonality of the Jacobi polynomials $\{P_n^{(0,1)}\},$ the matrix of the system  \eqref{GJFschems3rd} is diagonal. \qed
\end{rem}

\subsubsection{FBVPs with Dirichlet boundary conditions}
Now, we turn to a more complicated case,  and consider the fractional boundary value problem
of even order $\nu=s+k$ with $s\in (k-1,k)$ and $k\in {\mathbb N}:$
\begin{equation}\label{prob12A}
\begin{split}
  & D_{+}^{\nu}  u(x)=f(x), \;\;  x\in {\Lambda};\quad u^{(l)}(\pm 1)=0,\;\;  l=0, 1,  \cdots, k-1,
  \end{split}
 \end{equation}
 where  $f(x)$ is a given function with regularity to be specified later.

We introduce the solution  and test function spaces:
 \begin{equation}\label{solusps}
 \begin{split}
& U:=\big\{u\in L^2_{\omega^{(-s,-k)}}(\Lambda)\,:\,  D_+^s u\in L^2_{\omega^{(0,s-k)}}(\Lambda) \big\};\\
& V:=\big\{v\in L^2_{\omega^{(-k,-s)}}(\Lambda)\,:\,  D^k v\in L^2_{\omega^{(0,k-s)}}(\Lambda) \big\},
\end{split}
\end{equation}
equipped with the norms
\begin{equation}\label{eqasT}
\|u\|_U=\big(\|u\|_{\omega^{(-s,-k)}}^2+\|D_+^s u\|_{\omega^{(0,s-k)}}^2\big)^{1/2};\;\;\;
\|v\|_V=\big(\|v\|_{\omega^{(-k,-s)}}^2+\|D^k v\|_{\omega^{(0,k-s)}}^2\big)^{1/2}.
\end{equation}
For $u\in U$ and $v\in V$,   we  write
\begin{equation}\label{uexpansionABC}
\begin{split}
&u(x)=\sum_{n=k}^\infty \hat u_n  {}^+{\hspace*{-3pt}}J_{n}^{(-s,-k)}(x)=(1-x)^s(1+x)^k \sum_{n=k}^\infty \tilde  u_n P_{n-k}^{(s,k)}(x);
\\& v(x)=\sum_{n=k}^\infty \hat v_n  {}^-{\hspace*{-3pt}}J_{n}^{(-k,-s)}(x)=(1-x)^k (1+x)^s \sum_{n=k}^\infty \tilde  v_n P_{n-k}^{(k,s)}(x),
\end{split}
\end{equation}
where by \eqref{zerokpadd},  $\tilde u_n=2^{-k} d_n^{k,s} \hat u_n$ and $\tilde v_n=(-1)^k 2^{-k} d_n^{k,s} \hat v_n.$

With the above setup,  we  can build in the homogenous boundary conditions and also perform fractional integration by parts (cf. Lemma \ref{integration}).
Hence, a weak form  of \eqref{prob12A} is to  find $u\in U$ such that
\begin{equation}\label{weakformulaIIeven}
a(u,v):=(D_+^s  u,\, D^k v)=(f,v),\quad \forall\,v\in V.
\end{equation}

Let ${}^+\hspace*{-2pt} {\mathcal F}_N^{(-s,-k)}(\Lambda)$ and ${}^-\hspace*{-2pt} {\mathcal F}_N^{(-k,-s)}(\Lambda)$ be the finite-dimensional spaces as defined in  the previous section.  
Then the GJF-Petrov-Galerkin scheme for \eqref{weakformulaIIeven} is  to  find
 $u_N\in {}^+\hspace*{-2pt} {\mathcal F}_N^{(-s,-k)}(\Lambda)$ such that
\begin{equation}\label{SchemeIIeven}
a(u_N, v_N)=(D_+^s  u_N,\, D^k v_N)=(f,v_N),\quad \forall\, v_N\in {}^-\hspace*{-2pt} {\mathcal F}_N^{(-k,-s)}(\Lambda).
\end{equation}

 We next show the unique solvability of \eqref{weakformulaIIeven}-\eqref{SchemeIIeven} by  verifying the
 Babu{\v{s}}ka-Brezzi inf-sup condition of the involved
 bilinear form. For this purpose, we first  show the following equivalence of the norms.
 \begin{lmm}\label{normeqiv} Let $s\in (k-1,k),$ $k\in {\mathbb N}$, and  $U, V$ be the space defined in \eqref{solusps} and \eqref{eqasT}, respectively.  Then we have
 \begin{equation}\label{equivanorms}
 \begin{split}
 & C_{1,s}\|u\|_U\le  \|D_+^s u\|_{\omega^{(0,s-k)}}\le \|u\|_U,\quad \forall u\in U;\\
 &  C_{2,s}\|v\|_V\le  \|D^k v\|_{\omega^{(0,k-s)}}\le \|v\|_V,\quad \forall v\in V,
 \end{split}
 \end{equation}
 where
 \begin{equation}\label{constC1C2}
 C_{1,s}=\bigg(1+\frac{k!}{\Gamma(k+s+1)\Gamma(s+1)}\bigg)^{- 1/2};\quad  C_{2,s}=\bigg(1+
 \frac{\Gamma(s+1)}{k!\Gamma(k+s+1)}\bigg)^{-1/2}.
 \end{equation}
 \end{lmm}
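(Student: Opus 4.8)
The plan is to diagonalize both the $U$- and $V$-norms in the GJF basis and thereby reduce each claimed inequality to a scalar estimate on the expansion coefficients. The upper bounds in \eqref{equivanorms} are immediate: since $\|u\|_U^2=\|u\|_{\omega^{(-s,-k)}}^2+\|D_+^s u\|_{\omega^{(0,s-k)}}^2$, dropping the nonnegative first term gives $\|D_+^s u\|_{\omega^{(0,s-k)}}\le\|u\|_U$, and the same for $v$. Hence the entire content lies in the two lower (Poincar\'e-type) bounds. For each, the lower bound $C_{1,s}\|u\|_U\le\|D_+^su\|_{\omega^{(0,s-k)}}$ is equivalent (after squaring) to the term-by-term inequality $A_n\le (C_{1,s}^{-2}-1)\,B_n$ for all $n\ge k$, where $A_n,B_n$ are the diagonal coefficients of $\|u\|_{\omega^{(-s,-k)}}^2$ and $\|D_+^su\|_{\omega^{(0,s-k)}}^2$, respectively.

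For $u\in U$, I write $u=\sum_{n\ge k}\hat u_n\,{}^+{\hspace*{-3pt}}J_{n}^{(-s,-k)}$ as in \eqref{uexpansionABC}. The orthogonality \eqref{orthogonalityII} gives $\|u\|_{\omega^{(-s,-k)}}^2=\sum_{n\ge k}\gamma_n^{(s,-k)}|\hat u_n|^2$, while the special derivative rule \eqref{derivative+A} (with $\alpha=s$, $\beta=-k$) yields $D_+^s\,{}^+{\hspace*{-3pt}}J_{n}^{(-s,-k)}=\frac{\Gamma(n+s+1)}{n!}P_n^{(0,s-k)}$, so classical Jacobi orthogonality \eqref{jcbiorth} gives $\|D_+^su\|_{\omega^{(0,s-k)}}^2=\sum_{n\ge k}\big(\tfrac{\Gamma(n+s+1)}{n!}\big)^2\gamma_n^{(0,s-k)}|\hat u_n|^2$. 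For $v\in V$ I proceed symmetrically, using $D^k=D_-^k$ (cf. \eqref{2rela}) together with \eqref{derivative-} applied at integer order $k$ (legitimate since the relevant parameter $s>k-1$) to obtain $D^k\,{}^-{\hspace*{-3pt}}J_{n}^{(-k,-s)}=\frac{\Gamma(n+s+1)}{\Gamma(n+s-k+1)}\,{}^-{\hspace*{-3pt}}J_{n}^{(0,-(s-k))}$, and then \eqref{orthogonalityII}/\eqref{jcbiorth} to diagonalize $\|v\|_{\omega^{(-k,-s)}}^2$ and $\|D^kv\|_{\omega^{(0,k-s)}}^2$.

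The crux is a monotonicity computation for the ratios $A_n/B_n$. Using the explicit constant \eqref{co-gamma}, the identity $\gamma_n^{(\alpha,-k)}=2^{-2k}(d_n^{k,\alpha})^2\gamma_{n-k}^{(\alpha,k)}$ stated after \eqref{orthogonalityII}, and $d_n^{k,s}=\tfrac{(n-k)!\,(s+n-k+1)_k}{n!}$, I expect almost all factors (in particular the powers of $2$ and the identical denominators $2n+s-k+1$) to cancel, leaving the clean ratios $\frac{\gamma_n^{(s,-k)}}{(\Gamma(n+s+1)/n!)^2\gamma_n^{(0,s-k)}}=\frac{(n-k)!\,n!}{\Gamma(n-k+s+1)\,\Gamma(n+s+1)}$ for the $U$-estimate, and $\frac{(n-k)!}{n!}\frac{\Gamma(n+s-k+1)}{\Gamma(n+s+1)}$ for the $V$-estimate. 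I then establish that each is strictly decreasing in $n$ by forming the consecutive ratio: for $U$ this is $\frac{(n+1-k)(n+1)}{(n-k+s+1)(n+s+1)}<1$ (using $s>0$), and for $V$ it is $\frac{(n+1-k)(n+s-k+1)}{(n+1)(n+s+1)}<1$ (using $k\ge1$). Consequently each ratio attains its maximum at $n=k$, where direct evaluation gives $\frac{k!}{\Gamma(s+1)\Gamma(k+s+1)}=C_{1,s}^{-2}-1$ and $\frac{\Gamma(s+1)}{k!\,\Gamma(k+s+1)}=C_{2,s}^{-2}-1$, producing exactly the constants in \eqref{constC1C2}.

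The main obstacle is purely computational: carrying out the cancellations carefully enough to expose these simple closed-form ratios. Once that is done, both the monotonicity (a one-line consecutive-ratio bound) and the endpoint evaluation at $n=k$ are routine, and the equivalences $A_n/B_n\le C_{1,s}^{-2}-1$, $\tilde A_n/\tilde B_n\le C_{2,s}^{-2}-1$ deliver the two lower bounds, completing the proof.
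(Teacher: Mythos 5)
Your proposal is correct and follows essentially the same route as the paper: both diagonalize the norms in the GJF basis via the orthogonality relations \eqref{orthogonalityII}, \eqref{D+orthl} and \eqref{jcbiorth}, reduce the lower bounds to bounding the coefficient ratios, and evaluate at $n=k$ to obtain the constants $C_{1,s},C_{2,s}$; your closed-form ratios $\frac{(n-k)!\,n!}{\Gamma(n-k+s+1)\Gamma(n+s+1)}$ and $\frac{(n-k)!\,\Gamma(n+s-k+1)}{n!\,\Gamma(n+s+1)}$ agree with the paper's $\gamma_n^{(s,-k)}/h_{n,0}^{(s,-k)}$ and $\gamma_n^{(s,-k)}/q_n^{(s,k)}$. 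The only (welcome) difference is that you explicitly verify the monotonicity in $n$ via consecutive ratios, a step the paper asserts implicitly when it bounds the supremum by the $n=k$ term.
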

 \begin{proof}    Given the expansion in \eqref{uexpansionABC}, we derive from \eqref{orthogonalityII} and
 \eqref{D+orthl}  that
\begin{equation}\label{uexpansionBCH}
\|u\|_{{\omega^{(-s,-k)}}}^2=\sum_{n=k}^\infty \gamma_n^{(s,-k)} \big|\hat u_n\big|^2;\quad
\big\| D_+^{s} u  \big\|_{\omega^{(0,s-k)}}^2=\sum_{n=k}^\infty  h_{n,0}^{(s,-k)}
\big|\hat u_{n}\big|^2,
\end{equation}
where  by  \eqref{constanth},
\begin{equation}\label{h0sk}
h_{n,0}^{(s,-k)}=\frac{\Gamma^2(n+s+1)}{(n!)^2}\gamma_n^{(0,s-k)}.
\end{equation}
Therefore,
\begin{equation*}\label{uexpansionBCHD}
\|u\|_{{\omega^{(-s,-k)}}}^2=\sum_{n=k}^\infty  \frac{\gamma_n^{(s,-k)}} {h_{n,0}^{(s,-k)}}   h_{n,0}^{(s,-k)} \big|\hat u_n\big|^2
\le  \frac{\gamma_k^{(s,-k)}} {h_{k,0}^{(s,-k)}}  \big\| D_+^{s} u  \big\|_{\omega^{(0,s-k)}}^2,
\end{equation*}
so by \eqref{co-gamma}, \eqref{eqasT} and \eqref{h0sk},
$$
\|u\|^2_U \le \bigg(1+ \frac{\gamma_k^{(s,-k)}} {h_{k,0}^{(s,-k)}}\bigg)  \big\| D_+^{s} u  \big\|_{\omega^{(0,s-k)}}^2
=\frac 1 {C_{1,s}^2}  \big\| D_+^{s} u  \big\|_{\omega^{(0,s-k)}}^2.
$$
 This yields  the first equivalence relation in  \eqref{equivanorms}.

Next, we find from
\eqref{2rela} and \eqref{derivative-} that
  \begin{equation}\label{derivative-C}
D^k\big\{{}^{-}{\hspace*{-3pt}} J_{n}^{(-k,-s)}(x)\big\}
=\frac{\Gamma(n+s+1)} {\Gamma(n+s-k+1)}{}^{-}{\hspace*{-3pt}} J_{n}^{(0,k-s)}(x),
\end{equation}
so we have from the orthogonality \eqref{basicorth}-\eqref{orthogonalityII} and  \eqref{uexpansionABC}  that
\begin{equation}\label{uexpansionBCHE}
\|v\|_{{\omega^{(-k,-s)}}}^2=\sum_{n=k}^\infty \big|\hat v_n\big|^2 \gamma_n^{(s,-k)};\quad
\big\| D^k  v  \big\|_{\omega^{(0,k-s)}}^2=\sum_{n=k}^\infty  q_{n}^{(s,k)}
\big|\hat v_{n}\big|^2,
\end{equation}
where
\begin{equation}\label{qnsk}
q_{n}^{(s,k)} := \frac{\Gamma^2(n+s+1)} {\Gamma^2(n+s-k+1)}\gamma_n^{(0,s-k)}.
\end{equation}
Working out the constants leads to
\begin{equation}\label{uexpansionBCHF}
\|v\|_{{\omega^{(-k, -s)}}}^2 \le  \frac{\gamma_k^{(s,-k)}} {q_{k}^{(s,k)}}  \big\| D^{k} v  \big\|_{\omega^{(0,k-s)}}^2
\le  \frac{\Gamma(s+1)}{k!\Gamma(k+s+1)} \big\| D^{k} v  \big\|_{\omega^{(0,k-s)}}^2.
\end{equation}
Then by \eqref{eqasT},  the second equivalence  follows immediately.
\end{proof}

With the aid of Lemma \ref{normeqiv}, we can show the well-posedness of the weak form \eqref{weakformulaIIeven} and the Petrov-Galerkin scheme \eqref{SchemeIIeven}.
 \begin{thm}\label{uniqueevenII} Let $f\in L^2_{\omega^{(s,k)}}(\Lambda).$   Then the problem  \eqref{weakformulaIIeven} admits a unique solution $u\in U$, and the scheme \eqref{SchemeIIeven} admits a unique solution $u_N\in {}^+\hspace*{-2pt} {\mathcal F}_N^{(-s,-k)}(\Lambda).$
\end{thm}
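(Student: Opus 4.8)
The plan is to establish well-posedness of both \eqref{weakformulaIIeven} and \eqref{SchemeIIeven} through the Babu\v{s}ka--Brezzi (generalized Lax--Milgram) theory, verifying continuity of $a(\cdot,\cdot)$, the inf-sup condition, the transposed non-degeneracy condition, and continuity of the load functional $\ell(v):=(f,v)$. The decisive observation is that, expanding $u=\sum_{n\ge k}\hat u_n\,{}^+{\hspace*{-3pt}}J_{n}^{(-s,-k)}$ and $v=\sum_{n\ge k}\hat v_n\,{}^-{\hspace*{-3pt}}J_{n}^{(-k,-s)}$ as in \eqref{uexpansionABC}, the bilinear form diagonalizes. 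Indeed, \eqref{derivative+} with $(\alpha,\beta)=(s,-k)$ gives $D_+^s\,{}^+{\hspace*{-3pt}}J_{n}^{(-s,-k)}=\frac{\Gamma(n+s+1)}{n!}P_n^{(0,s-k)}$, while \eqref{derivative-C} gives $D^k\,{}^-{\hspace*{-3pt}}J_{n}^{(-k,-s)}=\frac{\Gamma(n+s+1)}{\Gamma(n+s-k+1)}(1+x)^{s-k}P_n^{(0,s-k)}$; the orthogonality \eqref{jcbiorth} of $\{P_n^{(0,s-k)}\}$ against $\omega^{(0,s-k)}$ then yields $a(u,v)=\sum_{n\ge k}a_n b_n\,\hat u_n\hat v_n$, where $a_n^2=h_{n,0}^{(s,-k)}$ and $b_n^2=q_{n}^{(s,k)}$ are strictly positive for $n\ge k$ (cf. \eqref{h0sk}, \eqref{qnsk}), and $\|D_+^s u\|_{\omega^{(0,s-k)}}^2=\sum a_n^2\hat u_n^2$, $\|D^k v\|_{\omega^{(0,k-s)}}^2=\sum b_n^2\hat v_n^2$.

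Continuity of $a$ is then immediate from Cauchy--Schwarz and Lemma \ref{normeqiv}: $|a(u,v)|\le\|D_+^s u\|_{\omega^{(0,s-k)}}\|D^k v\|_{\omega^{(0,k-s)}}\le\|u\|_U\|v\|_V$. For the inf-sup condition I would, given $u\in U$, take the test function $v^\star$ with coefficients $\hat v_n=(a_n/b_n)\hat u_n$; this $v^\star$ lies in $V$ since $\|D^k v^\star\|_{\omega^{(0,k-s)}}=\|D_+^s u\|_{\omega^{(0,s-k)}}$, and it satisfies $a(u,v^\star)=\sum a_n^2\hat u_n^2=\|D_+^s u\|_{\omega^{(0,s-k)}}^2$. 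Hence, using both bounds of Lemma \ref{normeqiv}, $\sup_{v\in V}\frac{a(u,v)}{\|v\|_V}\ge\frac{a(u,v^\star)}{\|v^\star\|_V}\ge C_{2,s}\|D_+^s u\|_{\omega^{(0,s-k)}}\ge C_{1,s}C_{2,s}\|u\|_U$, giving the inf-sup constant $\beta=C_{1,s}C_{2,s}>0$ with $C_{1,s},C_{2,s}$ from \eqref{constC1C2}. The transposed condition is symmetric: for $v\ne0$ the choice $\hat u_n=(b_n/a_n)\hat v_n$ forces $a(u,v)=\sum b_n^2\hat v_n^2=\|D^k v\|_{\omega^{(0,k-s)}}^2>0$, since $v\ne0$ implies some $\hat v_n\ne0$ and $b_n>0$.

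The discrete statement is then even more transparent. Restricting to the index-matched bases $\{{}^+{\hspace*{-3pt}}J_{n}^{(-s,-k)}\}_{k\le n\le N}$ and $\{{}^-{\hspace*{-3pt}}J_{n}^{(-k,-s)}\}_{k\le n\le N}$, which span spaces of equal dimension $N-k+1$, the stiffness matrix is diagonal, $a({}^+{\hspace*{-3pt}}J_{n}^{(-s,-k)},{}^-{\hspace*{-3pt}}J_{m}^{(-k,-s)})=a_n b_n\,\delta_{nm}$ with $a_n b_n>0$, hence invertible, while the load entries $(f,{}^-{\hspace*{-3pt}}J_{m}^{(-k,-s)})$ are finite by the weighted Cauchy--Schwarz inequality $|(f,\phi)|\le\|f\|_{\omega^{(s,k)}}\|\phi\|_{\omega^{(-s,-k)}}$. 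This yields a unique $u_N$; equivalently, the optimal test function $v^\star$ above stays in the discrete test space when $u$ is discrete, so the discrete inf-sup inherits the same $N$-independent constant $\beta$.

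The step I expect to require the most care is the continuity of $\ell$ on $V$ under the stated hypothesis $f\in L^2_{\omega^{(s,k)}}(\Lambda)$. The natural weighted Cauchy--Schwarz pairs $f$ against $\|v\|_{\omega^{(-s,-k)}}$, whereas the $V$-norm directly controls only $\|v\|_{\omega^{(-k,-s)}}$ and $\|D^k v\|_{\omega^{(0,k-s)}}$; since $s<k$ the weight ratio $\big((1-x)/(1+x)\big)^{k-s}$ is unbounded near $x=-1$, so one cannot simply compare the two $L^2$-weighted norms. To close this I would exploit the boundary structure $v\sim(1+x)^s$ near $x=-1$ and $v\sim(1-x)^k$ near $x=1$ built into $V$ through the factors in \eqref{uexpansionABC}, combined with a weighted Hardy/Poincar\'e-type estimate, to confirm the continuous embedding $V\hookrightarrow L^2_{\omega^{(-s,-k)}}(\Lambda)$ and hence $\ell\in V'$. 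The remaining computations, namely evaluating $a_n,b_n$ via \eqref{co-gamma}, \eqref{h0sk}, \eqref{qnsk} and verifying $a_n b_n>0$ for all $n\ge k$, are routine.
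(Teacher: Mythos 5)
Your proposal is correct and, for the substantive steps, is essentially the paper's own proof: the continuity bound $|a(u,v)|\le\|u\|_U\|v\|_V$, the inf-sup constant $C_{1,s}C_{2,s}$ obtained via Lemma \ref{normeqiv}, and the transposed non-degeneracy condition are all established exactly as you describe. In fact your optimal test function is literally the paper's: since $a_n=\frac{\Gamma(n+s+1)}{n!}\sqrt{\smash[b]{\gamma_n^{(0,s-k)}}}$ and $b_n=\frac{\Gamma(n+s+1)}{\Gamma(n+s-k+1)}\sqrt{\smash[b]{\gamma_n^{(0,s-k)}}}$, your choice $\hat v_n=(a_n/b_n)\hat u_n$ coincides with the paper's $\hat v_n^*=\frac{\Gamma(n+s-k+1)}{n!}\hat u_n$ in \eqref{newvstar}, and it yields the same identity $a(u,v_*)=\|D_+^s u\|_{\omega^{(0,s-k)}}^2=\|D^k v_*\|_{\omega^{(0,k-s)}}^2$. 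The one place you diverge is the continuity of the load functional, which you flag as the delicate step and propose to settle via a weighted Hardy-type embedding $V\hookrightarrow L^2_{\omega^{(-s,-k)}}(\Lambda)$. The paper does not do this: it simply estimates $|(f,v)|\le\|f\|_{\omega^{(k,s)}}\|v\|_{\omega^{(-k,-s)}}\le\|f\|_{\omega^{(k,s)}}\|v\|_V$, i.e.\ it pairs $f$ against the weight whose reciprocal \emph{is} controlled by the $V$-norm. You are right that the hypothesis as literally stated, $f\in L^2_{\omega^{(s,k)}}(\Lambda)$, does not pair cleanly with $\|v\|_{\omega^{(-k,-s)}}$ (the ratio of weights blows up at $x=-1$ since $s<k$); the resolution is that the index order in the theorem statement is evidently a typo for $\omega^{(k,s)}$, consistent with the proof and with the later use of $\|f\|_{\omega^{(k,s)}}$. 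So no Hardy inequality is needed, and your proposed embedding argument—whose uniformity you did not establish—can be dropped entirely once the weight is read correctly.
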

\begin{proof}  
 It is clear that we have the continuity of the bilinear form on $U\times V:$
\begin{equation}\label{continuity}
|a(u,v)|\le \|u\|_U\|v\|_V,\quad \forall\, u\in U,\;\; \forall\, v\in V.
\end{equation}
The main task is to  verify  the inf-sup condition, that is,  for any $0\neq u\in U,$
\begin{equation}\label{infsup}
 \sup_{0\neq v\in V}\frac{|a(u,v)|}{\|u\|_U \|v\|_V}\geq \eta:= C_{1,s}C_{2,s},
\end{equation}
where $C_{1,s}$ and $C_{2,s}$ are given in \eqref{constC1C2}.
For this purpose,  we construct   $v_*\in V$ from  the expansion of $u\in U$  in \eqref{uexpansionABC}: 
\begin{equation}\label{newvstar}
v_*(x):= \sum_{n=k}^\infty \hat v_n^*\,   {}^-{\hspace*{-3pt}}J_{n}^{(-k,-s)}(x) \;\;\; {\rm with} \;\;\;
 \hat v_n^*=\frac{\Gamma(n+s-k+1)}{n!} \hat u_n.
\end{equation}
By construction, one verifies by using from the orthogonality \eqref{jcbiorth},     \eqref{uexpansionBCH} and \eqref{uexpansionBCHE} that
\begin{equation}\label{Derivrela}
a(u,v_*)= \big\| D_+^{s} u  \big\|_{\omega^{(0,s-k)}}^2=\big\| D^k  v_*  \big\|_{\omega^{(0,k-s)}}^2.
\end{equation}
Thus, using Lemma \ref{normeqiv}, we infer that  for any $0\not =u\in U,$  there exists $0\not =v_*\in V$ such that
\begin{equation}\label{DerivrelaB}
a(u,v_*)=\big\| D_+^{s} u  \big\|_{\omega^{(0,s-k)}} \big\| D^k  v_*  \big\|_{\omega^{(0,k-s)}}\ge C_{1,s}C_{2,s}\|u\|_U\|v_*\|_V.
\end{equation}
This implies \eqref{infsup}.

It remains to verify  the ``transposed'' inf-sup condition:
\begin{equation}\label{traninf}
\sup_{0\not=u\in U}|a(u,v)|>0,\quad \forall\, 0\not=v\in V.
\end{equation}
It can be shown by a converse process.  In fact,  assuming that $0\not=v_*\in V$ is an arbitrary function, we  construct
$$
u(x)=\sum_{n=k}^\infty \hat u_n  {}^+{\hspace*{-3pt}}J_{n}^{(-s,-k)}(x)\;\;\; {\rm with}\;\;\;
\hat u_n=\frac {n!}{\Gamma(n+s-k+1)} \hat v_n^*.
$$
Then we can derive \eqref{traninf} using  \eqref{Derivrela}.

Finally, if $f\in L^2_{\omega^{(s,k)}}(\Lambda),$ we obtain from the Cauchy-Schwarz inequality  that
$$|(f,v)|\le \|f\|_{\omega^{(k,s)}}\|v\|_{\omega^{(-k,-s)}}\le \|f\|_{\omega^{(k,s)}}\|v\|_V.$$
Therefore, we claim from the Babu{\v{s}}ka-Brezzi theorem (cf. \cite{Bab.Az72})  that the problem \eqref{galformA} has a unique solution.

Note that the  inf-sup condition \eqref{infsup} is also valid for the discrete problem  \eqref{SchemeIIeven}, which therefore admits a unique solution.
\end{proof}

With the help of the  above results, we can follow a standard argument to carry out the error analysis.
 \begin{thm}\label{ThmIIeven} Let $s\in (k-1,k)$ with $k\in {\mathbb N},$ and let $u$ and $u_N$ be  the solutions of  \eqref{weakformulaIIeven} and  \eqref{SchemeIIeven},  respectively. If  $u\in
 U\cap {\mathcal B}_{s,-k}^m(\Lambda)$ with $0\le m\le N,$ then  we have the error estimates:
\begin{equation}\label{thmresultIIeven}
\|u-u_N\|_U  \leq  cN^{-m}\big\|D_+^{s+m}  u\big\|_{\omega^{(m,s-k+m)}}.
\end{equation}
In particular, if  $f^{(m-k)}\in L^2_{\omega^{(m,s-k+m)}}(\Lambda)$ for $m\geq k$, we have
\begin{equation}\label{thmresultIIevenf}
\|u-u_N\|_U  \leq  cN^{-m}\big\| f^{(m-k)}\big\|_{\omega^{(m,s-k+m)}}.
\end{equation}
Here,  $c$ is a positive constant independent $u,N$ and $m.$
\end{thm}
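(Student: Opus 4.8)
The plan is to run the standard Petrov--Galerkin (Babu\v{s}ka--Brezzi) error analysis already prepared in Theorem~\ref{uniqueevenII}, and then feed in the GJF approximation estimate of Theorem~\ref{Th3.1main}. First I would subtract the scheme \eqref{SchemeIIeven} from the weak form \eqref{weakformulaIIeven} to obtain the Galerkin orthogonality $a(u-u_N,v_N)=0$ for all $v_N\in {}^-\hspace*{-2pt}{\mathcal F}_N^{(-k,-s)}(\Lambda)$. The inf-sup condition \eqref{infsup} carries over verbatim to the discrete level: the auxiliary $v_*$ in the proof of Theorem~\ref{uniqueevenII} is built from the GJF coefficients of its argument via $\hat v_n^*=\tfrac{\Gamma(n+s-k+1)}{n!}\hat u_n$, so it maps a finite sum truncated at $N$ to a finite sum truncated at $N$; hence if $u_N\in {}^+\hspace*{-2pt}{\mathcal F}_N^{(-s,-k)}(\Lambda)$ the associated $v_*$ lies in ${}^-\hspace*{-2pt}{\mathcal F}_N^{(-k,-s)}(\Lambda)$, yielding the discrete inf-sup with the same constant $\eta=C_{1,s}C_{2,s}$. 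Combining this with Galerkin orthogonality and the continuity \eqref{continuity} (with constant $1$) gives, for any $w_N\in {}^+\hspace*{-2pt}{\mathcal F}_N^{(-s,-k)}(\Lambda)$,
\begin{equation*}
\eta\,\|u_N-w_N\|_U\le \sup_{0\neq v_N}\frac{|a(u_N-w_N,v_N)|}{\|v_N\|_V}=\sup_{0\neq v_N}\frac{|a(u-w_N,v_N)|}{\|v_N\|_V}\le \|u-w_N\|_U,
\end{equation*}
and a triangle inequality then produces the C\'ea-type quasi-optimality $\|u-u_N\|_U\le (1+\eta^{-1})\inf_{w_N}\|u-w_N\|_U$.

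It then remains to bound the best-approximation error, for which I would take $w_N={}^+\hspace*{-2pt}\pi_N^{(-s,-k)}u$, the projection defined in \eqref{pinuexp}. Since $u-w_N\in U$, the norm equivalence of Lemma~\ref{normeqiv} reduces the full $U$-norm to the seminorm, $\|u-w_N\|_U\le C_{1,s}^{-1}\|D_+^s(u-w_N)\|_{\omega^{(0,s-k)}}$. The key point is that the parameter pair $(\alpha,\beta)=(s,-k)$ belongs to ${}^+\hspace*{-1pt}\Upsilon_2^{\alpha,\beta}$ precisely because $s\in(k-1,k)$ forces $-\alpha-1=-s-1<-k\le-1$; this is exactly the admissibility hypothesis of Theorem~\ref{Th3.1main}. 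Applying that theorem with $l=0$, $\alpha=s$, $\beta=-k$ (recall $u\in {}^+\hspace*{-2pt}{\mathcal B}^m_{s,-k}(\Lambda)$) gives $\|D_+^s(u-{}^+\hspace*{-2pt}\pi_N^{(-s,-k)}u)\|_{\omega^{(0,s-k)}}\le cN^{-m}\|D_+^{s+m}u\|_{\omega^{(m,s-k+m)}}$ for $0\le m\le N$, and chaining the three bounds produces \eqref{thmresultIIeven}.

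Finally, to obtain \eqref{thmresultIIevenf} I would trade the fractional norm of $u$ for an ordinary derivative norm of $f$. Since $D_+^\nu u=f$ with $\nu=s+k$, i.e.\ $D_+^{s+k}u=f$, the integer-shift identity $D_+^{s+l}=(-1)^lD^lD_+^s$ (noted just after \eqref{derimulti}), applied with base exponent $s+k$ and shift $m-k$, yields for $m\ge k$
\begin{equation*}
D_+^{s+m}u=(-1)^{m-k}D^{m-k}\big(D_+^{s+k}u\big)=(-1)^{m-k}f^{(m-k)},
\end{equation*}
so that $\|D_+^{s+m}u\|_{\omega^{(m,s-k+m)}}=\|f^{(m-k)}\|_{\omega^{(m,s-k+m)}}$ and \eqref{thmresultIIevenf} follows from \eqref{thmresultIIeven}. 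I expect the only genuinely delicate step to be the verification that the discrete inf-sup constant equals the continuous one $\eta$, so that the quasi-optimality constant $1+\eta^{-1}$ stays bounded uniformly in $N$; everything else is bookkeeping, and the level-preserving nature of the map $u\mapsto v_*$ in Theorem~\ref{uniqueevenII} makes this immediate with no new estimate required.
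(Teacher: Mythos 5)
Your proposal is correct and follows essentially the same route as the paper's proof: quasi-optimality from the inf-sup condition of Theorem \ref{uniqueevenII}, the choice $\phi={}^+\hspace*{-2pt}\pi_N^{(-s,-k)}u$ combined with Lemma \ref{normeqiv} and Theorem \ref{Th3.1main} (with $l=0$, $\alpha=s$, $\beta=-k$) for \eqref{thmresultIIeven}, and the identity $D_+^{s+k}u=f$ for \eqref{thmresultIIevenf}. The only difference is that you spell out the Galerkin orthogonality and the level-preserving nature of the map $u\mapsto v_*$ to justify the discrete inf-sup constant, details the paper leaves implicit.
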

\begin{proof} Thanks to the inf-sup condition derived in the proof of the previous theorem, we have
\begin{equation}\label{estA1}
\|u-u_N\|_U\leq(1+\eta^{-1}) \|u-\phi\|_{U},\quad \forall\, {\phi\in {}^+\hspace*{-2pt} {\mathcal F}_N^{(-s,-k)}(\Lambda)},
\end{equation}
where $\eta$ is the inf-sup constant in \eqref{infsup}.  Let ${}^+\hspace*{-1pt}\pi_N^{(-s,-k)}$ be the orthogonal projection operator as defined in \eqref{pinuexp}-\eqref{projectorII}.   Taking
$\phi= {}^+\hspace*{-1pt}\pi_N^{(-s,-k)}u$ in \eqref{estA1}, we obtain from  Theorem \ref{Th3.1main} and Lemma \ref{normeqiv} that
\begin{equation}\label{estA2}
\begin{split}
\|u-u_N\|_U &\leq (1+\eta^{-1}) \|u-{}^+\hspace*{-1pt}\pi_N^{(-s,-k)}u\|_{U}\\
&\le  (1+\eta^{-1}) (C_{1,s})^{-1}
\big\|D_+^s (u-{}^+\hspace*{-1pt}\pi_N^{(-s,-k)}u)\big\|_{\omega^{(0,s-k)}}\\
&\le cN^{-m}\big\|D_+^{s+m} u\big\|_{\omega^{(m,s-k+m)}}.
\end{split}
\end{equation}
This yields \eqref{thmresultIIeven}.

From the original equation \eqref{prob12A}, we obtain   $D_+^{\nu} u=D_+^{s+k} u=f$,
so \eqref{thmresultIIevenf} follows from  \eqref{thmresultIIeven} immediately.
\end{proof}

\begin{rem}
 By using a similar procedure as above, we can also construct a spectral Petrov-Galerkin method for the odd order FBVP of order  $\nu=s+k$ and $s\in (k,k+1)$ with $k\in {\mathbb N}:$
 \begin{equation}\label{prob12B}
\begin{split}
  & D_{+}^{\nu}  u(x)=f(x), \;\;  x\in {\Lambda};\quad u^{(l)}(\pm 1)=0,\;\;  l=0, 1,  \cdots, k-1;\;\;  u^{(k)}(1)=0,
  \end{split}
 \end{equation}
 and analyze the error as in Theorem \ref{ThmIIeven}. \qed
\end{rem}

\subsection{Numerical results}

In what follows,  we provide   some  numerical results  to illustrate the accuracy of the proposed  GJF-Petrov-Galerkin schemes and to validate our error analysis.  We gives examples for two typical situations, that is,
the source term $f(x)$ is smooth (so the solution  $u(x)$ is singular), and vice verse. We examine the errors  measured in both  $L^2$-norm and   $\|D^s_+(u-u_N)\|$ (called ``fractional norm'' for simplicity,  to be  in accordance with the analysis), which can be computed from the expansion coefficients. 

\subsubsection{Numerical examples for FIVPs}
We first consider the FIVP \eqref{prob11} with  $f(x)=1+x+\cos x$.
Note that the explicit form of the  exact solution is not available, so we compute a reference exact solution by
using the scheme \eqref{galformA} with  large $N.$  

In view of the error estimate  in Theorem \ref{Th3141}, we know that the errors decay exponentially,  if the source term $f$ is  smooth, despite  that the unknown  solution is singular at $x=1$.  Indeed,  we observe from  Fig.  \ref{Fivps} (left)  that all errors decay  exponentially,  which verify our theoretical results that the convergence rate is only determined by the smoothness of the  source term $f$. Indeed,   we also see that the errors in the fractional norm for different $s$ are indistinguishable, which again show that 
the convergence behaviour solely relies on regularity of  $f.$

\begin{figure}[htp!]
\begin{minipage}{0.495\linewidth}
\begin{center}
\includegraphics[scale=0.40]{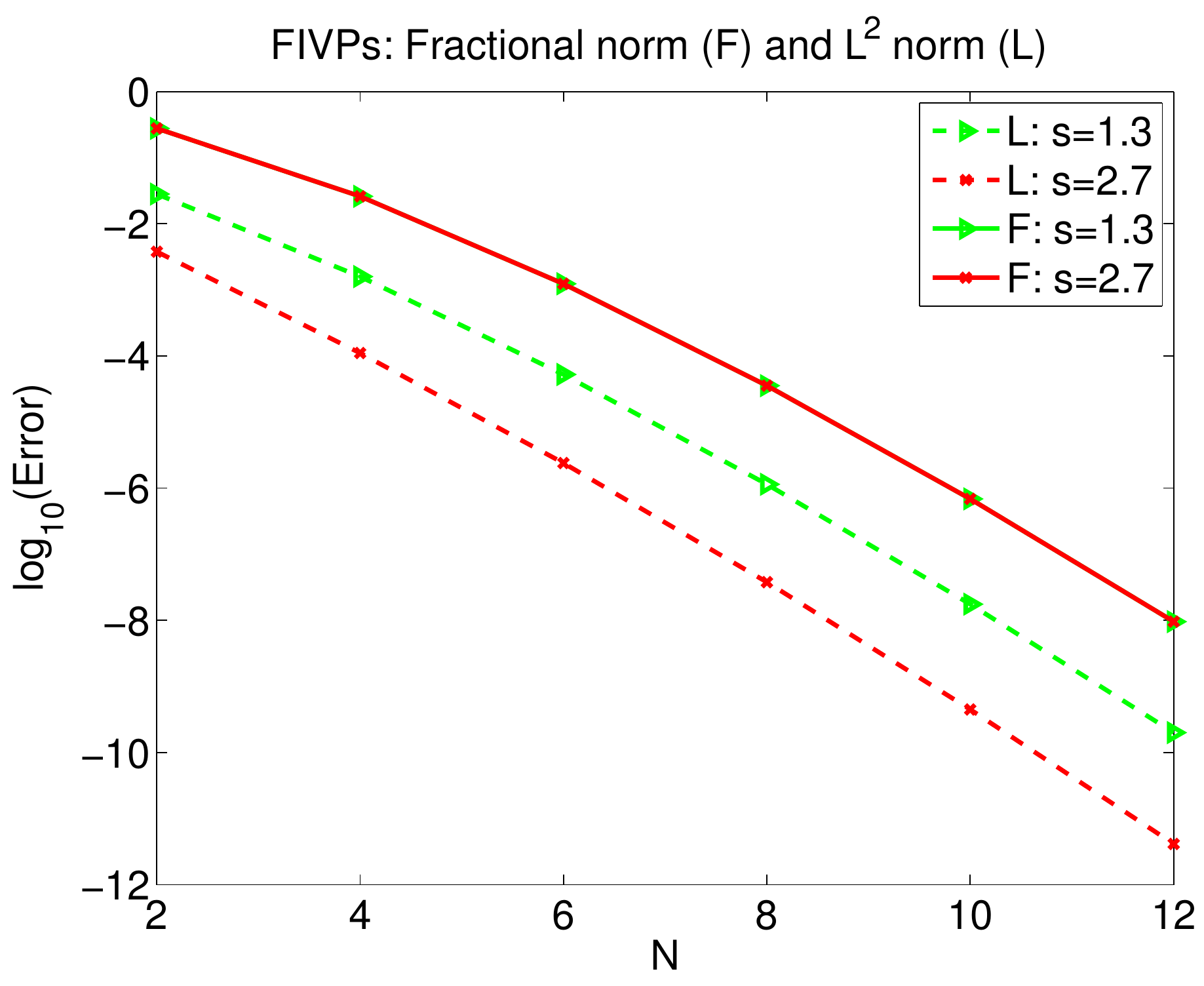}
\end{center}
\end{minipage}
\begin{minipage}{0.495\linewidth}
\begin{center}
\includegraphics[scale=0.40]{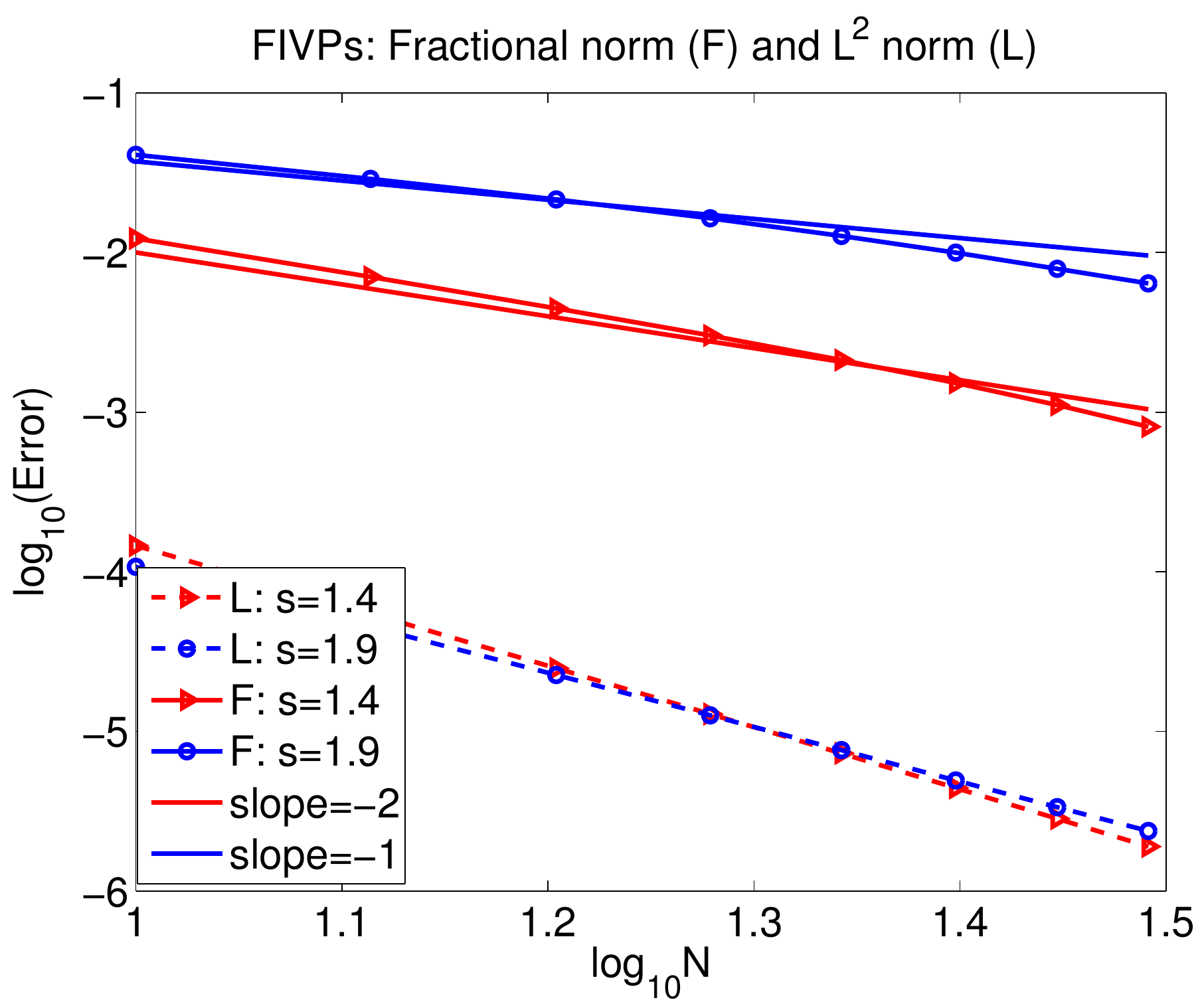}
\end{center}
\end{minipage}
\caption{Convergence of the GJF-Petrov-Galerkin method for the FIVP (\ref{prob11}).  Left:  (\ref{prob11}) with smooth source term $f(x)=1+x+\cos x$. Right:  (\ref{prob11}) with smooth solution: $u(x)=(1-x^3)(1-e^{1-x})$.} \label{Fivps}
\end{figure}

Next, we consider  \eqref{prob11} with $s\in (1,2)$ and the  smooth exact solution:  $u(x)=(1-x^3)(1-e^{1-x})$,  and find the source term $f(x)$ from \eqref{prob11}. It is clear that 
$f(x)$ is singular at $x=1,$ so our error analysis in Theorem \ref{Th3141} predicts  that the convergence rate will be algebraic.  Like in Remark \ref{betterunderstanding},
we calculate  from $u$ that 
$$f^{(m)}=D_+^{s+m}u=O((1-x)^{2-s-m}), \quad x\to 1.$$
Hence, in order to have $\|f^{(m)}\|_{\omega^{(m,m)}}< +\infty$, we need
$$2(2-s-m)+m>-1,\;\;\; \text{i.e.}, \;\;\; m<5-2s,\;\;\;  m\in\mathbb{N}_0.$$
 The convergence behaviours for different $s$ are depicted  in Fig. \ref{Fivps} (right).
 We see that the slopes of the lines agree very well  with the theoretical estimates. 

\subsubsection{Numerical examples for FBVPs with integral boundary conditions}
Now, we consider  the FBVP \eqref{prob12AA} and its GJF-Petrov-Galerkin approximation \eqref{galerkinform2A}. We first take  $f(x)=\sin x$ in \eqref{prob12AA}, and 
compute the reference exact solution as the previous case. 
\begin{figure}[htp!]
\begin{minipage}{0.495\linewidth}
\begin{center}
\includegraphics[scale=0.40]{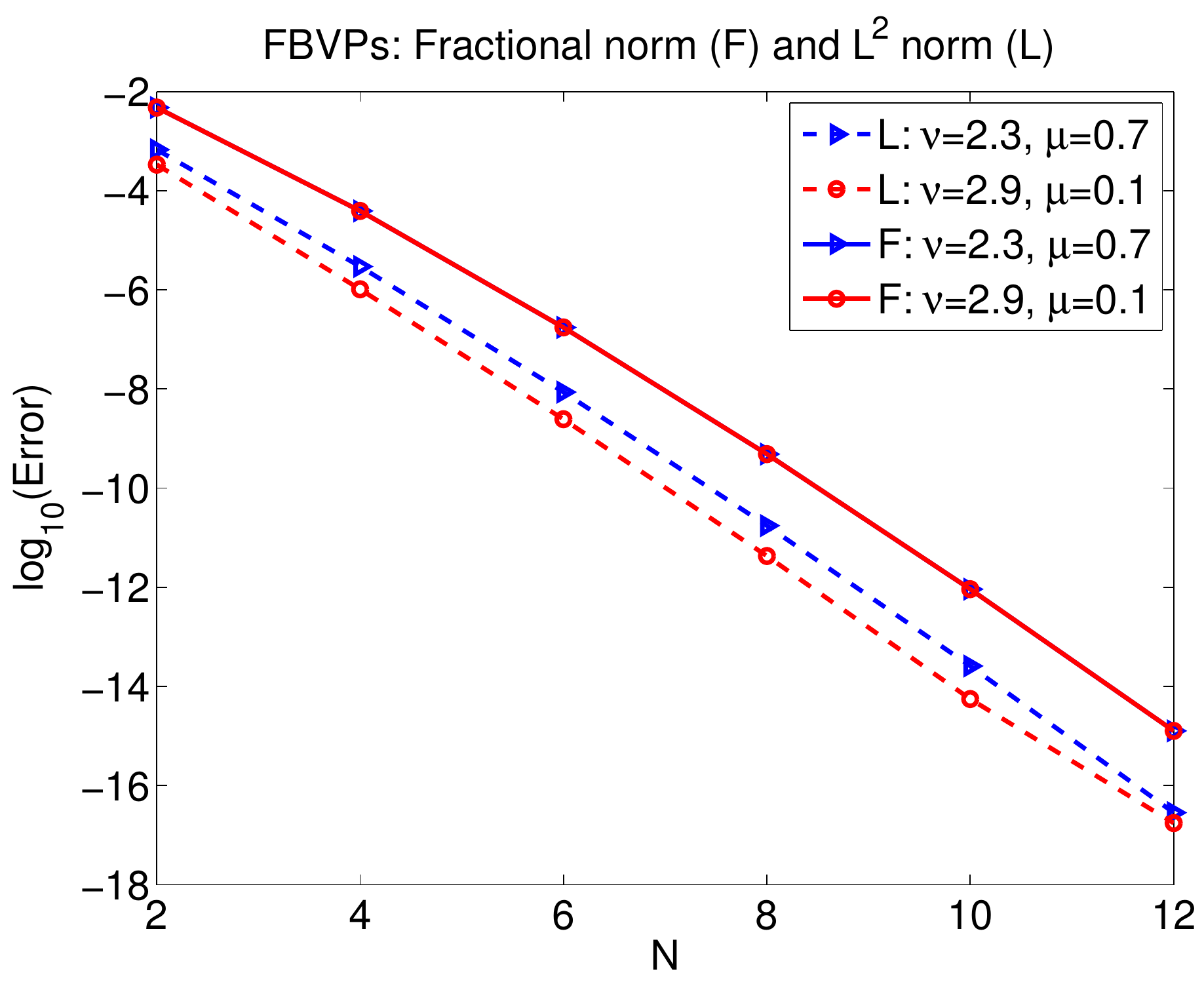}
\end{center}
\end{minipage}
\begin{minipage}{0.495\linewidth}
\begin{center}
\includegraphics[scale=0.40]{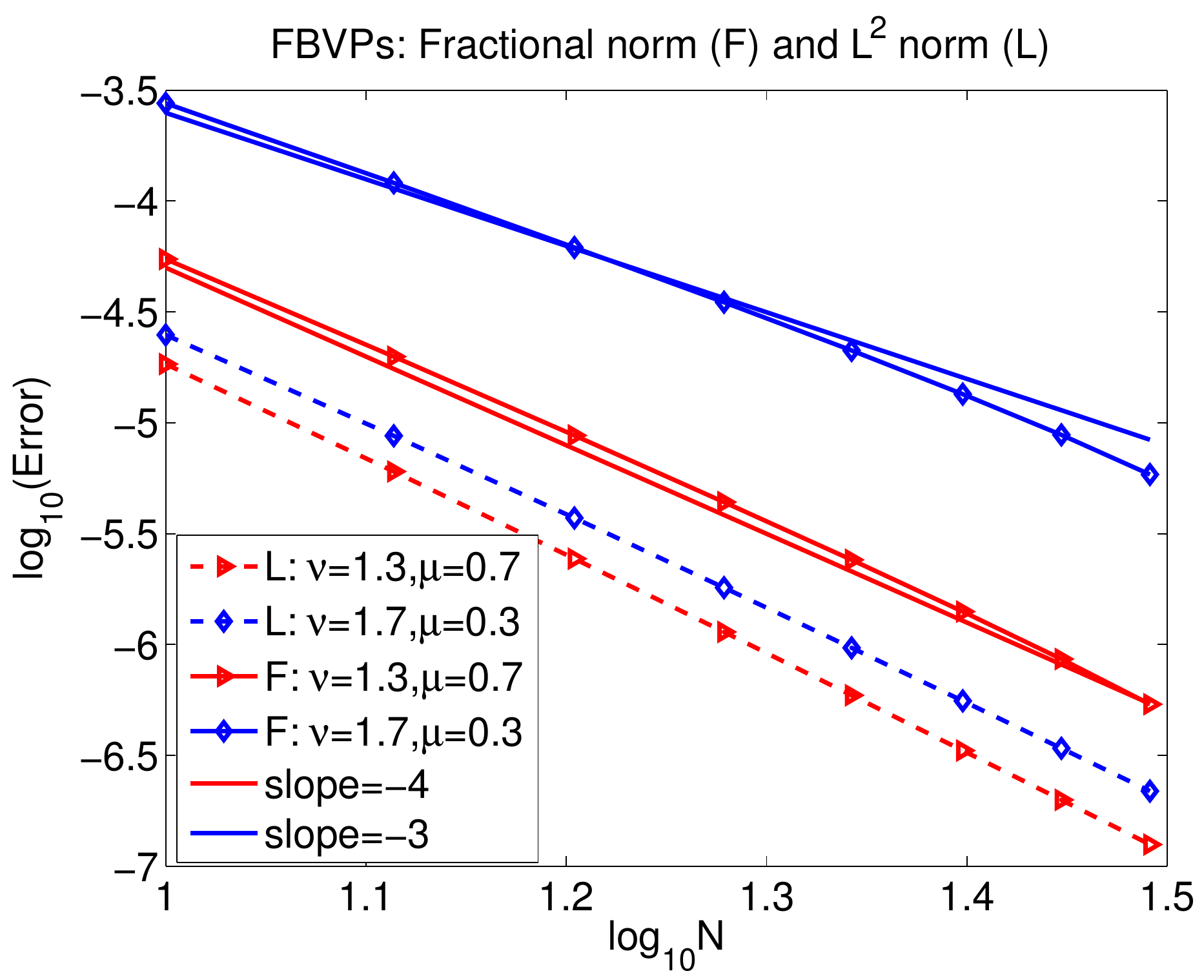}
\end{center}
\end{minipage}
\caption{Convergence of the GJF-Petrov-Galerkin method for the FBVP (\ref{prob12AA}). Left: $f(x)=\sin x$. Right: $u(x)=(1-x)^2(1-x-{6}/{(3+\mu)})$.}\label{FbvpsInt}
\end{figure}
 We  plot the errors for different orders  in   Fig. \ref{FbvpsInt} (left). As expected, the method is truly spectrally  convergent, in agreement with the error estimate \eqref{2ndforderconvergenceC}.  Once again, the convergence rate only depends on the smoothness of $f.$

Next, we take the exact solution to be $u(x)=(1-x)^2(1-x-{6}/{(3+\mu)}), $ and compute $f$ from \eqref{prob12AA}. 
We know from Theorem \ref{convresult} that for $\nu\in(1,2)$ with $\mu=2-\nu$,  we have
 \begin{equation}
\|D^{1-\mu}_+(u-u_N)\|\le c N^{1-m} \|D^{m-\mu}_+u\|_{\omega^{(m-1,m-1)}}.
\end{equation}
A direct calculation shows that, in order for $\|D^{m-\mu}_+u\|_{\omega^{(m-1,m-1)}}<\infty$, we require 
$$2(2-(m-\mu))+m-1>-1,\;\;\; \text{i.e.,}\;\;\; m<4+2\mu,\;\;\; m\in{\mathbb{N}}.$$ Therefore, for $\nu=1.3,\;\mu=0.7$ and $\nu=1.7,\;\mu=0.3$, we have $m<5.4$ and $m<4.6$, respectively, and the expected convergence rate is $m-1$. The numerical errors for this example are plotted in   Fig. \ref{FbvpsInt} (right). We observe that the convergence rates are consistent with our error estimates.

\subsubsection{Numerical examples for FBVPs with homogeneous boundary conditions}
As the last example, we  consider the  FBVP with homogeneous boundary conditions in \eqref{prob12A}.  Similar to the previous cases, we first take a smooth source term $f(x)=xe^x,$ and  plot the errors in  Fig. \ref{Fbvps} (left), which shows  an exponential convergence, as expected from  the error estimates in Theorem \ref{ThmIIeven}.

Next, we take the exact solution $u=(1-x)\sin(\pi x)$ and compute $f$ accordingly from  \eqref{prob12A}.  As before,  we can derive   from the error estimate \eqref{thmresultIIeven}  that the order of convergence $m$ must satisfy  $m<5-2s$ with $m\in {\mathbb N}$. In  Fig. \ref{Fbvps} (right), we plot the errors for
 $\nu=1.4,\;s=0.4$ and $\nu=1.9,\;s=0.9$, respectively.  We again see that  the observed convergence rate agrees  with  the expected rate. 

\begin{figure}[htp!]
\begin{minipage}{0.495\linewidth}
\begin{center}
\includegraphics[scale=0.4]{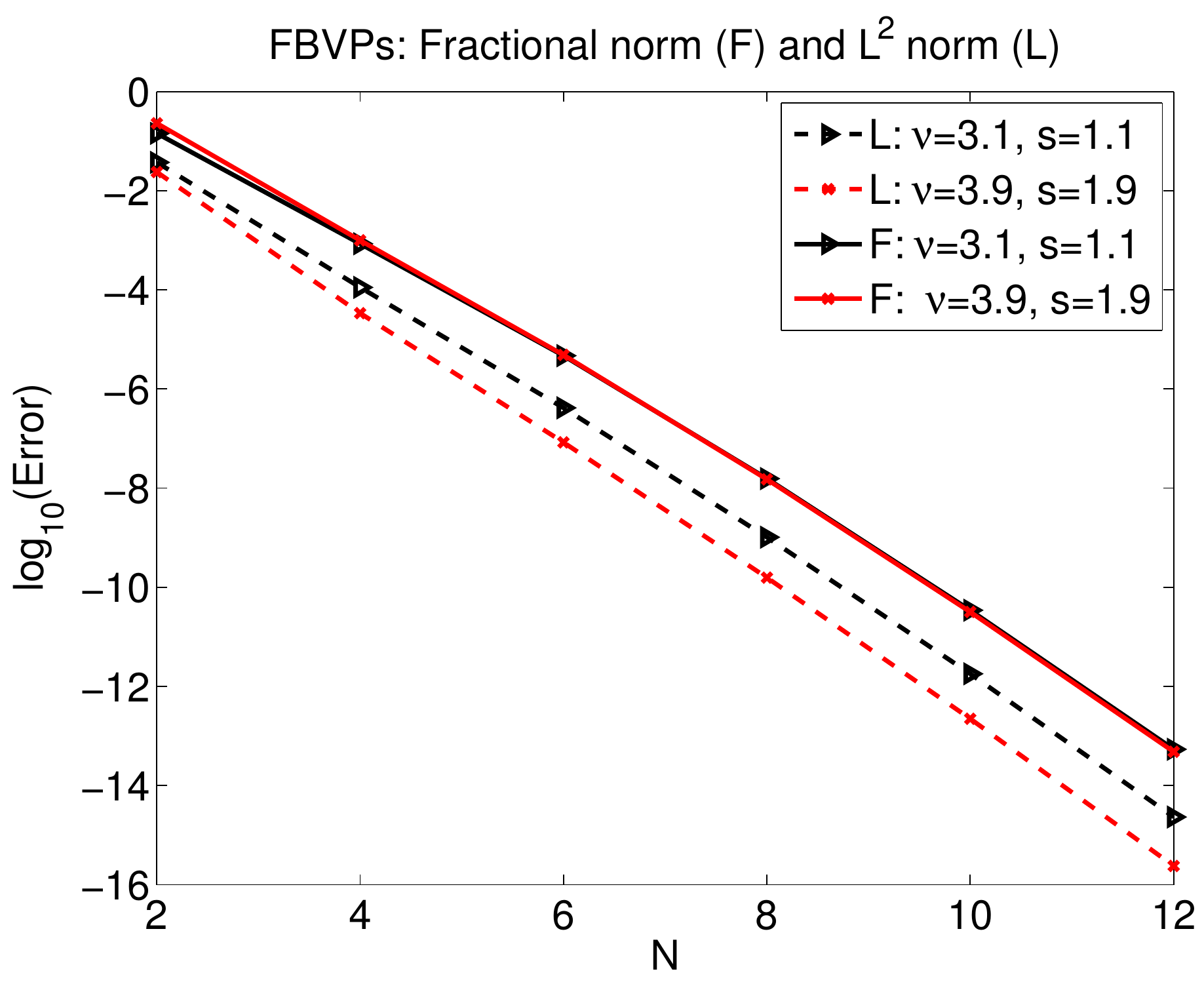}
\end{center}
\end{minipage}
\begin{minipage}{0.495\linewidth}
\begin{center}
\includegraphics[scale=0.4]{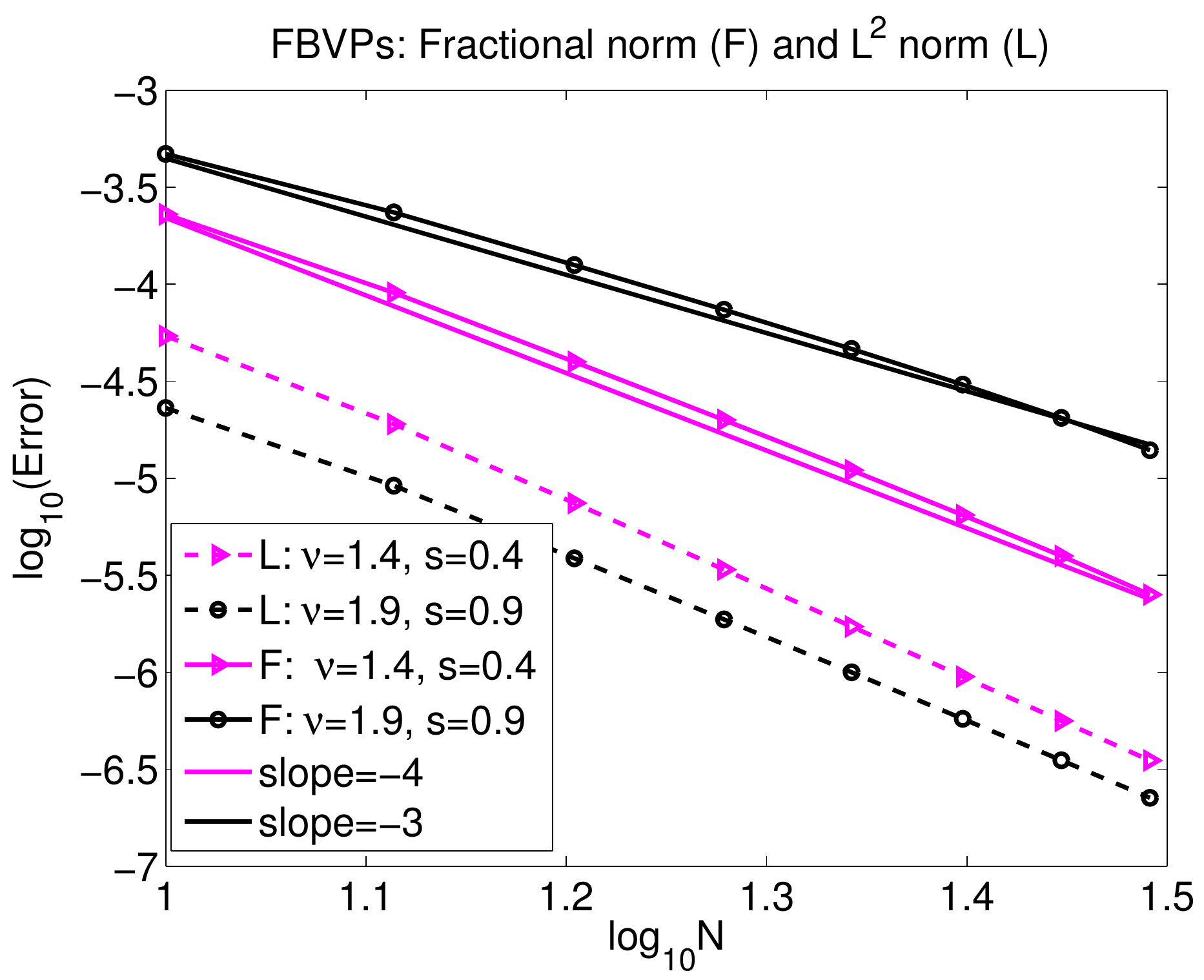}
\end{center}
\end{minipage}
\caption{Convergence of the GJF-Petrov-Galerkin method for the FBVP (\ref{prob12A}).  Left: $f(x)=xe^x$. Right: $u(x)=(1-x)\sin(\pi x)$.}\label{Fbvps}
\end{figure}

Note that in all above examples, the $L^2$ errors are significant smaller than the errors in fractional norms. However, we cannot  justify this rigorously. 
Unlike in the case of integer DEs where one can derive an improved error estimate in the $L^2$-norm using a duality argument,  we are unable to do this in the fractional case largely  due to the lack of regularity in the usual Sobolev norm.  Nevertheless, we see the gain of order in $L^2$-norm from Theorem  \ref{Th3.1main} in the context of approximation by GJFs. 

\section{Extensions, discussions and concluding remarks}\label{sect6app}
\setcounter{equation}{0}
\setcounter{lmm}{0}
\setcounter{thm}{0}

To conclude the paper,  we show that the important formulas of Riemann-Liouville fractional derivatives can be extended in parallel to Caputo derivatives. Consequently, the analysis and results can be generalised to Caputo cases, and  the GJFs enjoy similar remarkable approximability to Caputo FDEs. 
We also provide a summary of main contributions of the paper in the end of this section. 

\subsection{Extension to Caputo derivatives} It is seen that the formulas in Lemma \ref{JacobiForm3} and Theorem \ref{JacobiFormGJFs}  are exceedingly important in the preceding analysis and spectral algorithms involving Riemann-Liouville derivatives.
Remarkably,  similar results are also available  for the  Caputo derivatives.

Like Lemma \ref{JacobiForm3}, we have the following formulas involving Caputo derivatives.
\begin{lemma}\label{CJacobiForm3} Let  $s\in [k-1,k) $ with $ k\in {\mathbb N}$ and $x\in \Lambda.$
 \begin{itemize}
 \item For $\alpha>-1$ and $\beta\in {\mathbb R},$
\begin{equation}\label{Cnewbateman3}
{^C}\hspace*{-2pt}D_+^s\big\{(1-x)^{\alpha+k} P_n^{(\alpha+k,\beta-k)}(x)\big\}
=\frac{\Gamma(n+\alpha+k+1)} {\Gamma(n+\alpha+k-s+1)}(1-x)^{\alpha+k-s} P_n^{(\alpha+k-s,\beta-k+s)}(x).
\end{equation}
\item For   $ \alpha\in {\mathbb R}$ and $ \beta>-1,$
\begin{equation}\label{Cnewbatemanam3s}
{^C}\hspace*{-2pt}D_-^s\big\{(1+x)^{\beta+k} P_n^{(\alpha-k,\beta+k)}(x)\big\}
=\frac{\Gamma(n+\beta+k+1)} {\Gamma(n+\beta+k-s+1)}(1+x)^{\beta+k-s} P_n^{(\alpha-k+s,\beta+k-s)}(x).
\end{equation}
\end{itemize}
\end{lemma}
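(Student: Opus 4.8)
The plan is to reduce the two Caputo identities \eqref{Cnewbateman3}--\eqref{Cnewbatemanam3s} to the Riemann--Liouville formulas already established in Lemma \ref{JacobiForm3}, by showing that the relevant boundary values vanish so that the Caputo and Riemann--Liouville derivatives agree on the functions in question. I focus on \eqref{Cnewbateman3}; the second identity is entirely analogous (or follows via the reflection \eqref{parity}).

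First I would set $v(x):=(1-x)^{\alpha+k}P_n^{(\alpha+k,\beta-k)}(x)$ and verify that $v^{(j)}(1)=0$ for $0\le j\le k-1$. Since $P_n^{(\alpha+k,\beta-k)}$ is a genuine polynomial, the Leibniz rule writes $v^{(j)}$ as a finite sum of terms each carrying a factor $(1-x)^{\alpha+k-i}$ with $0\le i\le j\le k-1$; because $\alpha>-1$ gives $\alpha+k-i\ge \alpha+1>0$, every such term tends to $0$ at $x=1$. This is the only place where the hypothesis $\alpha>-1$ enters, and it is the point one must get right: it is precisely what converts the Caputo statement into a Riemann--Liouville one. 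With these boundary values killed, relation \eqref{RL=Ca} of Remark \ref{RLCa} yields ${^C}\hspace*{-2pt}D_+^s v = D_+^s v$, so it suffices to compute the Riemann--Liouville derivative of $v$.

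For that computation I apply \eqref{newbateman3} after the reparametrization $\alpha\mapsto\alpha+k-s$, $\beta\mapsto\beta-k+s$. Then the exponent and upper parameter $\alpha+s$ in \eqref{newbateman3} both become $\alpha+k$ and the lower parameter $\beta-s$ becomes $\beta-k$, so the left-hand side is exactly $D_+^s v$. The admissibility condition $\alpha>-1$ of Lemma \ref{JacobiForm3} reads, after the shift, $\alpha+k-s>-1$, which holds because $s<k$ forces $\alpha+k-s>\alpha>-1$, while $\beta$ stays unrestricted as required. Matching the Gamma factors, $\Gamma(n+(\alpha+k-s)+s+1)=\Gamma(n+\alpha+k+1)$ and $\Gamma(n+(\alpha+k-s)+1)=\Gamma(n+\alpha+k-s+1)$, together with the output factor $(1-x)^{\alpha+k-s}P_n^{(\alpha+k-s,\beta-k+s)}(x)$, reproduces the right-hand side of \eqref{Cnewbateman3} verbatim.

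Finally, \eqref{Cnewbatemanam3s} follows by the same three steps applied to $w(x):=(1+x)^{\beta+k}P_n^{(\alpha-k,\beta+k)}(x)$: now $\beta>-1$ guarantees $w^{(j)}(-1)=0$ for $0\le j\le k-1$, the left-sided case of \eqref{RL=Ca} gives ${^C}\hspace*{-2pt}D_-^s w = D_-^s w$, and formula \eqref{newbatemanam3s} is invoked with the shift $\beta\mapsto\beta+k-s$, $\alpha\mapsto\alpha-k+s$ (the condition $\beta+k-s>-1$ again being automatic since $s<k$). No serious obstacle is expected beyond the elementary boundary-vanishing bookkeeping, which is the heart of the reduction.
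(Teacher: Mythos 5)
Your proof is correct, but it follows a different route from the paper's. You reduce the Caputo identity to the Riemann--Liouville one: you check via Leibniz that $v^{(j)}(1)=0$ for $0\le j\le k-1$ (which is exactly where $\alpha>-1$ enters), invoke \eqref{RL=Ca} to replace ${^C}\hspace*{-2pt}D_+^s$ by $D_+^s$, and then apply \eqref{newbateman3} with the shifted parameters $\alpha\mapsto\alpha+k-s$, $\beta\mapsto\beta-k+s$, noting that $s<k$ keeps the shifted parameter above $-1$. The paper instead works directly from the definition ${^C}\hspace*{-2pt}D_+^s v=(-1)^k I_+^{k-s}(D^k v)$: it first computes $D^k$ of the function by the integer-order case of \eqref{newbateman3}, landing on $(-1)^k\frac{\Gamma(n+\alpha+k+1)}{\Gamma(n+\alpha+1)}(1-x)^\alpha P_n^{(\alpha,\beta)}(x)$, and then applies the Bateman integral formula \eqref{newbateman} with $\rho=k-s$. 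Your version makes transparent the role of the hypothesis $\alpha>-1$ and recycles the RL formula wholesale, at the cost of leaning on the RL--Caputo relation of Lemma \ref{relation-RL-Caputo} (whose validity implicitly requires the absolute continuity of $v^{(k-1)}$, which does hold here since $v^{(k-1)}\sim(1-x)^{\alpha+1}$ near $x=1$); the paper's version is more self-contained, needing only the Bateman machinery and the definition of the Caputo derivative. Both arguments are sound and of comparable length.
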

\begin{proof}  Let us first derive \eqref{Cnewbateman3}.  In view of  $D_+^k=(-1)^k D^k$  (cf. \eqref{2rela}),  we obtain from \eqref{newbateman3} that
\begin{equation}\label{newbateman3q}
D^k\big\{(1-x)^{\alpha+k} P_n^{(\alpha+k,\beta-k)}(x)\big\}
=(-1)^k \frac{\Gamma(n+\alpha+k+1)} {\Gamma(n+\alpha+1)}(1-x)^\alpha P_n^{(\alpha,\beta)}(x).
\end{equation}
By Definition \ref{RLFDdefn}, we have  ${^C}\hspace*{-2pt}D_+^s v=(-1)^k I_+^{k-s}(D^k v),$ so using \eqref{newbateman} with $\rho=k-s$ and \eqref{newbateman3q} leads to
\begin{equation*}\label{Cnewbateman3s}
\begin{split}
{^C}\hspace*{-2pt}D_+^s\big\{(1-x)^{\alpha+k} P_n^{(\alpha+k,\beta-k)}(x)\big\}
& =\frac{\Gamma(n+\alpha+k+1)} {\Gamma(n+\alpha+1)}I_+^{k-s}\big\{(1-x)^\alpha P_n^{(\alpha,\beta)}(x)\big\}\\
&=\frac{\Gamma(n+\alpha+k+1)} {\Gamma(n+\alpha+k-s+1)}(1-x)^{\alpha+k-s} P_n^{(\alpha+k-s,\beta-k+s)}(x).
\end{split}
\end{equation*}
This  yields \eqref{Cnewbateman3}. The formula \eqref{Cnewbatemanam3s} can be derived similarly.
\end{proof}

The counterpart of  Theorem \ref{JacobiFormGJFs} takes a slightly different form in the range of parameters.
\begin{thm}\label{CJacobiFormGJFs} Let  $s\in [k-1,k) $ with $ k\in {\mathbb N}$ and $x\in \Lambda.$
 \begin{itemize}
 \item For $\alpha>k-1$ and $\beta\in {\mathbb R},$
\begin{equation}\label{Cderivative+}
{^C}\hspace*{-2pt}D_+^s\big\{{}^+{\hspace*{-3pt}}J_{n}^{(-\alpha,\beta)}(x)\big\}
=\frac{\Gamma(n+\alpha+1)} {\Gamma(n+\alpha-s+1)}{}^+{\hspace*{-3pt}}J_{n}^{(-\alpha+s,\beta+s)}(x).
\end{equation}
\item For   $ \alpha\in {\mathbb R}$ and $ \beta>k-1,$
\begin{equation}\label{Cderivative-}
{^C}\hspace*{-2pt}D_-^s \big\{{}^{-}{\hspace*{-3pt}} J_{n}^{(\alpha,-\beta)}(x)\big\}
=\frac{\Gamma(n+\beta+1)} {\Gamma(n+\beta-s+1)}{}^{-}{\hspace*{-3pt}} J_{n}^{(\alpha+s,-\beta+s)}(x).
\end{equation}
\end{itemize}
\end{thm}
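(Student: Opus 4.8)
The plan is to reduce the statement directly to Lemma~\ref{CJacobiForm3} by a change of parameters, using that ${}^+{\hspace*{-3pt}}J_{n}^{(-\alpha,\beta)}(x)=(1-x)^\alpha P_n^{(\alpha,\beta)}(x)$ by Definition~\ref{defnGJacbiII}. Since $\alpha>k-1$, I would set $\tilde\alpha:=\alpha-k>-1$ and rewrite the GJF as $(1-x)^{\tilde\alpha+k}P_n^{(\tilde\alpha+k,\beta)}(x)$, which now has exactly the shape of the argument on the left of \eqref{Cnewbateman3}. This is the whole idea: the GJF ${}^+{\hspace*{-3pt}}J_{n}^{(-\alpha,\beta)}$ is already of the form handled by the lemma once we absorb the integer part $k$ into the lemma's own shift.

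First I would apply Lemma~\ref{CJacobiForm3} with its parameter $\alpha$ replaced by $\tilde\alpha=\alpha-k$ and its parameter $\beta$ replaced by $\beta+k$. The admissibility conditions of the lemma are met: $\tilde\alpha>-1$ is precisely $\alpha>k-1$, while $\beta+k\in\mathbb R$ is unrestricted. Under this substitution the argument on the left of \eqref{Cnewbateman3} becomes $(1-x)^{\alpha}P_n^{(\alpha,\beta)}(x)={}^+{\hspace*{-3pt}}J_{n}^{(-\alpha,\beta)}(x)$, so the left-hand side of \eqref{Cnewbateman3} is exactly ${^C}\hspace*{-2pt}D_+^s\{{}^+{\hspace*{-3pt}}J_{n}^{(-\alpha,\beta)}(x)\}$, the object in \eqref{Cderivative+}.

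Next I would read off the right-hand side. The same substitution turns the right side of \eqref{Cnewbateman3} into
\[
\frac{\Gamma(n+\alpha+1)}{\Gamma(n+\alpha-s+1)}\,(1-x)^{\alpha-s}P_n^{(\alpha-s,\beta+s)}(x).
\]
Because $s\in[k-1,k)$ and $\alpha>k-1$, we have $\alpha-s>k-1-k=-1$, so $(1-x)^{\alpha-s}P_n^{(\alpha-s,\beta+s)}(x)$ is a legitimate GJF and equals ${}^+{\hspace*{-3pt}}J_{n}^{(-\alpha+s,\beta+s)}(x)$ by Definition~\ref{defnGJacbiII}. This is precisely \eqref{Cderivative+}. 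The companion formula \eqref{Cderivative-} follows in the identical manner from \eqref{Cnewbatemanam3s}, replacing the lemma's $\beta$ by $\beta-k$ (legitimate since $\beta>k-1$) and its $\alpha$ by $\alpha+k$, and using $\beta-s>-1$; alternatively it follows from the parity relation \eqref{parity2} together with the fact that the reflection $x\mapsto-x$ interchanges the left and right Caputo operators.

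There is no genuine analytic difficulty here—the content has already been placed in Lemma~\ref{CJacobiForm3}—so the only point requiring care, and the one I would verify explicitly, is bookkeeping of the shifted parameters so they remain in the ranges where the cited formulas and the GJF definition are valid. In particular, the hypothesis $\alpha>k-1$ (rather than $\alpha>s-1$ as in the Riemann--Liouville Theorem~\ref{JacobiFormGJFs}) is exactly what guarantees $\tilde\alpha=\alpha-k>-1$; this explains, and is the sole reason for, the slightly stronger requirement in the Caputo version.
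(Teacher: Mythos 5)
Your proposal is correct and is essentially identical to the paper's own proof, which likewise obtains \eqref{Cderivative+} by substituting $(\alpha-k,\beta+k)$ for $(\alpha,\beta)$ in \eqref{Cnewbateman3} and invoking Definition \ref{defnGJacbiII}, with \eqref{Cderivative-} handled symmetrically. Your extra bookkeeping (checking $\alpha-s>-1$ and explaining why the hypothesis is $\alpha>k-1$ rather than $\alpha>s-1$) is accurate and consistent with the paper.
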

\begin{proof}   With  $(\alpha-k,\beta+k)$ in place of  $(\alpha,\beta)$ in \eqref{Cnewbateman3}, we obtain \eqref{Cderivative+} immediately from  the definition  \eqref{GJFs+}.
The rule \eqref{Cderivative-} can be obtained in the same fashion.
\end{proof}

Taking  $s=\alpha$ in  \eqref{Cderivative+}  leads to that  for  $\alpha>0$ and $\beta\in {\mathbb R},$
\begin{equation}\label{Cderivative+A}
{^C}\hspace*{-2pt}D_+^s \big\{{}^+{\hspace*{-3pt}}J_{n}^{(-\alpha,\beta)}(x)\big\}=\frac{\Gamma(n+\alpha+1)} {n!}P_n^{(0,\alpha+\beta)}(x).
\end{equation}
Similarly, we derive from \eqref{Cderivative-} an important formula, that is, for $\alpha\in {\mathbb R}$ and real $\beta>0,$
\begin{equation}\label{Cderivative+B}
D_-^\beta\big\{{}^-{\hspace*{-3pt}}J_{n}^{(\alpha,-\beta)}(x)\big\}
=\frac{\Gamma(n+\beta+1)} {n!}P_n^{(\alpha+\beta,0)}(x).
\end{equation}
Indeed, the GJFs with parameter $\alpha>0$ or $\beta>0$  meets the conditions in  \eqref{RL=Ca}, so  we have  the same formulas as  in
\eqref{derivative+A}-\eqref{derivative+B} for the Riemann-Liouville derivatives.

With the aid of the above derivative formulas,  we can establish the GJF approximations in weighted Sobolev spaces,  and
develop efficient spectral methods for FDEs involving Caputo fractional derivatives accordingly.  Here, we omit the details.

\subsection{Discussions and concluding remarks}
We considered in this paper spectral approximation of FDEs by introducing a class of priorly defined GJFs.

Our main contributions are twofold:
\begin{itemize}
 \item Introduced a new class of GJFs, which extend the range of definition of
 polyfractomials \cite{zayernouri2013fractional} so that high-order fractional derivatives can be treated, revealed their relations with fractional derivatives, and studied their approximation properties.
 \item Constructed Petrov-Galerkin spectral methods for a class of prototypical FDEs, including arbitrarily high-order FIVPs and FBVPs which have not been numerically studied before,  which led to  sparse matrices, and
 derived error estimates with convergence rate only depending on the smoothness of  data. In particular, if the data function is analytic, we obtain exponential convergence, despite the fact that the solution is singular.
\end{itemize}

The results presented in this paper indicate that, at least for the simple FDEs considered here, one can develop spectral methods to solve them with the same kind of computational complexity and accuracy as one solve for usual PDEs.

 This is first but important step towards developing efficient and accurate
 spectral methods for solving FDEs. While we have only considered a class of very simple prototypical FDEs, the general principles  and the approximation results developed in this paper open up new possibilities for dealing with more general FDEs.


\begin{thebibliography}{10}

\bibitem{Abr.I64}
M.~Abramovitz and I.A. Stegun.
\newblock {\em {Handbook of Mathematical Functions}}.
\newblock Dover, New York, 1972.

\bibitem{Adam75}
R.A. Adams.
\newblock {\em Sobolev Spaces}.
\newblock Acadmic Press, New York, 1975.

\bibitem{Andrews99}
G.E. Andrews, R.~Askey, and R.~Roy.
\newblock {\em {Special Functions}}, volume~71 of {\em Encyclopedia of
  Mathematics and its Applications}.
\newblock Cambridge University Press, Cambridge, 1999.

\bibitem{Bab.Az72}
I.~Babu{\v{s}}ka and A.K. Aziz.
\newblock Survey lectures on the mathematical foundations of the finite element
  method.
\newblock In {\em The mathematical foundations of the finite element method
  with applications to partial differential equations (Proc. Sympos., Univ.
  Maryland, Baltimore, Md., 1972)}, pages 1--359. Academic Press, New York,
  1972.
\newblock With the collaboration of G. Fix and R. B. Kellogg.

\bibitem{Bateman1909}
H.~Bateman.
\newblock The solution of linear differential equations by means of definite
  integrals.
\newblock {\em Trans. Camb. Phil. Soc.}, 21:171--196, 1909.

\bibitem{Diet10}
K.~Diethelm.
\newblock {\em The Analysis of Fractional Differential Equations, Lecture Notes
  in Math., Vol. 2004}.
\newblock Springer, Berlin, 2010.

\bibitem{DGLZ13b}
Q.~Du, M.~Gunzburger, R.B. Lehoucq, and K.~Zhou.
\newblock Analysis and approximation of nonlocal diffusion problems with volume
  constraints.
\newblock {\em SIAM Rev.}, 54(4):667--696, 2012.

\bibitem{EHR07}
V.J. Ervin, N.~Heuer, and J.P. Roop.
\newblock Numerical approximation of a time dependent, nonlinear,
  space-fractional diffusion equation.
\newblock {\em SIAM J. Numer. Anal.}, 45(2):572--591, 2007.

\bibitem{Erv.R07}
V.J. Ervin and J.P. Roop.
\newblock Variational solution of fractional advection dispersion equations on
  bounded domains in $\mathbb{R}^d$.
\newblock {\em Numerical Methods for Partial Differential Equations},
  23(2):256--281, 2007.

\bibitem{Guo.SW06}
B.Y. Guo, J.~Shen, and L.L. Wang.
\newblock Optimal spectral-{G}alerkin methods using generalized {J}acobi
  polynomials.
\newblock {\em J. Sci. Comput.}, 27(1-3):305--322, 2006.

\bibitem{Guo.SW09}
B.Y. Guo, J.~Shen, and L.L. Wang.
\newblock Generalized {J}acobi polynomials/functions and their applications.
\newblock {\em Appl. Numer. Math.}, 59(5):1011--1028, 2009.

\bibitem{JLZ13}
B.T. Jin, R.~Lazarov, and Z.~Zhou.
\newblock Error estimates for a semidiscrete finite element method for
  fractional order parabolic equations.
\newblock {\em SIAM J. Numer. Anal.}, 51(1):445--466, 2013.

\bibitem{Kil.S06}
A.A. Kilbas, H.M. Srivastava, and J.J. Trujillo.
\newblock {\em Theory and Applications of Fractional Differential Equations},
  volume 204 of {\em North-Holland Mathematics Studies}.
\newblock Elsevier Science B.V., Amsterdam, 2006.

\bibitem{FSLP}
M.~Klimek and O.P. Agrawal.
\newblock Fractional {S}turm-{L}iouville problem.
\newblock {\em Comput. Math. Appl.}, 66(5):795--812, 2013.

\bibitem{koekoek2010hypergeometric}
R.~Koekoek, P.~Lesky, and R.~Swarttouw.
\newblock {\em {Hypergeometric Orthogonal Polynomials and Their q-Analogues}}.
\newblock Springer, 2010.

\bibitem{kuijlaars2005orthogonality}
A.~Kuijlaars, A.~Mart{\i}nez-Finkelshtein, and R.~Orive.
\newblock Orthogonality of {J}acobi polynomials with general parameters.
\newblock {\em Electronic Transactions on Numerical Analysis}, 19(1):1--17,
  2005.

\bibitem{LZL12}
C.P. Li, F.H. Zeng, and F.W. Liu.
\newblock Spectral approximations to the fractional integral and derivative.
\newblock {\em Fractional Calculus and Applied Analysis}, 15(3):383--406, 2012.

\bibitem{Li.X09}
X.~Li and C.~Xu.
\newblock A space-time spectral method for the time fractional diffusion
  equation.
\newblock {\em SIAM Journal on Numerical Analysis}, 47(3):2108--2131, 2009.

\bibitem{Li.X10}
X.~Li and C.~Xu.
\newblock Existence and uniqueness of the weak solution of the space-time
  fractional diffusion equation and a spectral method approximation.
\newblock {\em Communications in Computational Physics}, 8(5):1016, 2010.

\bibitem{LAT04}
F.~Liu, V.~Anh, and I.~Turner.
\newblock Numerical solution of the space fractional {F}okker-{P}lanck
  equation.
\newblock In {\em Proceedings of the {I}nternational {C}onference on {B}oundary
  and {I}nterior {L}ayers---{C}omputational and {A}symptotic {M}ethods ({BAIL}
  2002)}, volume 166, pages 209--219, 2004.

\bibitem{MST06}
M.M. Meerschaert, H.P. Scheffler, and C.~Tadjeran.
\newblock Finite difference methods for two-dimensional fractional dispersion
  equation.
\newblock {\em Journal of Computational Physics}, 211(1):249--261, 2006.

\bibitem{Mee.T04}
M.M. Meerschaert and C.~Tadjeran.
\newblock Finite difference approximations for fractional advection-dispersion
  flow equations.
\newblock {\em J. Comput. Appl. Math.}, 172(1):65--77, 2004.

\bibitem{Met.K00}
R.~Metzler and J.~Klafter.
\newblock The random walk's guide to anomalous diffusion: a fractional dynamics
  approach.
\newblock {\em Physics Reports}, 339(1):1--77, 2000.

\bibitem{Pod99}
I.~Podlubny.
\newblock {\em Fractional Differential Equations}, volume 198 of {\em
  Mathematics in Science and Engineering}.
\newblock Academic Press Inc., San Diego, CA, 1999.
\newblock An introduction to fractional derivatives, fractional differential
  equations, to methods of their solution and some of their applications.

\bibitem{ShenTangWang2011}
J.~Shen, T.~Tang, and L.L. Wang.
\newblock {\em {Spectral Methods: Algorithms, Analysis and Applications}},
  volume~41 of {\em Series in Computational Mathematics}.
\newblock Springer-Verlag, Berlin, Heidelberg, 2011.

\bibitem{Sun.W06}
Z.Z. Sun and X.N. Wu.
\newblock A fully discrete difference scheme for a diffusion-wave system.
\newblock {\em Appl. Numer. Math.}, 56(2):193--209, 2006.

\bibitem{szeg75}
G.~Szeg\"o.
\newblock {\em Orthogonal Polynomials (Fourth Edition)}.
\newblock AMS Coll. Publ., 1975.

\bibitem{Tad.M07}
C.~Tadjeran and M.M. Meerschaert.
\newblock A second-order accurate numerical method for the two-dimensional
  fractional diffusion equation.
\newblock {\em Journal of Computational Physics}, 220(2):813--823, 2007.

\bibitem{Wan.B12}
H.~Wang and T.S. Basu.
\newblock A fast finite difference method for two-dimensional space-fractional
  diffusion equations.
\newblock {\em SIAM Journal on Scientific Computing}, 34(5):2444--2458, 2012.

\bibitem{zayernouri2013fractional}
M.~Zayernouri and G.E. Karniadakis.
\newblock Fractional {S}turm-{L}iouville eigen-problems: theory and numerical
  approximation.
\newblock {\em J. Comput. Phys.}, 252:495--517, 2013.

\bibitem{Zayernouri.K14}
M.~Zayernouri and G.E. Karniadakis.
\newblock Fractional spectral collocation method.
\newblock {\em SIAM J. Sci. Comput.}, 36(1):A40--A62, 2014.

\bibitem{ZLLT13}
F.H. Zeng, C.P. Li, F.W. Liu, and I.~Turner.
\newblock The use of finite difference/element approaches for solving the
  time-fractional subdiffusion equation.
\newblock {\em SIAM J. Sci. Comput.}, 35(6):A2976--A3000, 2013.

\bibitem{ZhWX13}
X.D. Zhao, L.L. Wang, and Z.Q. Xie.
\newblock Sharp error bounds for {J}acobi expansions and {G}egenbauer-{G}auss
  quadrature of analytic functions.
\newblock {\em SIAM J. Numer. Anal.}, 51(3):1443--1469, 2013.

\end{thebibliography}
\end{document}